\numberwithin{equation}{section}
\newtheorem{theorem}{Theorem}[section]
\newtheorem{proposition}[theorem]{Proposition}
\newtheorem{definition}[theorem]{Definition}
\newtheorem{corollary}[theorem]{Corollary}
\newtheorem{lemma}[theorem]{Lemma}
\theoremstyle{remark}
\newtheorem{remark}[theorem]{Remark}
\newtheorem{example}[theorem]{\bf Example}
\newcommand{\R}{\mathbb{R}}
\newcommand{\C}{\mathbb{C}}
\newcommand{\D}{\mathbb{D}}
\newcommand{\T}{\mathbb{T}}
\newcommand{\dd}{\mathrm{d}}
\newcommand{\zl}{(z,\bar{z},\lambda)}
\newcommand{\violet}{\textcolor{violet}}
\newcommand{\Str}{\mathbb{S}}
\begin{document}
\title[Harmonic maps of finite uniton number via normalized extended frames]{\bf{Harmonic maps of finite uniton number into inner symmetric spaces via  based normalized extended frames}}
\author{Josef F. Dorfmeister, Peng Wang}
%\date{}
\maketitle

\begin{abstract}
In this paper, we develop a loop group description of harmonic maps $\mathcal{F}: M \rightarrow G/K$ of
finite uniton number,  from a Riemann surface $M,$ compact or non-compact,  into  inner symmetric spaces of
 compact or non-compact type.
As a  main result we show that  the theory of \cite{BuGu}, largely based on Bruhat cells, can be transformed into the DPW theory which is mainly based on Birkhoff cells. Moreover, it turns out that the potentials constructed in  \cite{BuGu}, mainly see section 5,  can be used to carry out the DPW procedure which uses essentially the  fixed initial condition $e$ at a fixed base point $z_0$. This extends work of Uhlenbeck, Segal, and Burstall-Guest to non-compact inner symmetric spaces as target spaces (as a consequence of a ``Duality Theorem"). It  also permits to say that there is a 1-1-relation between finite uniton number harmonic maps and normalized potentials of a very specific and very controllable type.
In particular, we prove that every harmonic map of finite uniton type from any  (compact or non-compact) Riemann surface into any  {compact or non-compact} inner symmetric space has a normalized potential taking values in some nilpotent Lie subalgebra, as well as a normalized frame with initial condition identity. This provides a straightforward way to construct all such harmonic maps. We also illustrate the above results exclusively by Willmore surfaces, since this  problem is motivated by the study of Willmore two--spheres in spheres.\vspace{5mm}
\end{abstract}

{\bf Keywords:}  harmonic maps of finite uniton type; non-compact inner symmetric spaces; normalized potential; Willmore surfaces.\\

MSC(2010): 58E20; 53C43;  53A30; 53C35

%\tableofcontents
\section{Introduction}

 Harmonic maps from Riemann surfaces into symmetric spaces arise naturally in geometry and mathematical physics and hence became important objects in several mathematical fields, including  the study of  minimal surfaces, CMC surfaces, Willmore surfaces and related integrable systems. For harmonic maps into compact symmetric spaces or compact Lie groups, one of the most foundational and important papers is the  {description} of all harmonic two spheres into $U(n)$ by Uhlenbeck \cite{Uh}. It was shown that harmonic two-spheres satisfy a very  restrictive condition. Uhlenbeck coined the expression  `` finite uniton number'' for this property. Since the uniton number is an integer, one also obtains this way a subdivision of harmonic maps into  $U(n)$.  Uhlenbeck's work was generalized in a very elegant way to harmonic two spheres in all compact semi-simple Lie groups  {and into all compact inner symmetric spaces} by Burstall and Guest in \cite{BuGu}. Using  Morse theory for loop groups in the spirit of Segal's work, they showed that  {harmonic maps of finite uniton number} in compact Lie groups can be related to  {meromorphic } maps into  (finite dimensional)  nilpotent  Lie algebras. They also provided a concrete method to find all such nilpotent Lie algebras. Finally, via the Cartan embedding, harmonic maps into compact inner symmetric spaces are considered as harmonic maps into Lie groups satisfying some algebraic ``twisting'' conditions. Therefore the theory of Burstall and Guest provides a description of  {harmonic maps of finite uniton number} into compact Lie groups and compact inner symmetric spaces, in a way which is not only theoretically satisfying, but can also be implemented well for concrete computations \cite{Gu2002}.

 While in the literature  primarily harmonic maps $\mathcal F: M\rightarrow \Str$ were considered, where $\Str$ is a compact symmetric space,  in the theory of Willmore surfaces in $S^n$, and many other surface classes, one has to deal with ``Gauss type maps" which are harmonic maps into non-compact symmetric spaces.
It is the general goal of this paper to generalize results of \cite{BuGu} to harmonic maps of finite uniton type into a non-compact inner symmetric space. In particular, we want to describe simple potentials (in the sense of  {the  DPW method}) which generate such surfaces. For a compact inner symmetric target space this task has been carried out fairly explicitly in \cite{BuGu}, see subsections \ref{f.u.alaBuGu} and \ref{BuGu<->DPW}  below for a description in our notation.
For the case of a non-compact symmetric target space no such description is known yet. Thus one looks for an approach which permits to apply the work of \cite{BuGu} in such a way that one can also find simple potentials for the case of a  non-compact inner symmetric target space of a harmonic map.
  In \cite{DoWa13}, when dealing with harmonic maps into $SO^+(1,n+3)/SO^+(1,3)\times SO(n)$, the authors found a simple way to relate harmonic maps into a  non-compact inner symmetric space $G/K$  to harmonic maps into the  {dual compact} inner symmetric space $U/(U\cap K^{\C})$ dual to $G/K$.
  These two harmonic maps have a simple, but very important  relationship:  they share the  {same} meromorphic extended  framing and the normalized potential (see Theorem 1.1 of \cite{DoWa13} and Theorem \ref{thm-noncompact} in this paper). Here the normalized  extended framing and the normalized potential are meromorphic  data related to a harmonic map in terms of the language of the DPW method \cite{DPW}, which is a generalized Weierstrass type representation for harmonic maps into symmetric spaces.

  This paper is a fairly direct continuation of the paper \cite{DoWa-AT}.
    We will therefore use the same notation and definitions and assumptions.
  While we collect basic notation and results in the appendix, we would like to suggest to the reader to take a look
  at \cite{DoWa-AT} for an enhanced description of the background of this paper.

   Interpreting the work of Burstall and Guest, one will see that for harmonic maps of finite uniton type into a compact symmetric space, their work considers normalized potentials which take values  in some (fixed) nilpotent Lie subalgebra (of the originally given finite dimensional complex Lie algebra) and their extended meromorphic frames take values in the  loop group of the corresponding unipotent Lie subgroup.   Therefore, for each fixed value of the loop parameter these meromorphic extended frames
take values in  the  finite dimensional unipotent Lie group mentioned   above
(see  Theorem \ref{thm-finite-uniton2}  {in this paper}, or  Theorem 1.11 \cite{Gu2002}).
 It thus turns out that by combining the dualization procedure of  \cite{DoWa13} with the  {grouping by the uniton number} of finite uniton number harmonic maps of \cite{BuGu}, one is able to characterize all harmonic maps of finite uniton type  into non-compact inner symmetric spaces by characterizing all the normalized extended  frames and the normalized potentials of harmonic maps of finite uniton type into compact inner symmetric spaces, which, according to the theory of Burstall and Guest, can be
  {described precisely.} Simply speaking, the case of a harmonic map into a non-compact inner symmetric space $G/K$ comes exactly from the case of a harmonic map into the compact dual inner symmetric space $U/(U\cap K^{\C})$ by choosing the same normalized potential for both harmonic maps, but using the two different real forms $G$ and $U$ of $G^\C$ for the loop group construction of the corresponding harmonic maps  {(see Theorem \ref{thm-finite-uniton-n-com})}.

From a technical point of view  it is important to observe (as pointed out above already) that in \cite{BuGu} the construction of harmonic maps uses  ``extended solutions", while the loop group method  uses extended frames. It is therefore a priori difficult to relate these two construction schemes to each other. For the convenience of the reader and to fix notation we start a comparison of these two methods by recalling the relationship
between the extended solutions and the extended frames associated to a  harmonic map into a symmetric space. Then we introduce  the main results of Burstall and Guest on harmonic maps of finite uniton type \cite{BuGu}, as well as a description of their work in terms of {\em normalized potentials}, some of which has appeared in \cite{BuGu} and \cite{Gu2002}. Applying the duality theorem \cite{DoWa13}, we obtain the Burstall-Guest theory for the cases of non-compact inner symmetric spaces.
Both theories occurring in this paper consider group valued (actually ``matrix valued") function systems satisfying certain (partial) differential equations in the space variable with dependence on some ``loop parameter".
The ``extended solution approach" only fixes the solutions for two values of the loop parameter, while the ``extended frame approach = DPW method" fixes values of these frames for each loop parameter at a fixed basepoint in the domain of definition. As a simple consequence, the DPW method works with unique solutions and essentially bijective relations between potentials and harmonic maps. For simplicity we frequently say : the frame $F$ has initial condition $F(z_0, \bar z_0,\lambda)$, when we should spell out more explicitly : the initial condition of the frame $F(z, \bar z,\lambda)$ at the base point $z_0$ is $F(z_0, \bar z_0,\lambda)$.

 Finally, here comes the problem of initial conditions in the study of harmonic maps of  finite uniton type:  This turns out be crucial in the cases of non-compact inner symmetric spaces, although it is not a big problem in the compact cases. It comes from the fact that the Iwasawa decomposition for compact loop groups  is global while the Iwasawa decomposition for non-compact ones  is local (See for example
 \cite{B-R-S,oWa12,DoWa13}). Also note that the freedom of initial conditions  also is equivalent to the freedom of special dressing actions. In this sense, a fixed initial condition will simplify the classification of harmonic maps of finite uniton type further, which makes it more simple to derive geometric properties of harmonic maps via normalized potentials. A standard example is the description of minimal surfaces in $\R^n$ via potentials in \cite{Wang-2}.  In Theorem \ref{thm-finite-uniton-in} we can show that the initial condition of the meromorphic extended  frame can be set without loss of generality to be identity.

Our main motivation for the study of such harmonic maps is to provide the background for a detailed study of a wide variety of different types  of Willmore surfaces in $S^n$, surfaces of compact or non-compact type. As an application of this paper a rough classification of Willmore two-spheres  (whose conformal Gauss maps  take values in  the non-compact symmetric space $SO^+(1,n+3)/SO(1,3)\times SO(n)$) has been worked out in \cite{Wang-1}. For the convenience of the reader we  include the main result of \cite{Wang-1} by presenting its coarse classification of Willmore two-spheres in $S^{2n}$, in terms of the normalized potentials of their conformal Gauss maps. Moreover, we also present a new Willmore two-sphere constructed by using \cite{DoWa11,DoWa12} and the results of this paper. This example also responds to an open problem posed by  Ejiri in 1988 \cite{Ejiri1988}. \vspace{2mm }

This paper is organized as follows. In Section 2 we first introduce the main results of Burstall and Guest on harmonic maps of finite uniton type \cite{BuGu}, as well as a description of their work in terms of {\em normalized potentials}, most of which have appeared in \cite{BuGu} and \cite{Gu2002}. Then we prove that for harmonic maps of finite uniton numbers into compact inner symmetric spaces, the initial condition of the extended frame and the extended meromorphic frame can be identity at some chosen base point in $M$.
 Finally in Section 5, we first apply the above result to get the same description of harmonic maps of finite uniton type into non-compact inner symmetric spaces. As an illustration, an outline of applications to the study of Willmore surfaces is listed. We end the paper with three appendixes for readers' convenience:  Section 4 for  the basic results of the loop group theory for harmonic maps; Section 5, for a quick description of harmonic maps of finite uniton type; Section 6 for the duality theorem of \cite{DoWa13}.\\

{\bf Notation:} Following  \cite{BuGu,Gu2002}, in this paper,  $G$ is assumed to be a connected, semi-simple real Lie group with trivial center, and  $\mathfrak{g}$  denotes its Lie algebra and $G^{\mathbb{C}}$ its complexification. And $G/K$ denotes an inner symmetric space. We will also assume w.l.g. that $G^\C$ is a semisimple simply-connected matrix Lie group and $G$ a subgroup of $G^\C$ \cite{Hoch}, with $e$ as the identity.
%%%%%%%%%%%%%%%%%%%%%%

%%%%%%%%%%%%%%%%%%%%%%%

%MMMMMMMMMMMMMMMMMMMMMMMMMMMMMMMMMMMMM

\section{ Revisiting of {The work of Burstall and Guest} in the  DPW Formalism}

In this section, we will revisit the important work of
  Burstall and Guest  \cite{BuGu} on harmonic maps of finite uniton number into inner symmetric spaces, in view of the DPW method. The prime aim is to take a further consideration of the initial conditions which is not discussed in \cite{BuGu}. We show that it suffices to get all harmonic maps of finite uniton numbers by setting initial condition $e$ in the DPW version of Burstall-Guest's work. This plays an importance role when applying Burstall-Guest's work for harmonic maps of finite uniton numbers  into non-compact inner symmetric spaces because the initial condition $e$ in the non-compact case will make the characterization of harmonic maps of finite uniton numbers  into non-compact inner symmetric spaces much more simple.

%%%%%%%%%%%%%%%%%%%%%%%%%
\subsection{Review of extended solutions}
 In this subsection, we  compare/unify the notation used in \cite{Uh}, \cite{BuGu} and \cite{DPW}. For a harmonic map  $\mathbb{F}:\D\rightarrow G$, in \cite{Uh},  \cite{BuGu} ``extended solutions" are considered, while in \cite{DPW} always ``extended frames'' are used.  In this subsection we will explain the relation between these methods, which already appears in \cite{DoWa-AT}. Here we include primarily  the details which we will need to use.
%%%%%%%%%%%%%%%%%%%%%%%%%%%%%
%%%%%%%%%%%%%%%%%%%%%%%%%%%%%%%%%
\subsubsection{Inner symmetric spaces and the modified Cartan embedding}

Consider the inner compact symmetric space  $G/\hat{K}$  with inner involution $\sigma$, given by $\sigma(g) = h g h^{-1}$ and with $\hat{K} = Fix^\sigma (G)$. Note that $h^2 \in Center (G) = \{e\}. $  Then with $R_h(g)= gh$ we consider
the map
\begin{equation}
    \label{eq-Cartan}
\begin{tikzcd}[column sep=6mm,row sep=4mm]
G/\hat{K}  \ar{r}{\mathfrak{C}}    &   G     \ar{r}{R_h} & G \\
g   \ar{r}{}  &   g \sigma(g)^{-1} = ghg^{-1} h^{-1}  \ar{r}{} & \mathfrak{C}_h:=g h g^{-1}.
\end{tikzcd}
\end{equation}
 In this way $G/\hat K$ is an isometric, totally geodesic submanifold   of $G$ \cite{BuGu}, and ${\mathfrak{C}_h}$ will be called the ``modified Cartan embedding". Note that for outer symmetric spaces the above Cartan embedding does not apply directly.

%%%%%%%%%%%%%%%%%%%%%%
\subsubsection{Extended frames for harmonic maps  $\mathcal{F} : M \rightarrow G/\hat{K}$ and modified  harmonic maps
$\mathfrak{C}_h\circ \mathcal{F}$}
Using the notation introduced above, consider a harmonic map  $\mathcal{F}: M \rightarrow G/\hat{K}$.

By $F : \tilde{M} \rightarrow  \Lambda G_{\sigma}$ we  denote the extended frame of $\mathcal{F}$ which is normalized to $F(z_0, \bar{z}_0, \lambda) = e$ at some base point $z = z_0$ for all $\lambda \in S^1$.
The extended frame of a harmonic map $\mathcal{F}$ actually is for each fixed $\lambda$ the frame of the corresponding immersion $\mathcal{F}_\lambda$ of the
associated family of $\mathcal{F}$. Obviously, the twisting condition in our case means
\[{\sigma(\gamma)(\lambda) = h \gamma(-\lambda)h^{-1} = \gamma(\lambda)}\hbox{
for all
$\gamma \in \Lambda G_{\sigma}$.}\]

Next we consider the composition of the family of harmonic maps  $\mathcal{F}_\lambda$ with the
modified Cartan embedding $\mathfrak{C}_h$.  In our setting, since
$\mathcal{F}_\lambda =F( z, \bar{z}, \lambda)\mod \hat{K}$,   this yields the
$\lambda-$dependent harmonic map $\mathfrak{F}^h_\lambda$ given by
 \begin{equation} \label{harmonicrelation}
  \mathfrak{F}^h_{\lambda}=
 F( z, \bar{z}, \lambda) h F( z, \bar{z}, \lambda))^{-1}.
 \end{equation}
Note that $\mathfrak{F}^h_\lambda $  is a $\lambda-$dependent harmonic map satisfying
$(\mathfrak{F}^h_\lambda )^2=e, $ where the square denotes the product in the group $G$.
Moreover, we also  have     $\mathfrak{F}^h_\lambda (z_0, \bar{z}_0,\lambda)  = h.$ Harmonic maps into $G$ satisfying these two properties will be called `modified harmonic maps".

\begin{theorem}
We retain the notation and the assumptions made just above. In particular, $z_0$ is a fixed basepoint in the Riemann surface $M$ Then
there is a bijection between harmonic maps  $\mathcal{F} : M \rightarrow G/\hat{K}$  satisfying
$\mathcal{F}(z_0,\bar{z}_0,\lambda) = e\hat{K}$ and modified harmonic maps, i.e. harmonic maps
  $\mathfrak{F}^h_{\lambda}: M \rightarrow G_h = \{ ghg^{-1}; g \in G \}$ satisfying
   $(\mathfrak{F}^h_{\lambda})^2 = e$ and  $\mathfrak{F}^h_{\lambda}(z_0, \bar{z}_0, \lambda) = h$ for all   $\lambda \in \C^*$.
  This relation is given by composition with the modified  Cartan embedding (and its inverse respectively).
  \end{theorem}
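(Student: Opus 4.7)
The forward direction is essentially already laid out in the paragraph preceding the theorem, so I would just organize it and verify the four listed properties. Given $\mathcal{F}: M \to G/\hat{K}$ normalized so that $\mathcal{F}(z_0,\bar{z}_0,\lambda) = e\hat{K}$, the associated extended frame $F$ is taken with $F(z_0,\bar{z}_0,\lambda) = e$; setting $\mathfrak{F}^h_\lambda := F h F^{-1}$ as in \eqref{harmonicrelation}, I would read off: (i) $\mathfrak{F}^h_\lambda$ takes values in $G_h$ by the very definition of $\mathfrak{C}_h$; (ii) $(\mathfrak{F}^h_\lambda)^2 = Fh^2F^{-1} = e$ since $h^2 \in \mathrm{Center}(G) = \{e\}$; (iii) $\mathfrak{F}^h_\lambda(z_0,\bar{z}_0,\lambda) = ehe^{-1} = h$; and (iv) harmonicity of $\mathfrak{F}^h_\lambda$ follows because $\mathfrak{C}_h$ is an isometric totally geodesic embedding.

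The actual content lies in the inverse direction. The plan is to argue that $\mathfrak{C}_h$ is a diffeomorphism $G/\hat{K} \xrightarrow{\sim} G_h$, so that $\mathfrak{C}_h^{-1}$ makes sense as a smooth map. The key algebraic point is that the conjugation map $G \to G_h$, $g \mapsto ghg^{-1}$, is surjective by the definition of $G_h$ and has isotropy at $h$ equal to the centralizer $Z_G(h) = \{g \in G : ghg^{-1} = h\}$. Since $\sigma(g) = hgh^{-1}$ and $\hat{K} = \mathrm{Fix}^\sigma(G)$, one gets $\hat{K} = Z_G(h)$, so $\mathfrak{C}_h$ descends to a smooth bijection from $G/\hat{K}$ onto $G_h$ with smooth inverse. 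Given any modified harmonic map $\mathfrak{F}^h_\lambda : M \to G_h$ with $\mathfrak{F}^h_\lambda(z_0,\bar{z}_0,\lambda) = h$, I would then set $\mathcal{F}_\lambda := \mathfrak{C}_h^{-1} \circ \mathfrak{F}^h_\lambda$. Harmonicity is preserved because $\mathfrak{C}_h$ is an isometric totally geodesic embedding; the normalization $\mathcal{F}_\lambda(z_0,\bar{z}_0,\lambda) = \mathfrak{C}_h^{-1}(h) = e\hat{K}$ is immediate.

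That the two constructions are mutually inverse then follows automatically from $\mathfrak{C}_h \circ \mathfrak{C}_h^{-1} = \mathrm{id}_{G_h}$ and $\mathfrak{C}_h^{-1} \circ \mathfrak{C}_h = \mathrm{id}_{G/\hat{K}}$. I do not expect a genuine obstacle here: the only subtle point to pin down is the identification $\hat{K} = Z_G(h)$, which follows directly from the definition of the inner involution, and after that the $\lambda$-dependence is trivial because $\mathfrak{C}_h$ is independent of $\lambda$. I would also briefly note that the condition $(\mathfrak{F}^h_\lambda)^2 = e$ is in fact automatic once $\mathfrak{F}^h_\lambda$ is known to take values in $G_h$ (since $(ghg^{-1})^2 = gh^2g^{-1} = e$), so the two conditions stated in the theorem are compatible rather than overdetermined.
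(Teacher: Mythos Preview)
Your proposal is correct and follows essentially the same approach as the paper: the forward direction is read off from the preceding paragraph, and the inverse direction is obtained by composing with $\mathfrak{C}_h^{-1}$, using that $\mathfrak{C}_h$ is an isometric diffeomorphism onto its image. Your added justification that $\hat{K} = Z_G(h)$ (whence $\mathfrak{C}_h$ is a diffeomorphism onto $G_h$) and your remark that $(\mathfrak{F}^h_\lambda)^2 = e$ is automatic on $G_h$ are minor elaborations that the paper simply takes for granted.
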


\begin{proof}
We have shown ``$\Longrightarrow$" above. Assume now we have  a harmonic map
$\mathbb{F}: M \rightarrow  {{G}_h}$ satisfying
 $\mathbb{F}^2 = e$ and  $\mathbb{F}(z_0, \bar{z}_0, \lambda) = h$ for all  $\lambda \in \C^*$.
   Since   $\mathfrak{C}_h$ is an isometric  diffeomorphism   onto its image, we consider
    (for all $\lambda \in \C^*$) the $\lambda$-dependent harmonic map
    $\mathcal{F}_\lambda = (\mathfrak{C}_h)^{-1} \circ \mathbb{F}_\lambda : M \rightarrow G/\hat{K}$.
    Clearly, then we have  $\mathcal{F}(z_0, \bar z_0, \lambda) = e\hat{K}$.  Moreover, by what was shown above, we now infer
    $\mathbb{F}(z, \bar z, \lambda) = \mathfrak{F}^h_{\lambda} (z, \bar z, \lambda) =
    F( z, \bar{z}, \lambda) h F( z, \bar{z}, \lambda))^{-1}$, where $F$ denotes the extended frame of $\mathcal{F}$.
    Since     $\mathcal{F} (z_0, \bar{z}_0, \lambda) = e$,
    $h =   F( z_0 \bar{z}_0, \lambda) h F( z_0, \bar{z}_0, \lambda))^{-1}= \mathfrak{F}^h_{\lambda} ( z_0, \bar{z}_0, \lambda)$ holds.
    \end{proof}

%%%%%%%%%%%%%%%%%%%%%%%%%%%%%

\subsubsection{Extended solutions for harmonic maps into Lie groups}

We  have shown, among other things, in the  subsubsections above  that it is essentially sufficient
for our purposes to  consider  harmonic maps into Lie groups $G$.
In this subsubsection we consider harmonic maps into Lie groups following the approach of
\cite{Uh} and \cite{BuGu}.
 We start by relating the different loop parameters
used in \cite{Uh} , \cite{BuGu} and \cite{DPW} respectively to each other.

 To begin with, we  recall the definition of {\em extended solutions}
following Uhlenbeck \cite{Uh},\cite{BuGu}. Let  $\D \subset \C$ be a
simply-connected domain and $\mathbb{F}: \D\ \rightarrow G$  a harmonic map.
Set
\[\mathbb{A}=\frac{1}{2} \mathbb{F}^{-1}\mathrm{d} \mathbb{F} =\mathbb{A}^{(1,0)}+\mathbb{A}^{(0,1)}.\]
Consider {for $\tilde{\lambda} \in \C^*$} the equations
\begin{equation}\label{eq-Uh1}
\left\{
\begin{split}
\partial_z \Phi \mathrm{d}z&=(1-\tilde{\lambda}^{-1})\Phi\mathbb{A}^{(1,0)},\\
\partial_{\bar{z}} \Phi \mathrm{d} \bar{z}&=(1-\tilde{\lambda})\Phi\mathbb{A}^{(0,1)}.\\
\end{split}
\right.
\end{equation}
with $\Phi: \D \rightarrow \Omega G$, where the corresponding loop parameter
 is denoted here by $\tilde{\lambda}.$
 Then, by Theorem 2.2 of \cite{Uh} (Theorem 1.1 of \cite{BuGu}),
there exists a solution $\Phi(z,\bar{z},\tilde{\lambda})$ to the above
equations such that
\begin{equation}
\Phi(z,\bar{z},\tilde{\lambda}=1)=e,~\ \mbox{and}
~ \Phi(z,\bar{z},\tilde{\lambda}=-1)= \mathbb{F} (z, \bar z)
\end{equation}
hold. This solution is unique up to multiplication by some
$\gamma \in \Omega G = \{ g \in \Lambda G^{\C}_\sigma |,  g(\lambda = 1) = I \}$ satisfying $\gamma (-1) = e$. Such solutions $\Phi$ are said to be {\bf extended solutions}.

   If we also have $\mathbb{F}(z_0)=e$ , we can also choose $\Phi(z_0,\bar{z}_0, \tilde\lambda)=e$.
Although the assumption  $\mathbb{F}(z_0)=e$ was
used in \cite{Uh}, we will, as in \cite{BuGu}, not assume  this, since it is not satisfied
 in a large part of this section. The following statement  is straightforward.

\begin{lemma}\label{lemma-es} \cite{BuGu}  Let $\Phi(z,\bar{z}, \tilde{\lambda})$ be
 {\em an
extended solution} of the harmonic map  $\mathbb{F}:\D\rightarrow G$. Let $\gamma\in \Omega G$.
Then $\gamma(\lambda)\Phi(z,\bar{z},\tilde{\lambda})$ is an extended solution of the
harmonic map $\gamma(-1)\mathbb{F}(z,\bar{z})$.
\end{lemma}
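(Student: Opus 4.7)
The plan is to verify directly that $\tilde{\Phi}(z,\bar z,\tilde\lambda):=\gamma(\tilde\lambda)\Phi(z,\bar z,\tilde\lambda)$ satisfies the defining ODE system \eqref{eq-Uh1} for the shifted harmonic map $\tilde{\mathbb{F}}(z,\bar z):=\gamma(-1)\mathbb{F}(z,\bar z)$, and then check the two normalizations at $\tilde\lambda=1$ and $\tilde\lambda=-1$. The key preliminary observation is that $\gamma(\tilde\lambda)$ depends only on the loop parameter and is independent of $(z,\bar z)$, so partial derivatives pass through it.

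First I would compute the Maurer--Cartan form of $\tilde{\mathbb{F}}$. Since $\gamma(-1)\in G$ is a constant, one has $\tilde{\mathbb{F}}^{-1}\mathrm{d}\tilde{\mathbb{F}}=\mathbb{F}^{-1}\mathrm{d}\mathbb{F}$, so the associated form is $\tilde{\mathbb{A}}=\mathbb{A}$, with identical $(1,0)$ and $(0,1)$ parts. In particular, $\tilde{\mathbb{F}}$ is again harmonic, being the image of $\mathbb{F}$ under the isometry of left translation by $\gamma(-1)$. Therefore the ODE system defining an extended solution of $\tilde{\mathbb{F}}$ is literally the same system \eqref{eq-Uh1} satisfied by $\Phi$.

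Next I would substitute $\tilde{\Phi}=\gamma(\tilde\lambda)\Phi$ into \eqref{eq-Uh1}. Applying $\partial_z$ and $\partial_{\bar z}$, the factor $\gamma(\tilde\lambda)$ is constant in $(z,\bar z)$, so
\begin{align*}
\partial_z\tilde{\Phi}\,\mathrm{d}z &=\gamma(\tilde\lambda)\,\partial_z\Phi\,\mathrm{d}z=\gamma(\tilde\lambda)(1-\tilde\lambda^{-1})\Phi\,\mathbb{A}^{(1,0)}=(1-\tilde\lambda^{-1})\tilde{\Phi}\,\tilde{\mathbb{A}}^{(1,0)},\\
\partial_{\bar z}\tilde{\Phi}\,\mathrm{d}\bar z &=\gamma(\tilde\lambda)\,\partial_{\bar z}\Phi\,\mathrm{d}\bar z=(1-\tilde\lambda)\tilde{\Phi}\,\tilde{\mathbb{A}}^{(0,1)},
\end{align*}
so that $\tilde{\Phi}$ solves the correct system. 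Since $\Phi(z,\bar z,\cdot)\in\Omega G$ and $\gamma\in\Omega G$, the product $\tilde{\Phi}(z,\bar z,\cdot)$ still lies in $\Omega G$ as the latter is a group.

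Finally I would verify the two distinguished evaluations. At $\tilde\lambda=1$ both $\gamma(1)=e$ and $\Phi(z,\bar z,1)=e$, so $\tilde{\Phi}(z,\bar z,1)=e$. At $\tilde\lambda=-1$,
\[
\tilde{\Phi}(z,\bar z,-1)=\gamma(-1)\Phi(z,\bar z,-1)=\gamma(-1)\mathbb{F}(z,\bar z)=\tilde{\mathbb{F}}(z,\bar z),
\]
which matches the extended-solution normalization. Hence $\tilde{\Phi}$ is an extended solution of $\tilde{\mathbb{F}}=\gamma(-1)\mathbb{F}$. There is no real obstacle here; the statement is a routine consequence of the loop-parameter independence of $\gamma$ and the invariance of the Maurer--Cartan form under constant left translation, and the proof is essentially a substitution check.
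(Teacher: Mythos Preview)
Your proof is correct and is precisely the direct verification the paper has in mind; the paper itself does not spell out a proof, merely noting that the statement ``is straightforward'' and citing \cite{BuGu}. Your substitution check using the $(z,\bar z)$-independence of $\gamma$ and the invariance of $\mathbb{A}$ under constant left translation is exactly the intended argument.
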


\begin{remark}
	Next we show how the ``DPW approach" without the basepoint assumption  \cite{DPW} naturally leads to
	Uhlenbeck's extended solutions \cite{Uh}.
	We follow Section 9 of \cite{Do-Es} and consider the Lie group $G$  as the outer symmetric space
	$G = (G \times G)/ \Delta,$
where the defining symmetry $\tilde{\sigma}$ is given by
	$$\tilde{\sigma} (a,b) = ( b,a)$$ and we have  $$\Delta  = \{ (a,b) \in  G \times G; a=b \}.$$
	
	For the purposes of our loop group method it is necessary to consider the $G \times G-$loop group
	$\Lambda(G \times G)$
	twisted by $\tilde{\sigma}$. We thus consider the automorphism of the loop group
	$\Lambda(G \times G) = \Lambda G \times \Lambda G$ given by
	$$\hat{\tilde{\sigma}} ((a,b)) (\lambda) = \tilde{\sigma}( a(\lambda), b(\lambda)) =
	(b(\lambda), a(\lambda)).$$

It is straightforward to verify that the twisted loop group $\Lambda(G \times G)_{\tilde{\sigma}}$
	is given by
	$$\Lambda(G \times G)_{\tilde{\sigma}} = \{ (g(-\lambda), g(\lambda)) ; g(\lambda) \in \Lambda G \}
	 \cong \Lambda G. $$

	Let's consider now a harmonic map $\mathbb{F}: M \rightarrow G.$
	Then the map
	$\mathfrak{F}:M \rightarrow G \times G$, given by
	 $\mathfrak{F}(z, \bar{z}) = \left(\mathbb F(z,\bar{z}),e\right)$,  is a global frame of $\mathbb{F}$.
	
	Following \cite{DPW} one needs to decompose the Maurer-Cartan form
	$\mathfrak{A} = \mathfrak{F}^{-1} \mathrm{d} \mathfrak{F}$ of $\mathfrak{F}$  into the eigenspaces of
	$\tilde\sigma$ and to introduce the loop parameter $\lambda$.
	One obtains (see \cite{Do-Es}, formula $(68)$):
	\begin{equation}
	\mathfrak{A}_{\lambda} = \left( \left(1+\lambda^{-1}\right) \mathbb A^{(1,0)} +  \left(1+\lambda\right)\mathbb A^{(0,1)} ,\ \left(1-\lambda^{-1}\right) \mathbb A^{(1,0)}  +  \left(1-\lambda\right) \mathbb A^{(0,1)} \right).
	\end{equation}
\end{remark}
	\begin{theorem}
		Let  $G$  be a connected, compact or non-compact, semi-simple real Lie group with trivial center.
		Let  $\mathbb{F}:\D\rightarrow G$ be a harmonic map.  Then, when  representing $G$ as the
		symmetric space  $G = (G \times G)/\Delta,$
		any extended  frame $\mathfrak{F} : \D \rightarrow \Lambda(G \times G)_{\tilde\sigma}$
		of $\mathbb{F}$ satisfying
		$\mathfrak{F}(z, \bar{z}, \lambda = 1) = \left(\mathbb F(z,\bar{z}),e\right)$
		is given  by a pair of functions,
		\[\mathfrak{F}(z, \bar{z},\lambda) =  ( \Phi (z, \bar{z},-\lambda), \Phi (z, \bar{z},\lambda)),\]
		where the  matrix function $ \Phi (z, \bar{z},\lambda)$ is an  extended solution  for $\mathbb{F}$ in the sense of Uhlenbeck \cite{Uh} as introduced above.
	\end{theorem}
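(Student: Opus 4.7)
The plan is to read off the explicit shape of any such $\mathfrak{F}$ from the twisting condition, then verify the extended-solution equations by comparing Maurer-Cartan forms with the expression $\mathfrak{A}_\lambda$ displayed in the preceding remark, and finally to match the normalization at $\lambda=1$ with Uhlenbeck's normalization.

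First I would invoke the computation of the twisted loop group recalled just above the theorem, namely
\[
\Lambda(G\times G)_{\tilde\sigma} = \{\,(g(-\lambda), g(\lambda)) : g(\lambda)\in \Lambda G\,\}.
\]
Consequently any $\mathfrak{F}: \D \rightarrow \Lambda(G \times G)_{\tilde\sigma}$ is automatically of the form $\mathfrak{F}(z,\bar z,\lambda)=(\Phi(z,\bar z,-\lambda), \Phi(z,\bar z,\lambda))$ for a uniquely determined map $\Phi: \D \to \Lambda G$. So the structural statement of the theorem is essentially a tautology once the twisting is properly encoded; no work is required for the ``shape'' part.

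Next I would check that $\Phi$ satisfies the Uhlenbeck equations (\ref{eq-Uh1}) with $\tilde\lambda=\lambda$. Computing the Maurer-Cartan form componentwise,
\[
\mathfrak{F}^{-1}\dd\mathfrak{F} = \bigl( \Phi(-\lambda)^{-1}\dd\Phi(-\lambda),\ \Phi(\lambda)^{-1}\dd\Phi(\lambda)\bigr),
\]
and comparing the second component with the formula for $\mathfrak{A}_\lambda$ stated in the remark above, one reads off
\[
\Phi^{-1}\dd\Phi = (1-\lambda^{-1})\mathbb{A}^{(1,0)} + (1-\lambda)\mathbb{A}^{(0,1)},
\]
which is precisely (\ref{eq-Uh1}). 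The first component is automatically consistent since substituting $\lambda\mapsto-\lambda$ in the above relation gives exactly $(1+\lambda^{-1})\mathbb{A}^{(1,0)} + (1+\lambda)\mathbb{A}^{(0,1)}$, matching the first slot of $\mathfrak{A}_\lambda$. Thus no independent PDE arises from the first component.

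Finally I would verify the normalizations. Evaluating the second component at $\lambda=1$ gives $\Phi(z,\bar z,1)=e$, and evaluating the first component at $\lambda=1$ gives $\Phi(z,\bar z,-1)=\mathbb{F}(z,\bar z)$; together these are exactly Uhlenbeck's defining conditions for an extended solution. The main potential obstacle is really just bookkeeping: making sure that $\Phi$ has values in the \emph{real} loop group $\Lambda G$ rather than its complexification, so that $\mathfrak{F}$ actually lies in $\Lambda(G\times G)_{\tilde\sigma}$ and not in a larger twisted complex loop group. This is however built into the description of the twisted loop group recalled above, so the argument goes through identically for $G$ compact or non-compact, which is precisely the generality needed for the applications in later sections.
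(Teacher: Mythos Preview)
Your proof is correct and follows essentially the same route as the paper's. The only organizational difference is that you invoke the description of $\Lambda(G\times G)_{\tilde\sigma}$ at the outset to force the shape $(\Phi(-\lambda),\Phi(\lambda))$ and then read off the Uhlenbeck equations and normalizations directly, whereas the paper first writes a general solution of $\mathfrak F^{-1}\dd\mathfrak F=\mathfrak A_\lambda$ in the form $(B(-\lambda)\Psi(-\lambda),B(\lambda)\Psi(\lambda))$ relative to a reference extended solution $\Psi$ and then uses the normalization at $\lambda=1$ to conclude $B(\pm1)=e$; both arguments hinge on the same two ingredients (the $\lambda\mapsto-\lambda$ symmetry encoded in the twisting and the evaluation at $\lambda=1$), and yours is a slightly more streamlined packaging of the same idea.
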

	
	\begin{proof}
		Since the two components of $\mathfrak{A} $ only differ by a minus sign in $\lambda$, any solution to
		the equation $\mathfrak{A}_\lambda  = \mathfrak{F}(z, \bar z, \lambda)^{-1} \mathrm{d} \mathfrak{F}(z, \bar z, \lambda)$
		is of the form $ \mathfrak{F}(z, \bar z, \lambda) =
		(B(-\lambda) \Psi(z, \bar z, - \lambda) , B(\lambda) \Psi(z, \bar z,  \lambda))$,
		where $\Psi$ solves the equations \eqref{eq-Uh1}. Moreover, we can assume w.l.g. that $\Psi$ satisfies the two conditions for extended solutions stated above for $\lambda = \pm1$.
		Now
		$\mathfrak{F}(z, \bar{z}, \lambda = 1) = \left(\mathbb F(z,\bar{z}),e\right)$ implies
		{$B(1)  = B(1) \Psi (z, \bar{z},\lambda= 1) =e$ and $B( -1)\Psi(z, \bar z, -1) =
		 B(-1) \mathbb{F}(z, \bar z) = \mathbb{F}(z, \bar z),$
		 whence $B(-1) = e$ follows.
		 Therefore, $\Phi (z, \bar z, \lambda)  = B(\lambda)  \Psi(z, \bar z, \lambda)$}
		yields the claim.
	\end{proof}

	Note that in this theorem no normalization is required. Moreover, the loop parameter used in \cite{Uh} is the same as the one used in \cite{DPW}. However, the matrix functions $B(\lambda)$ and  $\Phi (z, \bar z, \lambda)$
	are not uniquely determined which causes the DPW procedure to yield quite arbitrary potentials, not easily permitting any converse construction procedure.

%MMMMMMMMMMMMMMMMMMMMMMMMMMM
\subsubsection{Extended solutions and extended frames for harmonic maps into symmetric spaces} \label{414}

Consider as before a harmonic map  $\mathcal{F}: M \rightarrow G/\hat{K}$ into a symmetric space
with inner involution $\sigma$, given by $\sigma(g) = h g h^{-1}$ and with $\hat{K} = Fix^\sigma (G)$.
 As above we consider the modified harmonic map $\mathbb{F}: \rightarrow G$ given by
 $$\mathbb{F}(z, \bar z, \lambda) =   \mathfrak{F}^h_{\lambda} (z, \bar z, \lambda) =
 F( z, \bar{z}, \lambda) h F( z, \bar{z}, \lambda))^{-1}.$$
  For this $\lambda-$family of harmonic maps $\mathbb{F}_\lambda$ we compute
 \begin{equation*}
 \begin{split}
\mathbb{A}&=\frac{1}{2}\mathbb{F}_{\lambda}^{-1}\mathrm{d}\mathbb{F}_{\lambda} \\
\ &=\frac{1}{2}\left(F(z,\bar{z},\lambda)hF(z,\bar{z},\lambda)^{-1}\right)^{-1}\mathrm{d}\left(F(z,\bar{z},\lambda)hF(z,\bar{z},\lambda)^{-1}\right)\\
\ &=\frac{1}{2}F(z,\bar{z},\lambda) h^{-1}\alpha_{\lambda} h F(z,\bar{z},\lambda)^{-1}-\frac{1}{2}\mathrm{d}F(z,\bar{z},\lambda)F(z,\bar{z},\lambda)^{-1}\\
\ &=\frac{1}{2}F(z,\bar{z},\lambda)\left(\alpha_{-\lambda}-\alpha_{\lambda}\right)F(z,\bar{z},\lambda)^{-1}\\
\ &=-F(z,\bar{z},\lambda)\left(\lambda^{-1}\alpha_{\mathfrak{p}}'+\lambda\alpha_{\mathfrak{p}}''\right)F(z,\bar{z},\lambda)^{-1}.\\
\end{split}
\end{equation*}
Following Uhlenbeck's approach we need to introduce a new ``loop parameter" $\tilde{\lambda}$ now and consider Uhlenbeck's differential equation \eqref{eq-Uh1} for
$\Phi(z,\bar{z},\lambda,\tilde{\lambda})$ with conditions for $\tilde{\lambda} = \pm 1$:
\begin{equation}\label{eq-Uh2}
\left\{
\begin{split}
\partial_z \Phi \mathrm{d}z&=-\Phi(1-\tilde{\lambda}^{-1})\lambda^{-1}F(z,\bar{z},\lambda) \alpha_{\mathfrak{p}}' F(z,\bar{z},\lambda)^{-1}\\
\partial_{\bar{z}} \Phi\mathrm{d}\bar z&=-\Phi(1-\tilde{\lambda})\lambda F(z,\bar{z},\lambda) \alpha_{\mathfrak{p}}'' F(z,\bar{z},\lambda)^{-1}\\
\end{split}
\right.\end{equation}
From (\ref{eq-Uh2}) it is natural to consider the Maurer-Cartan form $\widetilde{\mathbb{A}} $ of $\Phi (z,\bar{z},\lambda,\tilde{\lambda})F(z,\bar{z},\lambda)$. One obtains:
\begin{equation}
\begin{split}
\widetilde{\mathbb{A}}&=(\Phi F)^{-1} \dd ( \Phi F)\\
&=F^{-1}\dd F+F^{-1}(\Phi^{-1}\dd  \Phi) F\\
&=\alpha_{\lambda}-F^{-1}\Phi^{-1}\left(\Phi(1-\tilde{\lambda}^{-1})\lambda^{-1}F  \alpha_{\mathfrak{p}}' F ^{-1}+\Phi(1-\tilde{\lambda})\lambda F \alpha_{\mathfrak{p}}'' F ^{-1}\right)F\\
&=\alpha_{\lambda}- \left( (1-\tilde{\lambda}^{-1})\lambda^{-1}  \alpha_{\mathfrak{p}}'  +(1-\tilde{\lambda})\lambda \alpha_{\mathfrak{p}}''\right)\\
&=\lambda^{-1}  \alpha_{\mathfrak{p}}'+\alpha_{\mathfrak k}+\lambda \alpha_{\mathfrak{p}}''- \left( (1-\tilde{\lambda}^{-1})\lambda^{-1}  \alpha_{\mathfrak{p}}'  +(1-\tilde{\lambda})\lambda \alpha_{\mathfrak{p}}''\right)\\
&=\tilde{\lambda}^{-1}\lambda^{-1}  \alpha_{\mathfrak{p}}'+\alpha_{\mathfrak k}+\tilde{\lambda}\lambda \alpha_{\mathfrak{p}}'' \\
&=\alpha_{\tilde{\lambda}\lambda}. \\
\end{split}
\end{equation}
From this we derive immediately the relation
\begin{equation}\label{eq-lawson}
F(z,\bar{z}, \lambda\tilde{\lambda})=A(\lambda, \tilde{\lambda}) \Phi (z,\bar{z},\lambda,\tilde{\lambda})F(z,\bar{z}, \lambda).
\end{equation}
Substituting here  $z = z_0$ we derive, in view of the normalization of $F$ at $z = z_0$:
\begin{equation}\label{eq-A}
A(\lambda, \tilde{\lambda}) = \Phi (z_0 ,{\bar{z}}_0,\lambda,\tilde{\lambda})^{-1}.
\end{equation}
In particular, setting $\lambda=1$ in \eqref{eq-lawson} we obtain
\begin{equation}
F(z,\bar{z}, \tilde{\lambda})=A(1, \tilde{\lambda}) \Phi (z,\bar{z},1,\tilde{\lambda})F(z,\bar{z},1).
\end{equation}
Setting $\tilde\lambda=-1$ in \eqref{eq-lawson} we obtain (by using the twisting condition for $F$):
\begin{equation}
A(\lambda, -1)=F(z,\bar{z},-\lambda)\left( \Phi (z,\bar{z},\lambda,-1)F(z,\bar{z},\lambda)\right)^{-1}=\violet{h}.
\end{equation}
Hence
\[\Phi (z, \bar{z},1,-1)={A(1,-1)^{-1} } F(z,\bar{z}, -1)F(z,\bar{z}, 1)^{-1}=F(z,\bar{z}, 1)hF(z,\bar{z}, 1)^{-1}=\mathbb{F}(z,\bar z,1).\]

In summary we obtain (by setting $\lambda = 1$ and replacing $\tilde{\lambda}$ by $\lambda$):
\begin{corollary}\label{cor-Phi-F}
The extended solution $\Phi$, and the $\sigma-$twisted  extended frame $F$ satisfy
\begin{equation}\label{eq-lawson2}
\Phi (z, \bar{z},1,\lambda) = A(1, \lambda)^{-1}F(z,\bar{z},  \lambda)F(z,\bar{z}, 1)^{-1}.
\end{equation}
In particular, $\Phi  (z, \bar{z},1,\lambda)$ is contained in the based loop group $\Omega G$. Moreover,
for $\lambda = -1$  we obtain the harmonic map $ \mathbb{F}(z,\bar z,1) =  \mathbb{F}(z,\bar z) $.
\end{corollary}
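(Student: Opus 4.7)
The plan is to read the identity \eqref{eq-lawson2} directly off of \eqref{eq-lawson}: substituting $\lambda = 1$ in \eqref{eq-lawson} and relabeling $\tilde\lambda$ as $\lambda$ gives $F(z,\bar z,\lambda) = A(1,\lambda)\,\Phi(z,\bar z,1,\lambda)\,F(z,\bar z,1)$, and multiplying on the left by $A(1,\lambda)^{-1}$ and on the right by $F(z,\bar z,1)^{-1}$ yields \eqref{eq-lawson2}. No further calculation is required, because the Maurer--Cartan computation just above \eqref{eq-lawson} already establishes the intertwining relation on which everything rests.

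Next I would verify that $\Phi(z,\bar z,1,\cdot)$ lies in the based loop group $\Omega G$. Reality (values in $G$, not merely $G^{\mathbb{C}}$) is built into the construction of Uhlenbeck's extended solutions, so the remaining point is the basepoint condition $\Phi(z,\bar z,1,\lambda=1) = e$. Setting $\lambda = 1$ in \eqref{eq-lawson2} collapses the $F$-factors and leaves $\Phi(z,\bar z,1,1) = A(1,1)^{-1}$. By \eqref{eq-A} we have $A(1,1) = \Phi(z_0,\bar z_0,1,1)^{-1}$, and the normalization $\Phi(\cdot,\cdot,\lambda,\tilde\lambda = 1) = e$ built into the notion of extended solution gives $A(1,1) = e$, as needed.

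Finally, to show $\Phi(z,\bar z,1,-1) = \mathbb{F}(z,\bar z,1)$, I would specialize \eqref{eq-lawson2} to $\lambda = -1$ and insert the value $A(1,-1) = h$, which was computed just before Corollary \ref{cor-Phi-F}. Invoking the twisting relation $F(z,\bar z,-1) = h\,F(z,\bar z,1)\,h^{-1}$ then rewrites the right-hand side as $h^{-1}\cdot hF(z,\bar z,1)h^{-1}\cdot F(z,\bar z,1)^{-1}$, and using $h^2 = e$ (which follows from $h^2 \in \mathrm{Center}(G) = \{e\}$) collapses this to $F(z,\bar z,1)\,h\,F(z,\bar z,1)^{-1} = \mathbb{F}(z,\bar z,1)$.

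The only subtlety worth flagging is the careful bookkeeping of the two loop parameters $\lambda$ and $\tilde\lambda$ and of the two different normalizations ($F(z_0,\bar z_0,\cdot) = e$ on the frame side, $\Phi(\cdot,\cdot,\cdot,1) = e$ on the extended-solution side). There is no substantive obstacle: once \eqref{eq-lawson} is in hand, all three assertions of the corollary are one-line substitutions, using \eqref{eq-A}, the value $A(1,-1) = h$, and the twisting condition for $F$.
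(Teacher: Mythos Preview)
Your proof is correct and follows essentially the same route as the paper: the corollary is obtained by setting $\lambda=1$ in \eqref{eq-lawson} and relabelling, and the paper's verification of the $\lambda=-1$ case is exactly the chain $\Phi(z,\bar z,1,-1)=A(1,-1)^{-1}F(z,\bar z,-1)F(z,\bar z,1)^{-1}=F(z,\bar z,1)hF(z,\bar z,1)^{-1}=\mathbb{F}(z,\bar z,1)$ carried out just before the corollary is stated. Your explicit justification that $\Phi(z,\bar z,1,\cdot)\in\Omega G$ via $A(1,1)=e$ is a welcome addition, since the paper asserts this without spelling it out.
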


%MMMMMMMMMMMMMMMMMMMMMMMMMMMMMMM
\subsection{Finite  {uniton type   \`{a}}
 la Burstall-Guest for harmonic maps into compact Lie groups}

 Let us recall that in Definition \ref{def-uni} we have given the definition of harmonic maps of finite uniton type into Lie groups. Now we want to define the notion of a ``finite uniton number''.
  {It has been introduced by Uhlenbeck \cite{Uh} for $U(n)$ and by Burstall-Guest
 \cite{BuGu} for a general compact real Lie group $G$.}

\begin{definition}  \label{def-f.u.}
Let $\mathbb{F}:M \rightarrow G$  be a harmonic map into a real Lie group $G$.
Assume there exists a global extended
solution $\Phi(z,
\bar{z},\tilde\lambda):M\rightarrow \Lambda G^{\mathbb{C}}$ (i.e., $\mathbb{F}$ has trivial monodromy).
We say that $\mathbb{F}$ has {\it finite uniton number $k$} if
(see \eqref{eq-alg-loop} for the definition of $ \Omega^k_{alg} G $)
 \begin{equation}\Phi(M)\subset \Omega^k_{alg} G ,\
\hbox{ and } \Phi(M)\nsubseteq \Omega^{k-1}_{alg} G .\end{equation}
 In this case we write  $r(\Phi)=k$ and the minimal uniton number of $\mathbb{F}$ is defined
as \[r(\mathbb{F}):=min\{r(\gamma  Ad(\Phi))| \gamma\in \Omega_{alg} Ad G \}.\]
\end{definition}

 It is important to this paper that the notion `` finite uniton number"  harmonic maps and  ``finite uniton type  harmonic maps" describe the same class of harmonic maps.
\begin{proposition} \label{typeequivnumber}\cite{DoWa-AT}
$\mathcal F$ is a harmonic map of finite uniton type in $G/K$ if and only if
$\mathbb{F} = \mathfrak{C}_h \circ \mathcal{F}$ is a harmonic  map of finite uniton number, where $\mathfrak{C}_h$ is the modified  Cartan  embedding \eqref{eq-Cartan}.
\end{proposition}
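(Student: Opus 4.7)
The plan is to exploit Corollary \ref{cor-Phi-F}, which expresses the extended solution $\Phi$ of the modified harmonic map $\mathbb{F}=\mathfrak{C}_h\circ\mathcal{F}$ as a very explicit product built from the extended frame $F$ of $\mathcal{F}$:
\[
\Phi(z,\bar z,1,\lambda)=A(1,\lambda)^{-1}\,F(z,\bar z,\lambda)\,F(z,\bar z,1)^{-1},
\]
where $A(1,\lambda)=\Phi(z_0,\bar z_0,1,\lambda)^{-1}$ depends only on $\lambda$ (and not on $(z,\bar z)$), and $F(z,\bar z,1)$ depends only on $(z,\bar z)$ (and not on $\lambda$). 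Since algebraic (finite Laurent polynomial) behavior in $\lambda$ is preserved by multiplication by $\lambda$-independent factors and by fixed algebraic loops, each side of this identity lies in an algebraic loop group precisely when the other does, up to a controlled shift in degree. The whole argument is therefore a direct reading of Corollary \ref{cor-Phi-F}, once the bookkeeping about degrees and twisting is taken care of.

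For the forward direction, assume $\mathcal{F}$ is of finite uniton type in the sense of Definition \ref{def-uni}. Then $F(z,\bar z,\lambda)\in\Lambda_{alg}G_\sigma$, i.e.\ it is a Laurent polynomial in $\lambda$ with $(z,\bar z)$-dependent coefficients, of uniformly bounded degree. Substituting into the displayed identity, the factor $F(z,\bar z,1)$ is $\lambda$-independent and $A(1,\lambda)$ is a fixed algebraic loop, so $\Phi(z,\bar z,1,\lambda)$ lies in $\Omega_{alg}^{k'}G$ for some $k'$ depending only on the uniton type bound. Hence $\mathbb{F}$ has finite uniton number.

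For the converse, assume $\mathbb{F}$ has finite uniton number, so some extended solution $\Phi(z,\bar z,\tilde\lambda)$ lies in $\Omega_{alg}^kG$. Solving the identity above for $F$ gives
\[
F(z,\bar z,\lambda)=A(1,\lambda)\,\Phi(z,\bar z,1,\lambda)\,F(z,\bar z,1),
\]
which is a product of an algebraic loop (the value of $\Phi$ at the base point in $(z,\bar z)$), an algebraic loop depending on $(z,\bar z)$, and a $\lambda$-independent factor. Hence $F$ is algebraic in $\lambda$, so $\mathcal{F}$ is of finite uniton type. The twisting condition $\sigma(F(\lambda))=F(-\lambda)$ follows automatically since $F$ was produced as the extended frame of $\mathcal{F}:M\rightarrow G/\hat K$ and need not be verified anew.

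The main subtlety is matching the precise notions of ``algebraic'' used on the two sides: finite uniton number is formulated via membership in $\Omega^k_{alg}G$ inside the \emph{untwisted} based loop group, while finite uniton type is formulated inside $\Lambda G_\sigma$. One must therefore check that the $\lambda$-independent multiplicative factors $A(1,\lambda)$ and $F(z,\bar z,1)$ only shift the polynomial degree by a bounded amount and do not destroy algebraicity, and that the $\sigma$-twisting is compatible with the $\Phi\leftrightarrow F$ correspondence derived in Subsection \ref{414}. Once these points are settled, both implications reduce to reading off polynomial degrees from Corollary \ref{cor-Phi-F}, and the proposition follows.
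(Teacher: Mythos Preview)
The paper does not give its own proof of this proposition; it is imported from \cite{DoWa-AT}. Your approach via Corollary~\ref{cor-Phi-F} is the natural one and handles the algebraicity condition correctly, but there is a genuine gap.

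``Finite uniton type'' is, by the Remark following Definition~\ref{def-uni}, the conjunction of two conditions: algebraic $(U2)$ \emph{and} totally symmetric $(U1)$. Likewise, Definition~\ref{def-f.u.} of finite uniton number presupposes that a \emph{global} extended solution $\Phi:M\to\Omega G$ exists (``$\mathbb{F}$ has trivial monodromy''). Your argument treats only the Laurent-polynomial part and never addresses monodromy. You must also show that $F$ is totally symmetric if and only if some $\Phi$ descends to $M$. This does follow from the same identity: if $F(\gamma.z,\overline{\gamma.z},\lambda)=F(z,\bar z,\lambda)k(\gamma,z,\bar z)$ then the $K$-factor cancels between $F(z,\bar z,\lambda)$ and $F(z,\bar z,1)^{-1}$, so $\Phi$ is $\pi_1(M)$-invariant; conversely a global $\Phi$ together with $F(\gamma.z,\overline{\gamma.z},1)=F(z,\bar z,1)k$ (which holds because $\mathcal{F}$ lives on $M$) forces $F(\gamma.z,\overline{\gamma.z},\lambda)=F(z,\bar z,\lambda)k$. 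But this needs to be said; as written, half of the equivalence is missing.

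A smaller point: in the forward direction you claim ``$A(1,\lambda)$ is a fixed algebraic loop'' without justification. Since $A(1,\lambda)=\Phi(z_0,\bar z_0,1,\lambda)^{-1}$ and you have not yet shown $\Phi$ is algebraic, this is circular. The clean fix is simply to \emph{define} $\Phi:=F(z,\bar z,\lambda)F(z,\bar z,1)^{-1}$ (equivalently, normalize so that $A(1,\lambda)=e$); one checks from \eqref{eq-Uh2} that this is an extended solution, and its algebraicity is then immediate from that of $F$.
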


\begin{remark}\
\begin{enumerate}
\item The main goals of this paper is a characterization of the normalized potentials
of all finite uniton type harmonic maps into $G/K$.
The potential for such a  harmonic map is the same as the potential
for the induced harmonic map into $G/\hat{K}$,
where $\hat{K} = G^{\sigma}$, since the different Cartan maps have the same images $\mathfrak{C}(gK)=\mathfrak{C}(g\hat{K})$ for all $g\in G$. We will therefore always assume in this section that
$\hat{K} = G^{\sigma}$. In this case the Cartan map $\mathfrak{C}$ actually is an embedding.

\item   {If $M$ is simply connected, then a global extended solution
$\Phi(z,\bar{z},\lambda):M\rightarrow \Omega G $ always exists, including the case $M=S^2$ (see Theorem 2.2 of \cite{Uh}, also see \cite{Segal}, Theorem 1.1 of \cite{BuGu})}.
This is in contrast to the case of extended
 frames, in which case we have explained above that on $S^2$ the extended frame needs to have (two)
 singularities due to the topology of $S^2$. In general, the extended solution may not exist globally on $M$ if $M$ is not simply connected.
\end{enumerate}
\end{remark}
\vspace{2mm}
Now let us turn to the  Burstall-Guest theory for harmonic maps into Lie groups of  {finite uniton number}.
 Let $\mathrm{T}\subset G $  be a maximal  {torus of $G$}  with $\mathfrak{t}$ the Lie algebra of $\mathrm{T}$.
 We can identify all the homomorphisms from $S^1$ to $\mathrm{T}$ with the integer lattice $\mathcal{I}:=(2\pi)^{-1}\exp^{-1}(e)\cap\mathfrak{t}$  in $\mathfrak{t}$ via the map
\begin{equation}
\begin{array}{llllll}
\mathcal{I}=(2\pi)^{-1}\exp^{-1}(e)\cap\mathfrak{t}&\longrightarrow \hbox{\{homomorpisms from $S^1$ to $\mathrm{T}$\}}, \\
\ \ \ \ \ \ \ \ \ \ \ \ \xi  \ \ \ &\longmapsto \gamma_{\xi},\\
\end{array}
\end{equation}
where  $\gamma_{\xi}:S^1\rightarrow T$ is defined by
\begin{equation}\gamma_{\xi}(\lambda):=\exp(t\xi),\ \ \hbox{ for all }\ \ \lambda=e^{it}\in S^1.
\end{equation}
Let  {$\mathcal {C}_0$} be a fundamental Weyl chamber of $\mathfrak{t}$. Set $\mathcal{I}'=\mathcal {C}_0\cap \mathcal{I}$. Then $\mathcal{I}'$ parameterizes the
conjugacy classes of homomorphisms $S^1\rightarrow G$.
Let $\Delta$ be the set of roots of $\mathfrak{g}^{\mathbb{C}}$. We have the root space decomposition
$\mathfrak{g}^{\mathbb{C}}=\mathfrak{t}^{\mathbb{C}}\oplus (\underset{\theta\in\Delta}{\oplus}\mathfrak{g}_{\theta} ).$  Decompose $\Delta$ as $\Delta=\Delta^-\cup\Delta^+$ according to  {$\mathcal {C}_0$}.
 Let $\theta_1,\cdots,\theta_l\in \Delta^{+}$ be the simple roots. We denote by $\xi_1,\cdots,\xi_l\in \mathfrak{t}$ the basis of $\mathfrak{t}$ which is dual to
$\theta_1,\cdots,\theta_l$ in the sense that $\theta_j(\xi_k)=\sqrt{-1}\delta_{jk}$.

\begin{definition} $($  {p.555 of \cite{BuGu}}$)$
An element $\xi$ in $\mathcal{I}'\backslash\{0\}$ is called a {\em canonical} element, if $\xi=\xi_{j_1}+\cdots+\xi_{j_k}$ with $\xi_{j_1},\cdots,\xi_{j_k}\in \{\xi_1,\cdots,\xi_l\}$ pairwise different. In other words, for every simple root $\theta_j$, we have that $\theta_{j}(\xi)$ only attains the values $0$ or $\sqrt{-1}$.\end{definition}

For $\theta\in\Delta$ and $X\in\mathfrak{g}_{\theta}$ we obtain
$$ad\xi X=\theta(\xi)X\ \ \hbox{ and }\ \theta(\xi)\in\sqrt{-1}\mathbb{Z}.$$
Let $\mathfrak{g}^{\xi}_{j}$ be the $\sqrt{-1}\cdot j-\hbox{eigenspace}$ of $\hbox{ad}\xi$. Then
 \begin{equation} \label{defgjxi}
\mathfrak{g}^{\xi}_j=\underset{\theta(\xi)=\sqrt{-1} j}{\oplus}\mathfrak{g}_{\theta}, \hbox{ and } \mathfrak{g}^{\mathbb{C}}=\underset{j}{\oplus}\ \mathfrak{g}^{\xi}_j.
\end{equation}
We define the {\em height} of $\xi$ as the non-negative integer \begin{equation}r(\xi)=\hbox{ max}\{j|\ \mathfrak{g}^{\xi}_j\neq0\ \}.\end{equation}

\begin{lemma}\label{lemma-xi} (Lemma 3.4 of \cite{BuGu})  Let $\xi=\sum_{i=1}^kn_{j_i}\xi_{j_i}\in \mathcal{I}'$ with $n_{j_i}>0$. Set $\xi_{can}=\sum_{i=1}^k\xi_{j_i}$. Then we have
 \begin{equation}
 \mathfrak{g}^{\xi}_0=\mathfrak{g}^{\xi_{can}}_0,\ \ \ ~~ \sum_{0\leq j\leq r(\xi)-1} \mathfrak{g}^{\xi}_{j+1}=\sum_{0\leq j\leq r(\xi_{can})-1}  \mathfrak{g}^{\xi_{can}}_{j+1}.
 \end{equation}
\end{lemma}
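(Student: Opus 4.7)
The plan is to reduce both identities to a direct comparison of the values $\theta(\xi)$ and $\theta(\xi_{can})$ across all roots $\theta \in \Delta$, exploiting the fact that every root has all of its coordinates (in the simple-root basis) of the same sign. First I would expand an arbitrary root as $\theta = \sum_{l=1}^{\ell} m_l \theta_l$ and, using the duality $\theta_j(\xi_k) = \sqrt{-1}\,\delta_{jk}$, compute
\[
\theta(\xi) \;=\; \sqrt{-1}\sum_{l=1}^{\ell} m_l n_l, \qquad \theta(\xi_{can}) \;=\; \sqrt{-1}\sum_{l=1}^{\ell} m_l \epsilon_l,
\]
where $n_l := n_{j_i}$ when $l = j_i$ and $n_l := 0$ otherwise, while $\epsilon_l := 1$ when $l \in \{j_1,\dots,j_k\}$ and $\epsilon_l := 0$ otherwise. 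The crucial observation is the equivalence $n_l > 0 \Longleftrightarrow \epsilon_l = 1$, which follows from the hypothesis $n_{j_i} > 0$.

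For the first identity I would argue that $\mathfrak{g}_0^\xi$ and $\mathfrak{g}_0^{\xi_{can}}$ both contain $\mathfrak{t}^{\mathbb C}$, so it is enough to show $\theta(\xi) = 0 \Longleftrightarrow \theta(\xi_{can}) = 0$ for each $\theta \in \Delta$. Because the coefficients $m_l$ are either all nonnegative (when $\theta \in \Delta^+$) or all nonpositive (when $\theta \in \Delta^-$), the terms in the sum $\sum_l m_l n_l$ cannot cancel; hence this sum vanishes iff $m_l n_l = 0$ for every $l$, iff $m_l \epsilon_l = 0$ for every $l$ (by the support equivalence above), iff $\sum_l m_l \epsilon_l$ vanishes.

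For the second identity, the key point is that $\sum_{0 \le j \le r(\xi)-1}\mathfrak{g}_{j+1}^\xi$ is nothing but the direct sum of those root spaces $\mathfrak{g}_\theta$ with $-\sqrt{-1}\,\theta(\xi) > 0$, and likewise for $\xi_{can}$, regardless of the actual value of the height. By the same same-sign principle, $\sum_l m_l n_l > 0$ iff some $l$ satisfies $m_l > 0$ and $n_l > 0$, iff some $l$ satisfies $m_l > 0$ and $\epsilon_l = 1$, iff $\sum_l m_l \epsilon_l > 0$; the symmetric argument handles the strictly negative case. Thus the index sets of roots contributing to the two sums coincide.

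The whole argument is essentially bookkeeping, and the only potential pitfall I would be careful about is the second part: one must not be distracted by the fact that the individual integers $r(\xi)$ and $r(\xi_{can})$ typically differ (since rescaling $\xi_{j_i}$ by $n_{j_i}$ rescales the eigenvalues of $\mathrm{ad}\,\xi$), and instead note that the truncation $0 \le j \le r(\xi)-1$ is precisely chosen so that the sum exhausts all strictly positive $\mathrm{ad}\,\xi$-eigenspaces. Once this is phrased correctly, the same-sign property closes the argument with no further input.
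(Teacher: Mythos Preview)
Your argument is correct. The paper does not give its own proof of this lemma; it is quoted verbatim from \cite{BuGu} (Lemma~3.4 there) and used as a black box. Your proposal supplies exactly the standard root-theoretic justification: reduce to comparing $\theta(\xi)$ with $\theta(\xi_{can})$ root by root, and exploit that the simple-root coefficients of any root are all of one sign so that no cancellation can occur. Both identities then follow from the equivalence $n_l>0\Longleftrightarrow \epsilon_l=1$, i.e.\ from the fact that $\xi$ and $\xi_{can}$ have the same support in the dual basis $\{\xi_1,\dots,\xi_\ell\}$. Your closing remark that $\sum_{0\le j\le r(\xi)-1}\mathfrak{g}^{\xi}_{j+1}$ is simply the sum of all strictly positive $\mathrm{ad}\,\xi$-eigenspaces (the upper cutoff $r(\xi)$ being vacuous) is the right way to see why the differing heights cause no trouble.
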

Set
\begin{equation}\left\{
\begin{array}{llllll}
&\mathfrak{f}^{\xi}_j&:= &\underset{k\leq j}{\oplus}\mathfrak{g}^{\xi}_k,\\
 &(\mathfrak{f}^{\xi}_j)^{\perp}&:=&\underset{j< k \leq r(\xi)}{\oplus} \mathfrak{g}^{\xi}_k, \\
 &\mathfrak{u}^0_{\xi}&:=&\underset{0\leq j <r(\xi)} {\oplus}\lambda^{j}(\mathfrak{f}^{\xi}_j)^{\perp}\in \Lambda^+\mathfrak{g}^\C_{\sigma}.\\
\end{array}\right.
\end{equation}
 {Now we can state (in our notation) some of the main results of \cite{BuGu}.}

\begin{theorem}\label{thm-finite-uniton0} (Theorem 1.2, Theorem 4.5, and  {p.560} of \cite{BuGu})
Assume $G$ is connected, compact, and semisimple with trivial center.
\begin{enumerate}
  \item Let $\Phi:M\rightarrow \Omega^k_{alg}G$ be an extended solution of finite uniton number. Then there exists some canonical $\xi\in \mathcal{I}'$, some $\gamma\in \Omega_{alg}G$, and some discrete subset $D'\subset M$, such that on $M\setminus D'$,  {the following Iwasawa decomposition of $\exp C \cdot \gamma_{\xi}$ holds:}
\begin{equation}\label{eq-finite}
 {\gamma\Phi=\exp C \cdot \gamma_{\xi} \cdot (\Phi^+_{\xi})^{-1},}
\end{equation}
where $C:M\rightarrow \mathfrak{u}^0_{\xi}$ is a (vector-valued) {\em meromorphic  {function}} with poles in $D'$
and  {$\Phi^+_{\xi}: M \backslash D' \rightarrow \Lambda^+G^\C_{\sigma}$. Moreover, the Maurer-Cartan form of $\exp C$ is given  by}
\begin{equation}\label{eq-C}
 {(\exp C)^{-1}\dd(\exp C)=\sum_{0\leq j\leq r(\xi)-1}\lambda^j A_j'\dd z,}
\end{equation}
 {with $A_j':M\rightarrow \mathfrak{g}^{\xi}_{j+1}$ being meromorphic functions with poles contained in $D'$  for each $j$.}

 \item
 Conversely, let $\xi\in \mathcal{I}'$ be a canonical element and  $C:M \rightarrow \mathfrak{u}^0_{\xi}$  a meromorphic function satisfying \eqref{eq-C}. Let $D'\subset M$ be the set of poles of $C$,
 {and  $\Phi=(\exp C \cdot \gamma_{\xi}) \cdot (\Phi^+_{\xi})^{-1} $  be an Iwasawa decomposition of $ \exp C \cdot \gamma_{\xi}$. Then $\Phi$ is an extended solution of finite uniton number on $M$.}
\end{enumerate}
\end{theorem}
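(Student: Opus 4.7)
\emph{Plan.} This theorem is essentially a reformulation of Theorems~1.2 and 4.5 of \cite{BuGu} in notation compatible with the DPW framework. The strategy is to combine the Bruhat/Birkhoff stratification of the algebraic loop group $\Omega_{alg}G$ with the explicit Iwasawa decomposition around the homomorphisms $\gamma_\xi$, and then to read off the constraints on the meromorphic datum $C$ from the extended solution PDE~\eqref{eq-Uh1}.

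\emph{Forward direction.} I would begin from the Pressley--Segal style stratification of $\Omega_{alg}G$ by $\Lambda^+G^\C$-orbits through the group homomorphisms $\gamma_\xi$ with $\xi\in\mathcal{I}'$; these cells refine the filtration $\{\Omega^k_{alg}G\}$, and each $\Omega^k_{alg}G$ is a finite-dimensional complex projective variety. Since $\Phi$ is holomorphic into $\Omega^k_{alg}G$ with the natural complex structure recorded in \cite{BuGu}, it meets each stratum either in a (relatively) open set or in a discrete set, so there is a unique top stratum containing $\Phi(M\setminus D')$ for some discrete $D'\subset M$, indexed by a single $\xi\in\mathcal{I}'$. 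Writing the local parametrization of this orbit explicitly produces a factorization $\gamma\Phi=\exp C\cdot\gamma_\xi\cdot(\Phi^+_\xi)^{-1}$, where $\gamma\in\Omega_{alg}G$ is a dressing element chosen to bring $\Phi$ into the canonical chart, $\exp C$ sweeps out a unipotent slice transverse to the stabilizer of $\gamma_\xi$, and $\Phi^+_\xi$ is the positive Iwasawa factor. Lemma~\ref{lemma-xi} then permits replacing $\xi$ by its canonical reduction $\xi_{can}$ without altering $\mathfrak{u}^0_\xi$. Finally, the explicit form~\eqref{eq-C} of $(\exp C)^{-1}\dd(\exp C)$ is obtained by computing the Maurer--Cartan form of $\gamma\Phi$ and matching powers of $\lambda$ against equation~\eqref{eq-Uh1}: the range $0\le j\le r(\xi)-1$ comes from the $\lambda$-degrees available in $\mathfrak{u}^0_\xi$, and the pure $\dd z$-type reflects the $(1,0)$-part of the extended solution condition.

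\emph{Converse direction.} Given canonical $\xi$ and $C\colon M\to\mathfrak{u}^0_\xi$ satisfying~\eqref{eq-C}, I would first note that $\exp C\cdot\gamma_\xi$ is a $\sigma$-twisted loop-group-valued meromorphic function on $M$ whose $\lambda$-degree is controlled by $r(\xi)$. An Iwasawa decomposition of this product, valid off the discrete pole set $D'$, then produces $\Phi\in\Omega G$ and $\Phi^+_\xi\in\Lambda^+G^\C_\sigma$. The remaining task is to verify that $\Phi$ solves~\eqref{eq-Uh1}: one computes $\Phi^{-1}\dd\Phi$ using the factorization, substitutes the prescribed form of $(\exp C)^{-1}\dd(\exp C)$, and reads off $(1-\tilde\lambda^{-1})\mathbb{A}^{(1,0)}+(1-\tilde\lambda)\mathbb{A}^{(0,1)}$ after isolating the $\Phi^+_\xi$ contribution, which is absorbed into the $\mathbb{A}$-terms by the standard Iwasawa splitting argument. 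The finite uniton number bound is automatic since $\exp C\cdot\gamma_\xi\in\Omega^{r(\xi)}_{alg}G^\C$.

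\emph{Main obstacle.} The difficult step is the forward direction: showing that the top stratum meeting $\Phi(M)$ is indeed of the form $\Lambda^+G^\C\cdot\gamma_\xi$ for some $\xi\in\mathcal{I}'$, and that the dressing element $\gamma$ can be chosen in $\Omega_{alg}G$ rather than merely in $\Lambda G^\C$. This rests on the Morse-theoretic analysis of the energy functional on $\Omega_{alg}G$ carried out in \cite{BuGu} following \cite{Segal}, whose critical set is exactly $\{\gamma_\xi\colon\xi\in\mathcal{I}'\}$ and whose gradient flow realizes the required stratification. The constraint that $C$ land in the specific subalgebra $\mathfrak{u}^0_\xi$ (and not merely in $\Lambda^+\mathfrak{g}^\C_\sigma$) comes from identifying the tangent space to the $\Lambda^+G^\C$-orbit at $\gamma_\xi$ via the $\mathrm{ad}\,\xi$-weight decomposition encoded in~\eqref{defgjxi}.
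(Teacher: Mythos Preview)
The paper does not supply its own proof of this statement: it is quoted verbatim as a combination of Theorem~1.2, Theorem~4.5, and the computation on p.~560 of \cite{BuGu}, with only an explanatory remark afterwards (see the three items following the theorem). Your outline is therefore not being compared against a proof in the present paper, but against the Burstall--Guest argument it cites, and on that score your sketch is essentially faithful: stratify $\Omega_{alg}G$ by the $\Lambda^+G^\C$-orbits of the homomorphisms $\gamma_\xi$, use the finite-dimensionality of $\Omega^k_{alg}G$ and the Morse/energy picture of \cite{BuGu,Segal} to show the image of $\Phi$ lies in a single top cell off a discrete set, and then read off $C$ from the unipotent chart.

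Two small points deserve tightening. First, your phrase ``$\Phi$ is holomorphic into $\Omega^k_{alg}G$'' is not literally correct: the extended solution $\Phi$ is not holomorphic in $z$; what is holomorphic is the induced map into the quotient $\Omega_{alg}G^\C/\Lambda^+G^\C$ (equivalently, the meromorphic frame $\exp C$ obtained after Birkhoff factoring). This is exactly what drives the ``open-or-discrete'' dichotomy with respect to the strata, so the logic survives, but the wording should be adjusted. Second, your derivation of \eqref{eq-C} accounts for the range of $j$ but not for the sharper statement $A_j'\in\mathfrak{g}^\xi_{j+1}$ (as opposed to $\mathfrak{f}^\xi_{j+1}$). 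As the paper notes in Remark~(3) following the theorem, this comes from combining the harmonicity condition (which gives $A_j'\in\mathfrak{f}^\xi_{j+1}$, p.~560 of \cite{BuGu}) with the constraint $C\in\mathfrak{u}^0_\xi$ (which forces $A_j'\in\sum_{k\ge j+1}\mathfrak{g}^\xi_k$, p.~561 of \cite{BuGu}); the intersection is exactly $\mathfrak{g}^\xi_{j+1}$.
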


\begin{remark}\
  \begin{enumerate}
\item
 {Let $z_0\in M\setminus D'$ be some point. Then $C_0=C(z_0)\in \mathfrak{u}^0_{\xi}$
 and $\Phi(z_0,\bar{z}_0,\lambda)\in\Omega^k_{alg}G$ gives some initial condition.}
  \item By Lemma \ref{lemma-es}, since $\Phi(z,\bar{z},\lambda)$ is an extended solution of a harmonic map
  $\Phi(z,\bar{z},-1)$, then  $\gamma\Phi(z,\bar{z},\lambda)$ is  an extended solution of the harmonic map
   $\gamma(-1)\Phi(z,\bar{z},-1)$, which is congruent to the harmonic map $\Phi(z,\bar{z},-1)$ up to the group action of $G$ from the left.  {So for the harmonic map $\gamma(-1)\Phi(z,\bar{z},-1)$ we have an extended solution for which without loss of generality we can always assume $\gamma=e$ in $(1)$ of the theorem above.}

  \item
  On page 560 of \cite{BuGu}, the elements $A'_j$ are defined so that they take values in $\mathfrak{f}^{\xi}_{j+1}=\sum_{k\leq j+1}\mathfrak{g}^{\xi}_{k}$ which is due to the harmonicity of $\mathbb{F}$. In view of the restriction on $C$ to take values in $\mathfrak{u}_{\xi}^0$,  the computations on page 561 of \cite{BuGu} for $(\exp C)^{-1}(\exp C)_z$  imply that
$A'_j$  takes values in $\sum_{k\geq j+1}\mathfrak{g}^{\xi}_{k}$.  Therefore $A'_j \in \mathfrak{g}^{\xi}_{j+1}$ as stated above.
\end{enumerate}
\end{remark}

%MMMMMMMMMMMMMMMMMMMMMMMMMMMMMMM

\subsection{Finite uniton  {type \`{a} la} Burstall-Guest for harmonic maps into symmetric spaces} \label{f.u.alaBuGu}

 {Consider a harmonic map $\mathcal{F}: M \rightarrow G/K$ from $M$ into the inner symmetric space
$G/\hat K$. As stated in \cite{BuGu}, the inner symmetric space $G/\hat{K}$,  can be embedded into $G$ via the modified Cartan map $\mathfrak{C}_h$  such that  $\mathfrak{C}_h(G/\hat{K})$ is a connected component of
$\sqrt{e}$,\  where $\sqrt{e}:=\{g\in G| g^2=e \}.$}
 Let  $$ {\T}:\Omega G\rightarrow \Omega G,\ \  {\T}(\gamma)(\lambda)=\gamma(-\lambda)\gamma(-1)^{-1}$$
be an involution of $\Omega G$ (see Section 2.2) with the fixed point set
\[(\Omega G)_{ {\T}}=\{\gamma\in\Omega G|  {\T}(\gamma)=\gamma\}.\]
 { We give a version of Proposition 5.2 of \cite{BuGu} adapted to our approach.}

\begin{proposition}\label{eq-check-xi} { Let $\mathcal F$ be a harmonic map of finite uniton number with an extended frame $F(z,\bar z, \lambda)$. Let $\Phi(z,\bar z,1,\lambda)=A(1, {\lambda})^{-1} F(z,\bar{z}, \lambda)F(z,\bar{z}, 1)^{-1}$ be an extended solution for the harmonic map $\mathbb{F}=\mathfrak{C}_h\circ\mathcal F$ as described in
(\ref{eq-lawson2}) of Corollary \ref{cor-Phi-F}.} Then there exists some $\tilde\xi \in \mathfrak{t}$  satisfying
$\exp( \pi \tilde{\xi}) = h$, where $\mathfrak{t}$  denotes the Lie algebra of  {the maximal torus
$\mathrm{T}$ in $G$.}
 Setting $\gamma_{\tilde\xi} (\lambda) = \exp (t \tilde\xi)$ for $\lambda = e^{it}$ and $\tilde{\gamma} (\lambda) = \gamma_{\tilde\xi} (\lambda) A(1, \lambda)$ we obtain
$\tilde{\gamma}(\lambda)\in\Omega G$ , $\tilde{\gamma}(-1)=e$ and
\begin{equation}\label{eq-TP}
 {\T}(\tilde{\gamma}(\lambda)\Phi(z,\bar z,1,\lambda))=\tilde{\gamma}(\lambda)\Phi(z,\bar z,1,\lambda).
 \end{equation}
 Moreover,  $\tilde{\gamma}(\lambda)\Phi$ is also an extended solution for
 $\mathbb F$.
Furthermore, if $\Phi(z,\bar z,1,\lambda)$ takes  {values} in $\Omega _{alg}G$, then we also have  $\tilde{\gamma}(\lambda)\in \Omega _{alg}G$ and  $\tilde{\gamma}(\lambda)\Phi(z,\bar z,1,\lambda)$ takes  {values} in $(\Omega _{alg}G)_{ {\T}}$.
\end{proposition}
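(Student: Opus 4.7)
The plan is to verify each assertion in turn using the explicit formula (\ref{eq-lawson2}) for $\Phi$, the values $A(1,1)=e$ and $A(1,-1)=h$ established in Section \ref{414}, and the $\sigma$-twisting identity $F(z,\bar z,-\lambda)=h^{-1}F(z,\bar z,\lambda)h$.

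First I would address the existence of $\tilde\xi$. Since $\sigma^2=\mathrm{id}$ on the center-free group $G$, we have $h^2=e$. We may assume $h\in\mathrm{T}$ (the inner involution $\sigma=\mathrm{Ad}_h$ fixes a maximal torus containing $h$, which we choose as $\mathrm{T}$), and surjectivity of $\exp\colon\mathfrak{t}\to\mathrm{T}$ then yields $\tilde\xi\in\mathfrak{t}$ with $\exp(\pi\tilde\xi)=h$. Automatically $\exp(2\pi\tilde\xi)=h^2=e$, so the rule $\gamma_{\tilde\xi}(\lambda)=\exp(t\tilde\xi)$ (for $\lambda=e^{it}$) descends to a well-defined group homomorphism $S^1\to \mathrm{T}\subset G$. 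Combining $\gamma_{\tilde\xi}(1)=e$ and $\gamma_{\tilde\xi}(-1)=h$ with $A(1,1)=e$ and $A(1,-1)=h$ yields $\tilde\gamma(1)=e$, so $\tilde\gamma\in\Omega G$, and $\tilde\gamma(-1)=h\cdot h=e$.

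The main step is the $\T$-invariance (\ref{eq-TP}). Using $\tilde\gamma(-1)=e$ together with $\Phi(z,\bar z,1,-1)=\mathbb{F}$, the identity $\T(\tilde\gamma\Phi)=\tilde\gamma\Phi$ reduces to
\begin{equation*}
\tilde\gamma(-\lambda)\,\Phi(z,\bar z,1,-\lambda)\,\mathbb{F}^{-1} \;=\; \tilde\gamma(\lambda)\,\Phi(z,\bar z,1,\lambda).
\end{equation*}
I would substitute (\ref{eq-lawson2}) on both sides and simplify using $h^{-1}=h$, the twisting $F(-\lambda)=h^{-1}F(\lambda)h$, and the homomorphism identity $\gamma_{\tilde\xi}(-\lambda)=\gamma_{\tilde\xi}(\lambda)\,\gamma_{\tilde\xi}(-1)=\gamma_{\tilde\xi}(\lambda)\,h$. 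Both sides then telescope to the common expression $A(1,-\lambda)^{-1}\,h\,F(z,\bar z,\lambda)\,F(z,\bar z,1)^{-1}$, confirming $\T$-fixedness.

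The remaining statements follow quickly. Lemma \ref{lemma-es} applied to the loop $\tilde\gamma\in\Omega G$ with $\tilde\gamma(-1)=e$ shows that $\tilde\gamma\Phi$ is an extended solution of $\tilde\gamma(-1)\mathbb{F}=\mathbb{F}$. For the final algebraic claim, $\gamma_{\tilde\xi}$ is a one-parameter subgroup into a torus and is therefore a Laurent polynomial in $\lambda$, so $\gamma_{\tilde\xi}\in\Omega_{alg}G$; since $\Omega_{alg}G$ is closed under inversion and multiplication, $A(1,\cdot)=\Phi(z_0,\bar z_0,1,\cdot)^{-1}$ lies in $\Omega_{alg}G$ whenever $\Phi$ does, so $\tilde\gamma=\gamma_{\tilde\xi}A(1,\cdot)$ and $\tilde\gamma\Phi$ remain algebraic. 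The only delicate point I anticipate is keeping straight the interplay between the $\sigma$-twisting of $F$ (conjugation by $h$) and the involution $\T$ (a right multiplication by $\gamma(-1)^{-1}$); once these are aligned, the cancellations provided by $h^2=e$ go through cleanly.
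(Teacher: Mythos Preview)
Your approach is essentially the same as the paper's: both verify \eqref{eq-TP} by direct substitution, using the twisting identity for $F$ and the homomorphism property $\gamma_{\tilde\xi}(-\lambda)=\gamma_{\tilde\xi}(\lambda)h$. The paper makes one simplification you skipped over, namely that the $A(1,\lambda)$ factor cancels immediately:
\[
\tilde\gamma(\lambda)\,\Phi(z,\bar z,1,\lambda)
=\gamma_{\tilde\xi}(\lambda)\,A(1,\lambda)\cdot A(1,\lambda)^{-1}F(z,\bar z,\lambda)F(z,\bar z,1)^{-1}
=\gamma_{\tilde\xi}(\lambda)\,F(z,\bar z,\lambda)\,F(z,\bar z,1)^{-1},
\]
and then applies $\T$ to this expression. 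This is the common form to which both sides of your displayed identity actually reduce. Your stated target $A(1,-\lambda)^{-1}hF(z,\bar z,\lambda)F(z,\bar z,1)^{-1}$ is a slip: on the right-hand side the factor is $A(1,\lambda)^{-1}$, not $A(1,-\lambda)^{-1}$, and after the cancellation no $A$ survives at all. With that correction your computation goes through exactly as in the paper, and the remaining claims (extended solution via Lemma~\ref{lemma-es}, algebraicity of $\tilde\gamma$ from $A(1,\cdot)=\Phi(z_0,\bar z_0,1,\cdot)^{-1}$) are handled correctly.
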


\begin{proof} The proof follows closely the one of \cite{BuGu}. First we note that $h$ is contained in the maximal torus. This implies the existence of $\tilde\xi$.
Next we verify \eqref{eq-TP}. Since \[\gamma_{\tilde\xi}(-\lambda)=\exp((\pi+t)\tilde \xi)=
\gamma_{\tilde\xi}(\lambda)h\ \hbox{ and }\ F(z,\bar{z}, -\lambda)h=hF(z,\bar{z}, \lambda),\] we have
\[\begin{split}
 {\T}(\tilde{\gamma}(\lambda)\Phi(z,\bar z,1,\lambda))
&= {\T}\left(\gamma_{\tilde\xi}(\lambda)F(z,\bar{z}, \lambda)F(z,\bar{z}, 1)^{-1}\right)\\
&=\gamma_{\tilde\xi}(-\lambda)F(z,\bar{z}, -\lambda)F(z,\bar{z}, 1)^{-1}F(z,\bar{z}, 1)F(z,\bar{z}, -1)^{-1}\gamma_{\tilde\xi}(-1)\\
&=\gamma_{\check\xi}(\lambda)hF(z,\bar{z}, -\lambda)F(z,\bar{z}, -1)^{-1}h\\
&=\gamma_{\check\xi}(\lambda)F(z,\bar{z}, \lambda)hF(z,\bar{z}, 1)^{-1}\\
&=\tilde{\gamma}(\lambda)\Phi(z,\bar z,1,\lambda).
\end{split}\]
\end{proof}
 {By Proposition \ref{eq-check-xi} we can assume without loss of generality that $\Phi$ has the form \begin{equation}\label{eq-Phi-F}
\Phi=\gamma_{\tilde\xi}(\lambda)F(z,\bar{z}, \lambda)F(z,\bar{z}, 1)^{-1}.\end{equation}}
With this $\Phi$ we obtain:

\begin{theorem}\label{thm-finite-uniton1}(Proposition 5.3, Theorem 5.4  {and p.567 of } \cite{BuGu})
Assume that $G$ is connected, compact, and semisimple with trivial center.

  \begin{enumerate}
  \item Let $\Phi:M\rightarrow (\Omega^k_{alg}G)_\T$ be an extended solution for some harmonic map
   { $\mathcal F$  of finite uniton number related by \eqref{eq-Phi-F}.}  Then there exists some canonical element $\xi$ in $\mathcal{I}'$, some $\gamma \in \Omega_{alg}G$  {satisfying $\gamma(-1) = e,$ }and some discrete subset $D'$ of $M$ such that on $M\setminus D'$,
\begin{equation}
  { \gamma\Phi =\exp C \cdot \gamma_{\xi} \cdot (\Phi^+_{\xi})^{-1},}
 \end{equation}
where  {$C:M\rightarrow (\mathfrak{u}^0_{\xi})_\T$ is a {\em meromorphic map} on $M$ with poles in $D'$} and $$(\mathfrak{u}^0_{\xi})_\T=\bigoplus_{0\leq 2j <r(\xi)}\lambda^{2j}(\mathfrak{f}^{\xi}_{2j})^{\perp}.$$
Moreover, $\xi$  satisfies \begin{equation}G/\hat{K} \cong\{g(\exp \pi \xi)g^{-1}| g\in G\}.
\end{equation}

 {Note that both $\Phi$ and $\gamma \Phi$ are extended solutions of $\mathcal F$.}
  \item
  Conversely, let  $\xi\in \mathcal{I}'$ be a canonical element.
    {Assume that $C:M \rightarrow (\mathfrak{u}^0_{\xi})_\T$ is a meromorphic function} such that
 {\begin{equation}
(\exp C)^{-1}\dd(\exp C)=\sum_{0\leq 2j\leq r(\xi)-1}\lambda^j A_{2j}'\dd z,\
\end{equation}
with $A_{2j}':M \rightarrow \mathfrak{g}^{\xi}_{2j+1}$ being meromorprhic,
$\ 0\leq 2j\leq r(\xi)-1$.}
Let $D'\subset M$ be the set of poles of $C$, then
 {$\Phi_{M\setminus D'}=\gamma_{\xi}^{-1} \cdot  \exp C  \cdot \gamma_{\xi} \cdot (\Phi^+_{\xi})^{-1} $} is an extended solution  { for some harmonic map of finite uniton number  $F:M\setminus D' \rightarrow G/\hat{K}\cong\{g(\exp \pi \xi)g^{-1}| g\in G\}\subset G$.}
 \end{enumerate}\end{theorem}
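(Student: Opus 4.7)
My strategy is to reduce the proof to the corresponding Lie group version, Theorem \ref{thm-finite-uniton0}, and then to exploit the $\T$-invariance supplied by Proposition \ref{eq-check-xi} to peel off the additional structure specific to inner symmetric spaces.

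For part $(1)$, I would apply Theorem \ref{thm-finite-uniton0}$(1)$ to $\Phi: M \to \Omega^k_{alg} G$ viewed simply as an extended solution of the modified harmonic map $\mathbb F = \mathfrak C_h\circ\mathcal F$. This immediately yields a canonical element $\xi \in \mathcal I'$, a loop $\gamma \in \Omega_{alg} G$, a discrete $D'\subset M$, and a meromorphic $C: M \to \mathfrak u^0_\xi$ satisfying $\gamma\Phi = \exp C\cdot\gamma_\xi\cdot(\Phi^+_\xi)^{-1}$ together with \eqref{eq-C}. The next step is to impose the $\T$-twisting: since $\Phi \in (\Omega^k_{alg}G)_\T$, the substitution $\lambda \mapsto -\lambda$ combined with right-multiplication by $\gamma(-1)^{-1}$ leaves $\Phi$ unchanged. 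Applying $\T$ to both sides of the decomposition and using $\gamma_\xi(-\lambda) = \gamma_\xi(\lambda)\exp(\pi\xi)$, I would argue that only summands $\lambda^{j}(\mathfrak f^\xi_{j})^\perp$ with $j$ even can contribute to $C$, forcing $C$ into $(\mathfrak u^0_\xi)_\T = \bigoplus_{0\leq 2j < r(\xi)}\lambda^{2j}(\mathfrak f^\xi_{2j})^\perp$. The normalization $\gamma(-1) = e$ can then be secured by noting that $\Phi(z_0, \bar z_0, -1)$ is conjugate to $h$ and that $\gamma_\xi(-1) = \exp(\pi\xi)$, so after absorbing a constant loop one obtains $\exp(\pi\xi) = h$ up to conjugation, yielding the identification $G/\hat K \cong \{g\exp(\pi\xi)g^{-1} \mid g \in G\}$.

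For part $(2)$, I would first invoke Theorem \ref{thm-finite-uniton0}$(2)$ to conclude that $\tilde\Phi := \exp C \cdot \gamma_\xi \cdot (\Phi^+_\xi)^{-1}$ is an extended solution of finite uniton number for some harmonic map into $G$. Setting $\Phi := \gamma_\xi^{-1}\tilde\Phi$ merely multiplies on the left by a constant loop and, by Lemma \ref{lemma-es}, preserves the property of being an extended solution (of a harmonic map differing from the previous one by a left translation). The essential additional task is to verify $\T(\Phi) = \Phi$: since $C$ has only even powers of $\lambda$, the element $\exp C$ is invariant under $\lambda\mapsto-\lambda$, and using $\gamma_\xi(-\lambda) = \gamma_\xi(\lambda)\exp(\pi\xi)$ one computes that $\Phi(z,\bar z,-\lambda) = \Phi(z,\bar z,\lambda) \cdot \Phi(z,\bar z,-1)$, which is precisely the $\T$-fixed condition. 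Evaluating at $\lambda = -1$ then places $\Phi(\cdot,-1)$ in the conjugacy class of $\exp(\pi\xi)$, which via the identification $G/\hat K \cong \{g\exp(\pi\xi)g^{-1}\}$ produces the desired harmonic map of finite uniton number into $G/\hat K$.

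The main obstacle is the careful tracking of $\T$-invariance in part $(1)$: one must show not only that $C$ pointwise lies in $(\mathfrak u^0_\xi)_\T$, but also that the Iwasawa factor $\Phi^+_\xi$ can be chosen compatibly with the twisting (so that the $\T$-equivariance of $\Phi$ really pushes through the whole decomposition after adjusting $\gamma$ to satisfy $\gamma(-1)=e$). Equally delicate is the requirement that the canonical $\xi$ produced by Theorem \ref{thm-finite-uniton0}$(1)$ agrees — up to Weyl conjugation — with the element $\tilde\xi$ of Proposition \ref{eq-check-xi} determined by $h$, so that the connected component of $\sqrt e$ reached by $\Phi(\cdot,-1)$ is precisely $\mathfrak C_h(G/\hat K)$ and not some other $G$-conjugacy class in $\sqrt e$.
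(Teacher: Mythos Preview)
The paper does not supply a proof of this theorem at all: it is stated as a direct quotation of Proposition~5.3, Theorem~5.4, and p.~567 of \cite{BuGu}, and immediately afterwards the paper moves on to a remark and then to Theorem~\ref{thm-finite-uniton2}. So there is nothing in the paper to compare your argument against, beyond the observation that your strategy---reduce to the Lie group statement Theorem~\ref{thm-finite-uniton0} and then exploit the involution~$\T$---is exactly the route taken in \cite{BuGu} itself.

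Your outline is sound and you have correctly isolated the genuine work. Two comments on the obstacles you flag. First, in \cite{BuGu} the passage from the general $\gamma$ of Theorem~\ref{thm-finite-uniton0} to one with $\gamma(-1)=e$ is not done by a posteriori normalisation but is built into their Morse-theoretic argument: one works from the start inside $(\Omega_{alg}G)_\T$ and observes that the limiting homomorphism $\gamma_\xi$ of a $\T$-invariant extended solution is itself $\T$-fixed, forcing $\exp(\pi\xi)$ to lie in the correct component of $\sqrt{e}$; this simultaneously handles your second obstacle. Second, the compatibility of the Iwasawa factor $\Phi^+_\xi$ with the twisting is automatic once $\exp C\cdot\gamma_\xi$ is $\T$-invariant, by uniqueness of the (global, since $G$ is compact) Iwasawa decomposition in the twisted loop group. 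So neither obstacle is as serious as you suggest, provided one sets up the argument inside the $\T$-fixed locus from the outset rather than deducing $\T$-invariance after the fact.
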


\begin{remark}  A   generalization of Burstall and Guest's theory for outer compact symmetric spaces has been published in \cite{Esch-Ma-Qu}.
\end{remark}
%MMMMMMMMMMMMMMMMMMMMMMMMMMMMMMMMM

\subsection{The Burstall-Guest theory in relation to standard DPW theory} \label{BuGu<->DPW}

%MMMMMMMMMMMMMMMMMMMMMMMMMMMMMMMMMM

Using the above theorems, we can derive the normalized potential of harmonic maps of finite uniton type, showing that they are meromorphic 1-forms taking values in a fixed nilpotent Lie algebra.

  {Note that these results have been explained in Appendix B of \cite{BuGu} and Theorem 1.11 of \cite{Gu2002}. Here we rewrite them in terms of our language as well as a proof for later applications and for the convenience of readers. We also would like to point out that the work of Burstall and Guest does  not consider initial conditions, e.g.  for extended frames. So in general the value of an extended  frame will
 be different from $e$ at some fixed basepoint $z_0$.  We will show below in Theorem 4.20 below that also the standard DPW theory with prescribed initial condition at some fixed base point produces all harmonic maps of finite uniton number as well. All statements in the theorems below can be derived from the work of Burstall and Guest. But the presentation uses substantially  the DPW method. Therefore, for the convenience of the reader, we include a proof.}

\begin{theorem} \label{thm-finite-uniton2}We retain the  {the notation and the assumptions  of  Theorem \ref{thm-finite-uniton0} and Theorem \ref{thm-finite-uniton1} as needed.}
\begin{enumerate}
  \item
  Let $\mathbb{F}:M\rightarrow G$ be a harmonic map of finite uniton number  {with extended solution $\Phi(z,\bar z,\lambda)$ as stated in Theorem \ref{thm-finite-uniton0}. Then ${\Phi_-}:=\gamma_{\xi}^{-1}\cdot \exp C \cdot \gamma_{\xi}=\gamma_{\xi}^{-1}\cdot\Phi\cdot\Phi_+$} \violet{has} a Maurer-Cartan form
 {\begin{equation}\label{eq-eta} {\mathbb A_-}:= {\Phi_-}^{-1}\mathrm{d} {\Phi_-}=\lambda^{-1}\sum_{0\leq j\leq r(\xi)-1} A_j'\dd z, \end{equation}
where each $A_j':M\rightarrow \mathfrak{g}^{\xi}_{j+1}$ is a meromorphic function  {on $M$} with poles in $D'$.}
Moreover, at some base point $z_0\in M\setminus D'$ we have
 \begin{equation}\label{eq-initial1}
 {\Phi_-}(z_0)= {\Phi_{-0}}:= {\gamma_{\xi}^{-1} \cdot \exp C(z_0) \cdot \gamma_{\xi}} \in \Lambda^- G^{\C},\ C(z_0)\in\mathfrak{u}^0_{\xi}.
 \end{equation}

 Conversely, given $ {\mathbb A_-}$ which takes values in  $\lambda^{-1}\cdot\sum_{0\leq j\leq r(\xi)-1}\mathfrak{g}^{\xi}_{j+1}$ and an initial condition of $ {\Phi_-}$ of the form \eqref{eq-initial1},  {assume that  on $ M$,
there exists a global meromorphic solution $ {\Phi_-}$  (with poles in $\D'$)} \violet{to}
  \begin{equation}\label{eq-A-1} {\Phi_-}^{-1}\mathrm{d} {\Phi_-}= {\mathbb A_-}, \ ~~~ {\Phi_-}(z_0)= {\Phi_{-0}}= {\gamma_{\xi}^{-1} \cdot \exp C_0 \cdot \gamma_{\xi}\ } \in \Lambda^- G^{\C}\hbox{ with } C_0\in\mathfrak{u}^0_{\xi}.\end{equation}
  {The Iwasawa decomposition of $\gamma_{\xi}\Phi_-$ gives a harmonic map $\mathbb{F}$ of finite uniton number in $G$.}

  {The above two procedures are inverse to each other when the initial conditions match.}
 \item Let $\mathcal{F}:M\rightarrow G/\hat{K}$ be a harmonic map of finite uniton number
  {with extended frame $F(z,\bar z,\lambda)$ based at $z_0 \in M$ with initial value $e$.} Embed $G/\hat{K}$ into $G$ as totally geodesic submanifold via the  { modified} Cartan embedding. Then there exists some canonical $\xi\in \mathcal{I}'$,
some discrete subset $D'\subset M$, such that
 \begin{equation}
G/\hat{K}\cong\{g(\exp \pi \xi)g^{-1}| g\in G\},
\end{equation}
and that
\begin{equation}\label{eq-F_-}
     {F_-= \gamma_{\xi}^{-1} \cdot \exp C \cdot \gamma_{\xi}}
\end{equation} is a meromorphic extended frame of $\mathcal{F}$ with the normalized potential having the form
 {\begin{equation}\eta=F_-^{-1}\mathrm{d}F_-=\lambda^{-1}\sum_{0\leq 2j\leq r(\xi)-1} A_{2j}'\dd z, \end{equation}
where $A_{2j}':M\rightarrow \mathfrak{g}^{\xi}_{2j+1}$ is a meromorphic function  on $M$ with poles in $D'$ for each $j$.}  And at the base point $z_0$
 \begin{equation}\label{eq-initial2}F_-(z_0)=F_{-0}:= {\gamma_{\xi}^{-1} \cdot \exp C(z_0) \cdot \gamma_{\xi} } \ in\ \Lambda^- G^{\C},\ C(z_0)\in(\mathfrak{u}^0_{\xi})_T.
 \end{equation}
 Conversely, given a  meromorphic normalized potential $\eta$ which takes values in  $\lambda^{-1}\cdot\sum_{0\leq 2j\leq r(\xi)-1}\mathfrak{g}^{\xi}_{2j+1}$ and an initial condition
  {$F_{-0}$ of $F_-$ of } the form \eqref{eq-initial2},
 {assume that on $M$ there exists a global  solution $F_-$ (with poles in $\D'$)}  {to}
 \begin{equation}\label{eq-initial3}F_-^{-1}\mathrm{d}F_-=\eta, \ ~~~F_-(z_0)=F_{-0}. \end{equation}
 {The Iwasawa decomposition of $F_-(z,\lambda)$ gives the extended frame of a harmonic} map of finite uniton number into $ \{g(\exp \pi \xi)g^{-1}| g\in G\}\cong G/\hat{K} $.

  {The above two procedures are inverse to each other when the initial conditions match.}
\end{enumerate}
\end{theorem}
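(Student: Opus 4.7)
The plan is to extract the normalized potential directly from the Iwasawa-type decomposition of the extended solution provided by Theorem \ref{thm-finite-uniton0} (resp.\ Theorem \ref{thm-finite-uniton1}), by applying a grading-shift conjugation. The key observation is that the homomorphism $\gamma_\xi(e^{it}) = \exp(t\xi)$ satisfies $\mathrm{Ad}(\gamma_\xi(\lambda)^{-1}) X = \lambda^{-k} X$ for every $X \in \mathfrak g^\xi_k$, since $[\xi, X] = \sqrt{-1}\,k\,X$. Consequently, conjugation by $\gamma_\xi^{-1}$ shifts the natural $\lambda$-grading on $\mathfrak u^0_\xi$ by exactly the amount required to convert the $\Lambda^+$-valued $\exp C$ into a $\Lambda^-$-valued $\Phi_-$, and simultaneously to convert the Maurer--Cartan shape $\sum_j \lambda^j \mathfrak g^\xi_{j+1}$ of \eqref{eq-C} into a single $\lambda^{-1}$ monomial with coefficients in a nilpotent finite-dimensional subalgebra --- precisely the shape of a DPW normalized potential.

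For the forward direction of part (1), start from Theorem \ref{thm-finite-uniton0}, absorb the ambiguity $\gamma \in \Omega_{alg} G$ as allowed by Lemma \ref{lemma-es} and the remark following that theorem, and rewrite
\begin{equation*}
\exp C \cdot \gamma_\xi \;=\; \gamma_\xi \cdot \bigl(\gamma_\xi^{-1}\, \exp C \, \gamma_\xi\bigr) \;=\; \gamma_\xi \cdot \Phi_-.
\end{equation*}
The factor $\Phi_-$ lies in $\Lambda^- G^{\mathbb C}$ by the grading shift: every monomial $\lambda^j X$ in $C$ with $X \in \mathfrak g^\xi_k$ and $k > j$ is sent to $\lambda^{j-k} X$ with $j-k \le -1$. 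Since $\gamma_\xi$ is $z$-independent, the Maurer--Cartan form of $\Phi_-$ equals $\mathrm{Ad}(\gamma_\xi^{-1})$ applied to $(\exp C)^{-1} d(\exp C)$, which gives
\begin{equation*}
\Phi_-^{-1}\, d\Phi_- \;=\; \sum_{0 \le j \le r(\xi)-1} \lambda^{j-(j+1)} A_j'\, dz \;=\; \lambda^{-1}\!\!\sum_{0 \le j \le r(\xi)-1}\! A_j'\, dz,
\end{equation*}
which is exactly \eqref{eq-eta}, and the initial condition at $z_0$ reads off immediately from the definition of $\Phi_-$.

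For the converse in part (1), integrate the ODE $\Phi_-^{-1} d\Phi_- = \mathbb A_-$ with the prescribed initial value, then invert the grading shift by setting $\exp C := \gamma_\xi \Phi_- \gamma_\xi^{-1}$; the same argument in reverse places $C$ into $\mathfrak u^0_\xi$ with a Maurer--Cartan form of the type required by Theorem \ref{thm-finite-uniton0}(2). The pointwise Iwasawa decomposition $\exp C \cdot \gamma_\xi = \Phi \cdot \Phi_\xi^+$ then yields the extended solution $\Phi$, and specializing at $\lambda = -1$ gives the harmonic map $\mathbb F$ of finite uniton number. Part (2) proceeds identically using Theorem \ref{thm-finite-uniton1} in place of Theorem \ref{thm-finite-uniton0}; the $\T$-invariance forces $C \in (\mathfrak u^0_\xi)_\T$, which eliminates odd $\lambda$-powers and restricts the surviving indices to $2j+1$. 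To pass from the extended solution to the extended frame picture, use Corollary \ref{cor-Phi-F}, which converts $\Phi(z,\bar z, 1, \lambda)$ into the extended frame $F(z,\bar z,\lambda)$; applying $\gamma_\xi^{-1}(\cdot)\gamma_\xi$ to the $\exp C$ supplied by Theorem \ref{thm-finite-uniton1} then produces the meromorphic frame $F_-$ with initial condition \eqref{eq-initial2}.

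The main obstacle will be reconciling the DPW extended frame's pinning $F(z_0, \bar z_0, \lambda) = e$ with the specific initial value $F_-(z_0) = \gamma_\xi^{-1} \exp C(z_0) \gamma_\xi$ of the meromorphic frame. One must carefully track how the left-multiplication freedom in the Burstall-Guest decomposition of $\Phi$ (Lemma \ref{lemma-es}) interacts with the Iwasawa/Birkhoff splittings on both sides so that this initial value is forced rather than independent data, and to confirm that no further normalization is needed. A secondary technical point is ensuring that the discrete pole set $D'$ of $C$ transfers consistently to the poles of the coefficients $A_j'$ (resp.\ $A_{2j}'$), so that $\Phi_-$ (resp.\ $F_-$) is meromorphic on all of $M$; this follows once one verifies that the singularities introduced by the pointwise Iwasawa step are absorbed entirely into $\Phi_\xi^+$ and do not re-enter $\exp C$.
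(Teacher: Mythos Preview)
Your approach is essentially the same as the paper's. For part (1) you perform exactly the computation the authors carry out: the conjugation identity $\gamma_\xi^{-1} X \gamma_\xi = \lambda^{-k} X$ for $X \in \mathfrak g^\xi_k$ is used to show $\Phi_- \in \Lambda^- G^{\mathbb C}$, and then the Maurer--Cartan form of $\Phi_-$ is obtained by applying $\mathrm{Ad}(\gamma_\xi^{-1})$ to \eqref{eq-C}, collapsing $\sum_j \lambda^j A'_j$ to $\lambda^{-1}\sum_j A'_j$. The converse via inverting the grading shift and invoking Theorem \ref{thm-finite-uniton0}(2) is likewise the paper's route.

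For part (2) your outline is correct but slightly less explicit than the paper at the one point you flag as the ``main obstacle''. The paper does not use Corollary \ref{cor-Phi-F} directly; it uses the $\T$-invariant form \eqref{eq-Phi-F}, $\Phi = \gamma_{\tilde\xi}\, F(z,\bar z,\lambda)\, F(z,\bar z,1)^{-1}$, then \emph{replaces} the frame $F$ by $\tilde F := \gamma_\xi^{-1}\gamma_{\tilde\xi}\,F$ (which is still a $\sigma$-twisted extended frame, at the cost of changing the initial value away from $e$). After this substitution one reads off directly that the Birkhoff factor of the new frame coincides with $\Phi_-$, i.e.\ $F_- = \Phi_-$, and part (1) finishes the argument. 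Your last paragraph correctly anticipates that this reconciliation of initial conditions is where the work lies, but you stop short of writing down the replacement $F \mapsto \gamma_\xi^{-1}\gamma_{\tilde\xi} F$; once you do, the obstacle dissolves and no further normalization is needed.
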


 {Note that part (1) of Theorem \ref{thm-finite-uniton2} is essentially Proposition B1 of Appendix B of \cite{BuGu}.}

\begin{proof}
(1) First we note that by page 557 in \cite{BuGu},
\[ \gamma_{\xi}^{-1}X\gamma_{\xi}=\lambda^{-j-1}X,~ \hbox{ for any element } ~X\in\mathfrak{g}^{\xi}_{j+1}.\]
Together with the definition of $C$  { in Theorem \ref{thm-finite-uniton0},} we have
 { ${\gamma_{\xi}}^{-1} \cdot \exp C \cdot \gamma_{\xi} \in\Lambda^-G^{\mathbb{C}}$.}

Now consider the Maurer-Cartan form of  {$\gamma_{\xi}^{-1} \cdot \exp C \cdot \gamma_{\xi}$.} We have
\begin{equation*}\begin{split}( {\gamma_{\xi}^{-1} \cdot \exp C \cdot \gamma_{\xi})^{-1}\cdot }\mathrm{d}( {\gamma_{\xi}^{-1} \cdot \exp C \cdot \gamma_{\xi}})&=( {\gamma_{\xi}^{-1} \cdot
\exp C \cdot \gamma_{\xi}})^{-1}( {\gamma_{\xi}^{-1} \cdot \exp C \cdot \gamma_{\xi}})_z\mathrm{d}z \\
&=
 \gamma_{\xi}^{-1}\left( \exp C^{-1}(\exp C)_z\right) \gamma_{\xi}\mathrm{d}z,\end{split}\end{equation*}
since $\gamma_{\xi}$ is independent of $z$.

By \eqref{eq-C} in Theorem \ref{thm-finite-uniton0},
\begin{equation*}\label{eq-exp-C} (\exp C)^{-1}(\exp C)_z=\sum_{j=0}^{r(\xi)-1}\lambda^j A'_j \ \ \hbox{ with }\ \ \ A'_j:M \rightarrow \mathfrak{g}^{\xi}_{j+1},\ 0\leq j\leq r(\xi)-1.\end{equation*}
So
\begin{equation*}( {\gamma_{\xi}^{-1} \cdot \exp C \cdot \gamma_{\xi}})^{-1}
( {\gamma_{\xi}^{-1}\cdot \exp C \cdot \gamma_{\xi}})_z=\gamma_{\xi}^{-1} \left( \sum_{1\leq j\leq r(\xi)-1}\lambda^j A'_j\right)   \gamma_{\xi} =\sum_{1\leq j\leq r(\xi)-1} \lambda^j(\gamma_{\xi}^{-1} A'_j  \gamma_{\xi}).
\end{equation*}
By page 557 in \cite{BuGu}, for any element $X\in\mathfrak{g}^{\xi}_{j+1}$, $ \gamma_{\xi}^{-1}X\gamma_{\xi}=\lambda^{-j-1}X$.  Since $ A'_j $ takes values in $\mathfrak{g}^{\xi}_{j+1}$, we have
\[\gamma_{\xi}^{-1} A'_j  \gamma_{\xi}=\lambda^{-j-1}A'_j.\]
As a consequence,
\[( {\gamma_{\xi}^{-1}\cdot \exp C \cdot \gamma_{\xi}})^{-1}( {\gamma_{\xi}^{-1} \cdot
\exp C \cdot \gamma_{\xi}})_z=\sum_{1\leq j\leq r(\xi)-1} \lambda^j(\gamma_{\xi}^{-1} A'_j  \gamma_{\xi})= \lambda^{-1}\sum_{1\leq j\leq r(\xi)-1} A'_j .\]

 Conversely, let  $\Phi_-$ be a solution to \eqref{eq-A-1} as assumed. Then  $\Phi_-$ takes values in
$\Omega_{alg}G$, since $\lambda\mathbb A_-(\frac{\partial}{\partial z})$ takes values in a nilpotent Lie algebra.
{Letting $ \gamma_{\xi}\Phi_- =\Phi (\Phi_+)^{-1}$ } be the Iwasawa decomposition of $\gamma_{\xi}\Phi_-$. It hence produces a harmonic map of finite uniton number.

 (2) First one needs to restrict the above results to the case $A'_{2j+1}=0$ for all $j$ by  Theorem \ref{thm-finite-uniton1}.  By $(1)$ of Theorem \ref{thm-finite-uniton1} we can choose the extended solution associated to the harmonic map $\mathcal{F}$ such that
  $\Phi=\gamma_{\tilde\xi}(\lambda)F(z,\bar{z}, \lambda)h F(z,\bar{z}, 1)^{-1}$ holds. We see that in this case the Iwasawa decomposition of $\Phi$ yields
  $ \Phi_-=\Phi\Phi_+$, whence
   { \[\Phi_-=\Phi\Phi_+=\gamma_{\tilde\xi}(\lambda)F(z,\bar{z}, \lambda)hF(z,\bar{z}, 1)^{-1}\Phi_+,\]
 where the second equality follows  from (\ref{eq-Phi-F}).}
 {Consider $\tilde F(z,\bar z,\lambda):=\gamma_{\xi}(\lambda)^{-1}\gamma_{\tilde\xi}(\lambda)F(z,\bar{z}, \lambda)$. It is also an extended frame of the harmonic map $\mathcal F(z,\bar z,\lambda=1)$, but at the basepoint $z_0$ it might have an initial condition different from the initial condition of $F(z,\bar z, \lambda)$:  $\tilde F(z_0,\bar z_0,\lambda)=\gamma_{\xi}(\lambda)^{-1}\gamma_{\tilde\xi}(\lambda)F(z_0,\bar{z}_0, \lambda)$.} Also note that $\tilde F(z,\bar z,1)=F(z,\bar{z}, 1)$. We can w.l.g. replace $F(z,\bar z,\lambda)$ by $\tilde F(z,\bar z,\lambda)$, since we do not assume any special initial conditions. Then we have
\[\Phi_-=F(z,\bar{z}, \lambda)h F(z,\bar{z}, 1)^{-1}\Phi_+=F_-(z,\lambda)F_+(z,\bar{z}, \lambda)h F(z,\bar{z}, 1)^{-1}\Phi_+.\]
So $F_-(z,\lambda)=\Phi_-$. By (1) of this Theorem, we finish the proof of (2).
 \end{proof}

\begin{remark}\
\begin{enumerate}
\item Note that $\mathbb{A}_-$ in \eqref{eq-initial1} is not the usual normalized potential, since the harmonic map is mapped into the Lie group $G$ instead of $G/K$ and we do not have the initial condition $e$. Also by the discussion in Section 4.1.1 and for convenience we still call it ``normalized potential". For more relations between $\mathbb{A}_-$ and the real normalized potential (embedding $G$ to construct a harmonic map into $G\times G/G$), we also refer the readers to \cite{Do-Es}.

\item The initial condition  {$\Phi_{-0}$ and $F_{-0}$ respectively} of Theorem \ref{thm-finite-uniton2} can be removed by using dressing (see Theorem 1.11 of \cite{Gu2002}). For instance, assume that
$\hat{F}_-^{-1}\mathrm{d}\hat{F}_-=\eta, \  {\hat{F}_-(z_0,\lambda)}=e$. Then $F_-=F_{-0}\hat{F}_-$. By Iwasawa splitting we have
\[F_-=FF_+,\ \   \hat{F}_-=\hat{F}\hat{F}_+.\ \]
Assume that $F_{-0}=\gamma_0\gamma_+$ with $\gamma_0\in\Lambda G,$ $\gamma_+\in\Lambda^+ G^{\mathbb{C}}$. Therefore we have
\[FF_+=\gamma_0\gamma_+\hat{F}\hat{F}_+.\]
As a consequence, we obtain
\begin{equation*} \gamma_+\hat{F}=\gamma_0^{-1}FF_+\hat{F}_+^{-1}.\end{equation*}
Hence
\begin{equation}\gamma_+\sharp\hat{F}=\gamma_0^{-1}F.\end{equation}
So up to a rigid motion $\gamma_0^{-1}$, $F$ is the dressing of $\hat{F}$ by $\gamma_+$ (compare with Corollary \ref{cor-finite-dress}).
\end{enumerate}
  \end{remark}

%%%%%%%%%%%%%%%%%%%%%%%%%%%%%%%%%%%%%%
\subsection{On the initial conditions of normalized meromorphic frames: compact case}

In the last theorem we have given a precise relation between certain ``meromorphic potentials" and harmonic maps of finite uniton type. For this one needs very specific initial conditions and  { very specific dressing matrices} respectively.
 { In the case of the last theorem the initial conditions occurring are very  complicated and very difficult to produce. So it is important and useful to show that all harmonic maps of finite uniton type into compact inner symmetric spaces can be determined by the data as given, but with initial condition $e$, which is the goal of this subsection}.

To this end, we need some preparations.
First, for an arbitrary element  {$\xi \in \mathcal{I}'$} we have the following decompositions
 {(see e.g. \eqref{defgjxi}):}
\begin{equation}\label{eq-pq-decom-g}
  { \mathfrak{g}^{\C}=\sum_j  \mathfrak{g}^{\xi}_j=\left(\sum_{j\geq0} \mathfrak{g}^{\xi}_j\right)\oplus\left(\sum_{j<0} \mathfrak{g}^{\xi}_j\right)=\mathfrak{pr}\oplus\mathfrak{q}.}
\end{equation}
On the Lie group level, let $\mathds{PR}$ be the  {complex connected} Lie subgroup of $G^{\C}$ with Lie algebra
$\mathfrak{pr}$ and  let $\mathds{Q}$ denote the  {complex connected} Lie subgroup of $G^{\C}$ with Lie algebra $\mathfrak{q}$.
Let $\mathfrak{W}$ denote the Weyl group of $G$. Then we have the decomposition
\begin{equation}\label{eq-pq-decom-G}
    G^{\C}=\bigcup_{\omega\in \mathfrak{W_{\xi}}}\mathds{PR}\cdot\omega\cdot\mathds{Q}.
\end{equation}
where $\mathfrak{W_{\xi}}$ is  {some quotient of $\mathfrak{W}$.} This follows from Corollary 3.2.3 of \cite{Do-Gr-Sz}.

\begin{theorem} \label{thm-PR-Q-decomposition}
 {The set $\mathds{PR} \cdot \mathds{Q} \subset G^{\C}$ obtained by pointwise multiplication is open in $G^\C$ and the pointwise multiplication map $ \mathds{PR}\times\mathds{Q} \longrightarrow  \mathds{PR}\cdot\mathds{Q}$ is biholomorphic.}
 \end{theorem}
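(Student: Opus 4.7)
The plan is to show that the multiplication map $\mu:\mathds{PR}\times\mathds{Q}\to G^{\C}$, $(p,q)\mapsto pq$, is a globally injective holomorphic local biholomorphism. Since an injective local biholomorphism onto its image is automatically a biholomorphism, and a local biholomorphism is an open map, the theorem will follow at once.

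First I would establish that $\mu$ is everywhere a local biholomorphism. The direct sum $\mathfrak{g}^{\C}=\mathfrak{pr}\oplus\mathfrak{q}$ from \eqref{eq-pq-decom-g} gives $\dim(\mathds{PR}\times\mathds{Q})=\dim\mathfrak{pr}+\dim\mathfrak{q}=\dim G^{\C}$, and at $(e,e)$ the differential $(X,Y)\mapsto X+Y$ is a linear isomorphism, again by the direct sum. The inverse function theorem then yields a local biholomorphism near $(e,e)$. For arbitrary $(p,q)\in\mathds{PR}\times\mathds{Q}$ I would use the equivariance
\[
\mu\circ(L_{p}\times R_{q})=L_{p}R_{q}\circ\mu,
\]
where $L_{p}\times R_{q}$ is a biholomorphism of $\mathds{PR}\times\mathds{Q}$ and $L_{p}R_{q}$ is a biholomorphism of $G^{\C}$. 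This transports the local biholomorphism property from $(e,e)$ to every base point, and in particular shows $\mathds{PR}\cdot\mathds{Q}$ is open in $G^{\C}$.

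Next I would prove global injectivity. The identity $p_{1}q_{1}=p_{2}q_{2}$ is equivalent to $p_{2}^{-1}p_{1}=q_{2}q_{1}^{-1}\in \mathds{PR}\cap\mathds{Q}$, so injectivity of $\mu$ amounts to $\mathds{PR}\cap\mathds{Q}=\{e\}$. The vanishing $\mathfrak{pr}\cap\mathfrak{q}=\{0\}$ immediately gives that this intersection is discrete in both groups. For the upgrade to triviality I would exploit that, with respect to the grading induced by the canonical element $\xi$, $\mathfrak{pr}$ contains $\mathfrak{t}^{\C}$ and all positive-grade root spaces, so $\mathds{PR}$ is a standard parabolic subgroup of the simply-connected semisimple group $G^{\C}$, while $\mathds{Q}=\exp\mathfrak{q}$ is the unipotent radical of the opposite parabolic. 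Standard structure theory for algebraic groups—precisely the input behind the Bruhat-type decomposition \eqref{eq-pq-decom-G} established in Corollary 3.2.3 of \cite{Do-Gr-Sz}, of which $\mathds{PR}\cdot\mathds{Q}$ is the open cell indexed by $\omega=e$—then forces $\mathds{PR}\cap\mathds{Q}=\{e\}$.

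The hard part will be this last injectivity step: infinitesimal information only gives discreteness, and trivializing the intersection is a genuinely global statement about how the parabolic $\mathds{PR}$ and the opposite unipotent radical $\mathds{Q}$ sit inside $G^{\C}$. Once injectivity is in hand, combining it with the local biholomorphism from the previous step yields that $\mu:\mathds{PR}\times\mathds{Q}\to\mathds{PR}\cdot\mathds{Q}$ is a bijective holomorphic map with everywhere nonsingular differential, hence a biholomorphism onto the open set $\mathds{PR}\cdot\mathds{Q}$.
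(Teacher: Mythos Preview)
Your argument is sound: the inverse-function-theorem plus equivariance step gives the open-map/local-biholomorphism claim directly, and your identification of the crucial global issue ($\mathds{PR}\cap\mathds{Q}=\{e\}$) is correct and follows from the parabolic/opposite-unipotent-radical structure you describe. One small slip: you speak of ``the canonical element $\xi$'', but in the setup preceding the theorem $\xi$ is an arbitrary element of $\mathcal{I}'$; this does not affect the argument, since any such $\xi$ still induces a grading for which $\mathds{PR}$ is a standard parabolic and $\mathds{Q}$ the unipotent radical of the opposite parabolic.

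The paper itself does not argue any of this out: it simply cites Theorem~2.4.1(a) of \cite{Do-Gr-Sz} for openness and Theorem~2.4.1(b) of the same reference for the biholomorphism. Your route is more explanatory for the local part---the equivariance trick is elementary and avoids any black box---while for the global injectivity you end up invoking the same body of structure theory that underlies \cite{Do-Gr-Sz}, so the two arguments converge there. The practical gain of your approach is that a reader sees exactly where the genuinely global input is required; the paper's version is shorter but entirely delegated to the external reference.
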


\begin{proof} By Theorem 2.4.1 (a) of \cite{Do-Gr-Sz}, $ \mathds{PR}\cdot\mathds{Q}$ is open in $G^{\C}$. By Theorem 2.4.1 (b) of \cite{Do-Gr-Sz},
  $ \mathds{PR}\times\mathds{Q}\longrightarrow  \mathds{PR}\cdot\mathds{Q}$ is a holomorphic diffeomorphism.
\end{proof}

We also need the following  { lemma:}
 \begin{lemma} \label{lemma-}  {Let $\xi \in \mathcal{I}'$ so that $h=\exp(\pi\xi)$. Then we obtain}
 \begin{equation}\label{eq-pr-p}
    \mathfrak{pr}\cap \mathfrak{p}^{\C}= \sum_{j\geq0}g^{\xi}_{2j+1}.
 \end{equation}
 \end{lemma}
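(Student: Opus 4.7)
The plan is to reduce the claim to a direct eigenvalue computation of $\mathrm{Ad}(h)$ on the grading $\mathfrak{g}^\C = \bigoplus_j \mathfrak{g}^\xi_j$. Since $\sigma(g) = hgh^{-1}$, the Cartan decomposition $\mathfrak{g}^\C = \mathfrak{k}^\C \oplus \mathfrak{p}^\C$ of the inner symmetric space is precisely the decomposition into the $(+1)$ and $(-1)$ eigenspaces of $\mathrm{Ad}(h)$. So the key observation to verify is simply how $\mathrm{Ad}(h)$ acts on each piece $\mathfrak{g}^\xi_j$.

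First I would compute: for any $X \in \mathfrak{g}^\xi_j$, one has $[\xi, X] = \sqrt{-1}\,j\,X$ by the defining property of $\mathfrak{g}^\xi_j$ (see \eqref{defgjxi}), hence
\begin{equation*}
\mathrm{Ad}(h)X \;=\; \mathrm{Ad}(\exp(\pi\xi))X \;=\; \exp(\pi\,\mathrm{ad}\,\xi)X \;=\; e^{\sqrt{-1}\pi j}\,X \;=\; (-1)^j X.
\end{equation*}
This immediately gives the global identifications
\begin{equation*}
\mathfrak{k}^\C \;=\; \bigoplus_{j\ \mathrm{even}} \mathfrak{g}^\xi_j, \qquad \mathfrak{p}^\C \;=\; \bigoplus_{j\ \mathrm{odd}} \mathfrak{g}^\xi_j.
\end{equation*}

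Second, I would intersect with $\mathfrak{pr} = \sum_{j \geq 0} \mathfrak{g}^\xi_j$. Because the sum $\mathfrak{g}^\C = \bigoplus_j \mathfrak{g}^\xi_j$ is direct, intersecting a sum of eigenspaces with another sum of eigenspaces is trivial: one keeps only the summands belonging to both. Thus
\begin{equation*}
\mathfrak{pr} \cap \mathfrak{p}^\C \;=\; \bigoplus_{\substack{j \geq 0 \\ j\ \mathrm{odd}}} \mathfrak{g}^\xi_j \;=\; \sum_{j \geq 0} \mathfrak{g}^\xi_{2j+1},
\end{equation*}
which is exactly \eqref{eq-pr-p}.

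There is essentially no obstacle here; the only point worth noting carefully is that $\xi$ is canonical (so $\theta(\xi) \in \{0, \sqrt{-1}\}$ for simple roots), which guarantees that $\mathrm{ad}\,\xi$ has eigenvalues in $\sqrt{-1}\,\mathbb{Z}$ and hence that $\exp(\pi\,\mathrm{ad}\,\xi)$ acts as $\pm 1$ according to parity. The identification $h = \exp(\pi\xi)$ provided in the hypothesis is exactly what is needed to match the involution $\mathrm{Ad}(h)$ of the symmetric space with the parity of the $\xi$-grading.
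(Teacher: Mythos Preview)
Your argument is correct and matches the paper's proof essentially line for line: both compute $\mathrm{Ad}(h)X = e^{\sqrt{-1}\pi j}X = (-1)^j X$ on $\mathfrak{g}^\xi_j$ to identify $\mathfrak{p}^\C$ with the odd-indexed summands, then intersect with $\mathfrak{pr} = \sum_{j\ge 0}\mathfrak{g}^\xi_j$. One small correction to your closing remark: canonicality of $\xi$ is not needed here---membership in the integer lattice $\mathcal{I}'$ already guarantees $\theta(\xi)\in\sqrt{-1}\,\mathbb{Z}$ for all roots $\theta$, which is all that the parity computation uses.
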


\begin{proof}
 {Recall that $\mathfrak{g}=\mathfrak{k}\oplus\mathfrak{p}$
	with $\mathfrak{k}=Lie(K)$, and $\mathfrak{g}^{\C}=\mathfrak{k}^{\C}\oplus\mathfrak{p}^{\C}$.}
We have that $\mathfrak{k}=\{~X\in\mathfrak{g}~|~hX=Xh\ \},~~\mathfrak{k}^{\C}=\{~X\in\mathfrak{g}^{\C}~|~hX=Xh\ \}.$
For any element $X$ in $g^{\xi}_{2j}$,
\[hXh^{-1}=\exp( \pi\xi)\cdot X\cdot\exp(-\pi\xi)=\exp(\pi \cdot ad\xi) X=e^{2 j\pi  \sqrt{-1} }X=X.\]
Similarly, for any element $X$ in $g^{\xi}_{2j+1}$,
\[hXh^{-1}=\exp( \pi\xi)\cdot X\cdot\exp(-\pi\xi)=\exp( \pi\cdot ad\xi) X=e^{(2j+1)\pi \sqrt{-1}}X=-X.\]
\end{proof}

 {Next we will apply Theorem \ref{thm-finite-uniton2} and Theorem \ref{thm-PR-Q-decomposition} to show that for a harmonic map $\mathcal F$ an extended frame with initial condition $e$, Theorem \ref{thm-finite-uniton2} still holds.}

 \begin{theorem} \label{thm-finite-uniton-in}
Let G be a connected, semisimple, compact Lie group with trivial center. Let $\mathcal{F}:M\rightarrow G/\hat{K}$ be a harmonic map of
 finite uniton type  into the compact inner symmetric space $G/\hat{K}$.  {Let $z_0 \in M$ and $F(z, \bar z, \lambda)$ an extended frame of $\mathcal{F}$ satisfying
  $F(z_0, \bar z_0, \lambda) = e$ for all $\lambda \in S^1$.  Then there exists some canonical $\xi_{can}\in \mathcal{I}'$,
some discrete subset $D'\subset M$, such that
$\mathfrak{C}_h(G/\hat{K})=\{ghg^{-1}| g\in G\}$ with $h=\exp (\pi \xi_{can})$,
and  the normalized potential of $\mathcal{F}$ has the form
\begin{equation}\eta=F_-^{-1}\mathrm{d}F_-=\lambda^{-1}\sum_{0\leq 2j\leq r(\xi_{can})-1} A_{2j}'\mathrm{d}z, \end{equation}}
 {where $A_{2j}':M\rightarrow \mathfrak{g}^{\xi_{can}}_{2j+1}$ is a meromorphic function with poles in $D'$ for each $j$}. In particular, the normalized potential of $ \mathcal{F}$ is contained in a nilpotent {Lie algebra
 and  it is invariant} under the fundamental group of $M$.
 \end{theorem}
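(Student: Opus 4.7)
The plan is to reduce the assertion directly to Theorem \ref{thm-finite-uniton2}(2), by choosing the auxiliary element $\tilde{\xi}$ from Proposition \ref{eq-check-xi} to coincide with the canonical element $\xi_{can}$ of Theorem \ref{thm-finite-uniton1}. By Theorem \ref{thm-finite-uniton1}, $h$ is conjugate (in $G$) to $\exp(\pi \xi_{can})$; after an initial normalization by a suitable element of $G$ (which preserves the condition $F(z_0,\bar z_0,\lambda)=e$, since conjugating $e$ leaves it unchanged), we may assume $h=\exp(\pi\xi_{can})$. With this, $\tilde\xi=\xi_{can}$ satisfies the hypothesis $\exp(\pi\tilde\xi)=h$ of Proposition \ref{eq-check-xi}, and the loop $\gamma_{\xi_{can}}^{-1}\gamma_{\tilde\xi}$ appearing in the proof of Theorem \ref{thm-finite-uniton2}(2) becomes the identity. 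Consequently the ``modified extended frame'' $\tilde F$ of that proof coincides with the given $F$.

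Under this identification, Theorem \ref{thm-finite-uniton2}(2) delivers directly the Burstall--Guest form
\[
F_-=\gamma_{\xi_{can}}^{-1}\exp C\,\gamma_{\xi_{can}},
\]
with $C:M\to (\mathfrak u^0_{\xi_{can}})_T$ meromorphic and poles in some discrete $D'\subset M$, and normalized potential $\eta=F_-^{-1}dF_-=\lambda^{-1}\sum_{0\le 2j\le r(\xi_{can})-1}A'_{2j}(z)\,dz$ with $A'_{2j}: M\to\mathfrak g^{\xi_{can}}_{2j+1}$ meromorphic. The assumption $F(z_0,\bar z_0,\lambda)=e$, combined with the uniqueness of the Birkhoff decomposition, forces $F_-(z_0)=e$, hence $\exp C(z_0)=e$; since $\mathfrak u^0_{\xi_{can}}$ is nilpotent and $\exp$ is injective there, we conclude $C(z_0)=0$, so in particular $z_0\notin D'$. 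For the nilpotent-containment assertion, Lemma \ref{lemma-} identifies $\bigoplus_{j\geq 0}\mathfrak g^{\xi_{can}}_{2j+1}=\mathfrak{pr}\cap \mathfrak p^\C$, which lies in the nilpotent Lie subalgebra $\bigoplus_{k\ge 1}\mathfrak g^{\xi_{can}}_k\subset\mathfrak{pr}$ (nilpotent because $[\mathfrak g^{\xi_{can}}_a,\mathfrak g^{\xi_{can}}_b]\subset\mathfrak g^{\xi_{can}}_{a+b}$ and the grading is bounded by $r(\xi_{can})$). Invariance of $\eta$ under $\pi_1(M)$ follows because $\mathcal F$, and hence the algebraic data $(C, A'_{2j})$ extracted from it through Birkhoff decomposition, descend to $M$ from the universal cover.

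The most delicate point is verifying that the initial conjugation normalization is compatible with the twisted loop-group structure $\Lambda G_\sigma$ and with the $\mathfrak{pr}\oplus\mathfrak q$ grading relevant for Burstall--Guest. This is where Theorem \ref{thm-PR-Q-decomposition} enters: the biholomorphic pointwise decomposition $\mathds{PR}\times\mathds{Q}\to\mathds{PR}\cdot\mathds{Q}\subset G^\C$ guarantees that conjugating through $\gamma_{\xi_{can}}^{-1}(\cdot)\gamma_{\xi_{can}}$ sends the unipotent factor $\exp(\mathfrak u^0_{\xi_{can}})$ into $\Lambda^-_*G^\C$ and preserves the structural form of $F_-$ under the adjustment, so that the argument of the second paragraph carries through without loss of the prescribed nilpotent structure.
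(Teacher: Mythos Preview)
Your reduction has a genuine gap. You claim that setting $\tilde\xi=\xi_{can}$ makes the modified frame $\tilde F$ in the proof of Theorem \ref{thm-finite-uniton2}(2) coincide with $F$, so that (2.18) gives $F_-=\gamma_{\xi_{can}}^{-1}\exp C\,\gamma_{\xi_{can}}$ directly for the frame with $F(z_0,\bar z_0,\lambda)=e$. But Theorem \ref{thm-finite-uniton2}(2) explicitly does \emph{not} fix an initial condition: its proof replaces $F$ by $\tilde F$ precisely because ``we do not assume any special initial conditions,'' and the resulting $F_-$ comes with the initial value (2.21), $F_-(z_0)=\gamma_{\xi}^{-1}\exp C(z_0)\,\gamma_{\xi}$, which is generically not $e$. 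More concretely, you have silently discarded the left factor $\gamma\in\Omega_{alg}G$ in Theorem \ref{thm-finite-uniton1}(1): the decomposition there is $\gamma\Phi=\exp C\cdot\gamma_{\xi_{can}}\cdot(\Phi^+_{\xi_{can}})^{-1}$, and nothing forces $\gamma=e$ for your particular $\Phi=\gamma_{\xi_{can}}F(\cdot,\lambda)F(\cdot,1)^{-1}$. Your deduction ``$F(z_0)=e\Rightarrow F_-(z_0)=e\Rightarrow C(z_0)=0$'' is circular, since it presupposes the identity $F_-=\gamma_{\xi_{can}}^{-1}\exp C\,\gamma_{\xi_{can}}$ that is in question.

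The paper's argument confronts exactly this obstruction. It invokes Proposition 4.1 of \cite{BuGu} (not Theorem \ref{thm-finite-uniton1}) to get a decomposition $\gamma_{\hat\xi}^{-1}\Phi=\hat F_-W_+$ with \emph{no} extra $\gamma$-factor, at the price of a possibly non-canonical $\hat\xi$. This yields a nice $\hat F_-$ for the frame $\hat F=\gamma_{\hat\xi}^{-1}\gamma_{\tilde\xi}F$, whose initial value $\gamma_{\hat\xi}^{-1}\gamma_{\tilde\xi}$ is in general not $e$. The real work is then to compare the Birkhoff factor $F_-$ of the given $F$ (with $F(z_0)=e$) to $\hat F_-$: one writes $F=\gamma_{\tilde\xi}^{-1}\gamma_{\hat\xi}\hat F_-\hat V_+$, decomposes $\hat V_+=\mathbf R\,\omega\,\mathbf Q$ via \eqref{eq-pq-decom-G}, and uses Theorem \ref{thm-PR-Q-decomposition} together with $F\to e$ as $z\to z_0$ to force $\omega=e$ near $z_0$; since all remaining factors lie in $\mathds{PR}$, so does $F_-$, whence $\eta_{-1}\in\mathfrak{pr}\cap\mathfrak p^{\C}$ by Lemma \ref{lemma-}. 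That is the actual role of Theorem \ref{thm-PR-Q-decomposition}; your third paragraph assigns it a different, vague function that does not address the initial-condition mismatch.
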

 \begin{proof} {
 By  Proposition \ref{eq-check-xi}, there exists some $\tilde\xi\in\mathcal I'$ such that
$\Phi=\gamma_{\tilde\xi} F(z,\bar{z},  \lambda)F(z,\bar{z}, 1)^{-1}$ is  an extended solution  of $\mathbb F(z,\bar z,1)=F(z, \bar z, 1)hF(z, \bar z, 1)^{-1}$  which is ${\T}-$invariant and satisfies $\exp(\pi\tilde\xi)=h$. Then by Proposition 4.1 of \cite{BuGu}, there exists some $\hat\xi\in\mathcal I'$ which may not be canonical, and some discrete subset $D'\subset M$ such that on $M\backslash D'$ we have
\[\gamma^{-1}_{\hat\xi}\Phi=\gamma^{-1}_{\hat\xi} \cdot \exp C \cdot \gamma_{\hat\xi}\cdot W_+(z,\bar{z},\lambda)=\hat{F}_- (z,\lambda) W_+(z,\bar{z},\lambda), \text{where } \hat{F}_-(z,\lambda)= \gamma^{-1}_{\hat\xi}
\cdot \exp C \cdot \gamma_{\hat\xi},\]
with $C:M\rightarrow(\mathfrak{u}^0_{\hat\xi})_{\T}$ being meromorphic with poles in $D'$ and
{$W_+: M \rightarrow \Lambda^{+}G^{\C}$.} Note that {$\gamma_{\hat\xi}\in(\Omega_{alg}G)_{\T}$}
since both $\Phi$ and $\gamma_{\tilde\xi} F(z,\bar{z},  \lambda)F(z,\bar{z}, 1)^{-1}$ are contained in $(\Omega_{alg}G)_{\T}$. In particular, $\exp(\pi\hat\xi)=h$ and both of $\gamma_{\tilde\xi}^{-1}$ and $\gamma_{\hat\xi}$  take values in $\mathds{PR}$,  {where we define $\mathds{PR}$ by $\hat\xi$}. }

 {Let \[\hat{F}(z,\bar{z},\lambda)=\gamma_{\hat\xi}^{-1}\gamma_{\tilde\xi}F(z,\bar{z},\lambda).\]
Since all of $\gamma_{\xi}$, $\gamma_{\tilde\xi}$ and $F(z,\bar{z},\lambda)$ are $\sigma-$twisted, we see that $\hat{F}(z,\lambda)$ is $\sigma-$twisted and hence is an extended frame of $\mathcal F$ with different initial condition  {$\hat{F}(z_0,\bar{z}_0,\lambda)=\gamma_{\hat\xi}(\lambda)^{-1}\gamma_{\tilde\xi}(\lambda)$.}
Moreover, since $ \hat{F}_-(z,\lambda)=  {\gamma^{-1}_{\hat\xi} \cdot \exp C \cdot \gamma_{\hat\xi}}$ takes values in $\Lambda^-G^{\mathbb C}_{\sigma}$, we have the Birkhoff decomposition of $\hat F(z,\bar{z},\lambda)$:
\[\hat{F}=\gamma^{-1}_{\xi}\Phi(z,\bar{z},\lambda) F(z,\bar{z},1)=\hat{F}_-(z,\lambda) \hat{ V}_+(z,\bar{z},\lambda) \ \hbox{ with }\ \hat{ V}_+(z,\bar{z},\lambda)=W_+(z,\bar{z},\lambda)F(z,\bar{z},1).\]
Now near $z_0$, we also have the Birkhoff decomposition of $F(z,\bar{z},\lambda)$:
\[F(z,\bar{z},\lambda)=F_-(z,\lambda)V_+
=\gamma_{\tilde\xi}^{-1}\gamma_{\hat\xi}\hat{F}(z,\bar{z},\lambda)
=\gamma_{\tilde\xi}^{-1}\gamma_{\hat\xi}\hat{F}_-(z,\lambda)\hat{ V}_+(z,\bar{z},\lambda).\]
Decompose $\hat{ V}_+(z,\bar{z},\lambda)$ according to \eqref{eq-pq-decom-G}:
\[\hat{ V}_+(z,\bar{z},\lambda)=\mathbf{R}\omega \mathbf{Q}~~ \hbox{ with }~ \mathbf{R}\in\mathds{PR} \hbox{ and } \mathbf{Q}\in\mathds{Q}.\]
By Theorem \ref{thm-PR-Q-decomposition}, since $\hat V_+$ is holomorphic in $\lambda$ for all $\lambda\in\mathbb C$, $\mathbf{R}$ and  $\mathbf{Q}$ are also holomorphic in $\lambda$ for all $\lambda\in\mathbb C$, i.e., $\mathbf{R}, \mathbf{Q}\in\Lambda^+G^{\C}$.}

 {Since  $F(z,\bar{z},\lambda)\rightarrow e$ as $z\rightarrow z_0$, we obtain \[\gamma_{\tilde\xi}^{-1}\gamma_{\hat\xi}\hat{F}_-(z,\lambda) \mathbf{R}\omega \mathbf{Q}\rightarrow e, \hbox{ if $z\rightarrow z_0$}.\] Note that all of
$\gamma_{\tilde\xi}^{-1}$, $\gamma_{\hat\xi}$ and $\hat{F}_-(z,\lambda)$ take values in $\mathds{PR}$. Since $\mathds{PR}\cdot\mathds{Q}$ is open and $e\in\mathds{PR}\cdot\mathds{Q}$, $\omega=e$  near $z_0$. As a consequence, when $z\rightarrow z_0$,  $\gamma_{\tilde\xi}^{-1}\gamma_{\hat\xi}\hat{F}_-(z,\lambda) \mathbf{R}\rightarrow e$ and $\mathbf{Q}\rightarrow e$. Moreover, considering $F_-$, we obtain
\[F_-=(\gamma_{\tilde\xi}^{-1}\gamma_{\hat\xi}\hat{F}_-(z,\lambda) \mathbf{R})_-.\]
Since $\mathbf{R}\in\mathds {PR}$ and all of
$\gamma_{\tilde\xi}^{-1}$, $\gamma_{\hat\xi}$ and $\hat{F}_-(z,\lambda)$ also take values in $\mathds{PR}$, we see that $F_-$ also takes values in $\mathds{PR}$.
Consider \[\eta_-=F_-^{-1}\mathrm{d}F_-=\lambda^{-1}\eta_{-1}\mathrm{d}z.\] Since $F_-$ takes values in $\mathds{PR}$, $\eta_{-1}$ takes values in $\mathfrak{pr}$. On the other hand, $\eta_{-1}$ also takes values in $\mathfrak{p}^{\C}$. In a sum, by \eqref{eq-pr-p}, $\eta_{-1}$ takes values in \[\mathfrak{pr}\cap\mathfrak{p}^{\C}=\sum_{0\leq 2j\leq r(\hat\xi)-1} \mathfrak{g}^{\hat\xi}_{2j+1}.\]  Let $\xi_{can}$ be the canonical element derived from $\hat\xi$ as in Lemma \ref{lemma-xi}. By Lemma \ref{lemma-xi} we have
\[\sum_{0\leq j\leq r(\hat\xi)-1} \mathfrak{g}^{\hat\xi}_{j+1}=\sum_{0\leq j\leq r(\xi_{can})-1}  \mathfrak{g}^{\xi_{can}}_{j+1}.\]
From the proof of Theorem 5.4 of \cite{BuGu}, we also obtain
\[ \sum_{0\leq 2j\leq r(\hat\xi)-1} \mathfrak{g}^{\hat\xi}_{2j+1}=\sum_{0\leq 2j\leq r(\xi_{can})-1}  \mathfrak{g}^{\xi_{can}}_{2j+1}.\]
 So   $\eta_{-1}$ takes values in $\sum_{0\leq 2j\leq r(\xi_{can})-1}  \mathfrak{g}^{\xi_{can}}_{2j+1}$, which is contained in the same nilpotential Lie subalgebra as $\hat\eta_{-1}=\hat F_{-}^{-1}(\hat F_{-})_{z}$. Since $F(z_0,\bar{z}_0,\lambda)=e$, we have $F_{-}(z_0,\lambda)=e$ as well. }
  \end{proof}

  %%%%%%%%%%%%%%%%%%%%%%%%%
\section{Harmonic maps of finite uniton type into non-compact inner symmetric spaces}

 {In the last section we have shown how one can relate the construction schemes of Burstall-Guest and DPW respectively of harmonic maps into compact inner symmetric spaces into each other. In this section, we will show that the DPW interpretation of the theory of Burstall and Guest \cite{BuGu} also holds for harmonic maps of finite uniton type into non-compact inner symmetric spaces. We will also review briefly the application of this theory to the coarse} classification of Willmore surfaces \cite{Wang-2}.

%%%%%%%%%%%%%%%%%%
\subsection{ {The case of non-compact inner symmetric spaces}}
 {We will apply the results of  Section 3.3. Let $G/K$ is a non-compact inner symmetric space} and $U$  a maximal compact subgroup of the complexification $G^\C$ of $G$ which is left invariant by the natural real form involution of $G$ and the extension of $\sigma$ to $G^\C$. Combining Theorem \ref{thm-noncompact}, Theorem \ref{thm-finite-uniton2},   {Theorem \ref{thm-finite-uniton-in}} and Corollary \ref{cor-finite}, we obtain
\begin{theorem}\label{thm-finite-uniton-n-com}
Let $\mathcal{F}: \tilde{M}\rightarrow G/K$  be a harmonic map of finite uniton type, and
 ${\mathcal{F}_ {U}:} \tilde{M} \rightarrow U/(U\cap K^{\C})$ the compact dual harmonic map of $\mathcal{F}$, with base point $z_0\in\tilde{M}$  {such that $\mathcal{F}_{z=z_0}=eK$ and $(\mathcal{F}_{U})_{z=z_0}=e(U\cap K^{\C})$.}  {Then the normalized potential $\eta_-$ and the normalized extended framing  $F_-$ derived from  Theorem \ref{thm-finite-uniton-in} for
 $\mathcal{F}_U$,  provides also a normalized potential $\eta_-$ and a normalized extended framing $F_-$  of $\mathcal{F}$, with initial condition $F_-(z_0,\lambda)=e$.} {In particular, all harmonic maps of finite uniton type $\mathcal{F}: \tilde{M}\rightarrow G/K$  can be obtained in this way.}
\end{theorem}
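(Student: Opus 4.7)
The plan is to reduce the non-compact case directly to the compact case via the duality theorem (Theorem \ref{thm-noncompact} of \cite{DoWa13}), and then invoke Theorem \ref{thm-finite-uniton-in} on the compact dual. The whole argument should split into two directions: starting from a given $\mathcal{F}: \tilde{M}\rightarrow G/K$ of finite uniton type and producing the stated normalized data, and conversely verifying that every normalized potential of the form given by Theorem \ref{thm-finite-uniton-in} produces, through the non-compact DPW recipe, a harmonic map of finite uniton type into $G/K$.

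For the forward direction, I would start by choosing a meromorphic extended frame $F$ of $\mathcal{F}$ with $F(z_0,\bar z_0,\lambda)=e$; this is possible because the DPW framework on the universal cover $\tilde M$ allows us to fix any initial condition at the chosen base point $z_0$. The duality theorem (Theorem \ref{thm-noncompact}) then tells us that $\mathcal{F}_U$ can be constructed from the same meromorphic data: $\mathcal{F}$ and $\mathcal{F}_U$ share the same (locally defined) normalized potential $\eta_-$ and the same Birkhoff factor $F_-$, the only change being that the final Iwasawa step is performed with respect to the compact real form $U$ rather than the non-compact form $G$. In particular, $\mathcal{F}$ is of finite uniton type if and only if $\mathcal{F}_U$ is of finite uniton type (this is Corollary \ref{cor-finite}). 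Hence Theorem \ref{thm-finite-uniton-in} applies to $\mathcal{F}_U$ and yields a canonical $\xi_{can}\in\mathcal{I}'$, a discrete set $D'\subset \tilde M$, and a normalized potential
\begin{equation*}
\eta_-=\lambda^{-1}\sum_{0\le 2j\le r(\xi_{can})-1} A_{2j}'(z)\,\mathrm{d}z,\qquad A_{2j}':\tilde M\to \mathfrak{g}^{\xi_{can}}_{2j+1}
\end{equation*}
with $F_-(z_0,\lambda)=e$. Because $\eta_-$ and the initial condition are exactly the DPW data of $\mathcal{F}$, they serve as the normalized potential and normalized extended frame of $\mathcal{F}$ itself.

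For the converse direction, I would begin with an arbitrary meromorphic $\eta_-$ of the above form together with the initial condition $F_-(z_0,\lambda)=e$, solve the ODE $F_-^{-1}\mathrm{d}F_-=\eta_-$ globally on $\tilde M\setminus D'$, and then perform the two different real-form Iwasawa decompositions: the global compact one to obtain an extended frame of some harmonic map $\mathcal{F}_U$ into $U/(U\cap K^{\C})$ (which is of finite uniton type by the converse part of Theorem \ref{thm-finite-uniton-in} together with the compact statement of Theorem \ref{thm-finite-uniton2}), and the local non-compact Iwasawa decomposition to obtain an extended frame for a harmonic map $\mathcal{F}:\tilde M\to G/K$. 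The duality theorem identifies this $\mathcal{F}$ as the non-compact dual of $\mathcal{F}_U$, and the initial condition $F_-(z_0,\lambda)=e$ together with the normalization forces $\mathcal{F}(z_0)=eK$. That $\mathcal{F}$ has finite uniton type follows from $\mathcal{F}_U$ having finite uniton number via Proposition \ref{typeequivnumber} transported through duality.

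The main obstacle I anticipate is that the non-compact Iwasawa decomposition is only local, so one must verify that the compact-side statements of Theorem \ref{thm-finite-uniton-in} (in particular the statement that $F_-$ actually takes values in the unipotent group $\mathds{PR}$, and that $F_-$ is defined on all of $\tilde M\setminus D'$) transfer correctly through the duality. Concretely, one needs to confirm that the decomposition $F_-=F\cdot F_+$ still makes sense on the (possibly smaller) open subset where the non-compact Iwasawa splitting exists, and that the resulting $\mathcal{F}$ is defined on all of $\tilde M$ and not just on this Iwasawa-admissible subset. This should follow from the fact that $F_-$ takes values in the finite-dimensional unipotent subgroup given by Theorem \ref{thm-finite-uniton-in}, so the local Iwasawa factorization extends globally whenever the compact one does; however, this point must be checked carefully, and it is the only place where the argument is not a direct rewriting of the compact case.
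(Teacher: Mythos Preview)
Your proposal is correct and follows essentially the same approach as the paper. In fact, the paper gives no detailed proof at all: it simply states that the theorem is obtained by ``combining Theorem \ref{thm-noncompact}, Theorem \ref{thm-finite-uniton2}, Theorem \ref{thm-finite-uniton-in} and Corollary \ref{cor-finite}'', which is exactly the chain of reductions you spell out. Your discussion of the local nature of the non-compact Iwasawa decomposition is a legitimate technical concern, but the paper does not address it either; it is implicitly absorbed into the statement of the duality theorem and equation \eqref{eq-norm-framing}, which identify $F_{\tilde G,-}=F_{\tilde U,-}$ and hence the normalized potentials, independently of where the non-compact Iwasawa splitting is defined.
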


 {\begin{remark}\
\begin{enumerate}
\item By Theorem \ref{thm-finite-uniton-n-com}, on the (nilpotent) normalized potential level, one can classify  all harmonic maps $\mathcal{F}: \tilde{M}\rightarrow G/K$ of finite uniton type by classifying  all harmonic maps of finite uniton type from $\tilde{M}$ to $U/(U\cap K^{\C})$.
\item Note that the DPW method yields a 1-1 relation between normalized potentials
and harmonic maps if one chooses a base point and an initial condition $e$ for all relevant extended frames.
\end{enumerate}
\end{remark}}
 %%%%%%%%%%%%%%%%%%%%%%%%%%%%%%%%%
\subsection{Nilpotent normalized potentials of Willmore surfaces of finite uniton type}

We will end this paper with a  {brief view} of a coarse classification and a construction of new Willmore surfaces of finite uniton type, in the spirit of Section 4. To be concrete, we  give a coarse classification of Willmore two-spheres in $S^{n+2}$ by classifying all the possible nilpotent Lie sub-algebras related to the corresponding harmonic conformal Gauss maps, see Theorem 3.1, Theorem 3.3 of \cite{Wang-1}. This classification indicates that Willmore two spheres may inherit more complicated and new geometric structures.  Moreover, by concrete computations of Iwasawa decompositions, we construct new Willmore two-spheres (Section 3.1 and Section 3.2 of \cite{Wang-3}).

To state the coarse classification and constructions, we first recall that a Willmore surface $y:M\rightarrow S^{n+2}$ is globally related to a harmonic conformal Gauss map  $\mathcal{F}:M\rightarrow SO^+(1,n+3)/SO^+(1,3)\times SO(n)$ satisfying some isotropy condition (\cite{Bryant1984}, \cite{Ejiri1988}, \cite{DoWa11}, \cite{Wang-1}). Here
$SO^+(1,n+3)=SO(1,n+3)_0$ is the connected subgroup of
\begin{equation*}SO(1,n+3):=\{A\in Mat(n+4,\mathbb{C})\ |\ A^tI_{1,n+3}A=I_{1,n+3}, \det A=1,\ A=\bar{A}
\},
\end{equation*}
and the subgroup $K=SO^+(1,3)\times SO(n)\subset SO^+(1,n+3)$ is defined by the involution
 \begin{equation}\begin{array}{ll}
\sigma:    SO^+(1,n+3)&\rightarrow SO^+(1,n+3)\\
 \ \ \ \ \ \ \ A &\mapsto DAD^{-1}.
\end{array}\end{equation}
with $D=diag\{-I_4,I_n\}.$ Moreover
\begin{equation*} \mathfrak{so}(1,n+3):=\{A\in Mat(n+4,\mathbb{R})\ |\ A^tI_{1,n+3}+I_{1,n+3}A\}.
\end{equation*}
Let $\mathfrak{k}$ be the Lie algebra of $SO^+(1,3)\times SO(n)\subset SO^+(1,n+3)$ and $\mathfrak{so}(1,n+3)=\mathfrak{k}\oplus\mathfrak{p}$. Then
\[\mathfrak{k}=\left\{\left(
                               \begin{array}{cc}
                                 A_1 & 0 \\
                                 0 & A_2 \\
                               \end{array}
                             \right)| A_1\in Mat(4,\mathbb{R}),\ A_2\in Mat(n,\mathbb{R}),\ A_1^tI_{1,3}+I_{1,3}A_1=0,\ A_2^t+A_2=0
\right\}\]
and
\[\mathfrak{p}=\left\{\left(
                               \begin{array}{cc}
                                0 & B_1 \\
                                 -B_1^tI_{1,3} &0 \\
                               \end{array}
                             \right)| B_1\in Mat(4\times n,\mathbb{R})
\right\}.\]
 {It is straightforward to see that the compact dual of $SO^+(1,n+3)$ and $SO^+(1,3)\times SO(n)$ are $SO(n+4)$ and $SO(4)\times SO(n)$ respectively.}

 Following \cite{DoWa11} that we call a conformally harmonic map  $\mathcal{F}:M\rightarrow SO^+(1,n+3)/SO^+(1,3)\times SO(n)$ a strongly conformally \footnote{ {Note that in \cite{BW} the notion of ``strongly conformal" has been used in another sense.}} harmonic map if the $\mathfrak{p}^{\C}-$part of its Maurer-Cartan form,
\[\left(
                     \begin{array}{cc}
                       0 & {B}_1 \\
                       -{B}^{t}_1I_{1,3} & 0 \\
                     \end{array}
                   \right)\mathrm{d}z,\]
                   satisfies the isotropy condition
                   \begin{equation}\label{eq-isotropic}
                    B_1^tI_{1,3}B_1=0.
                   \end{equation}

                   Now assume $n+4=2m$ so that $SO^+(2m)/SO^+(1,3)\times SO(2m)$ is an inner symmetric space.
\begin{theorem}(Theorem 3.1 of \cite{Wang-1})
Let $\mathcal{F}:M\rightarrow SO^+(1,n+3)/SO^+(1,3)\times SO(n)$ be a finite-uniton type strongly conformally harmonic map, with $n+4=2m$. Let
$$\eta=\lambda^{-1}\left(
                     \begin{array}{cc}
                       0 & \hat{B}_1 \\
                       -\hat{B}^{t}_1I_{1,3} & 0 \\
                     \end{array}
                   \right)\mathrm{d}z $$
  be the normalized potential of $\mathcal{F}$, with $$\hat{B}_1=(\hat{B}_{11},\cdots,\hat{B}_{1,m-2}),\ \hat{B}_{1j}=(\mathrm{v}_{j },\hat{ \mathrm{v}}_{j })\in Mat(4\times 2,\mathbb{C}).$$
 Then up to some conjugation by a constant matrix, every $\hat{B}_{j1}$ of $\hat{B}_1$ has one of the two forms:
\begin{equation}(i) \ \mathrm{v}_{j}= \left(
                                          \begin{array}{ccccccc}
                                            h_{1j}    \\
                                            h_{1j}  \\
                                            h_{3j} \\
                                            ih_{3j}  \\
                                          \end{array}
                                        \right),\ \hat{ \mathrm{v}}_{j } =
                                        \left(
                                          \begin{array}{ccccccc}
                                            \hat{h}_{1j}    \\
                                            \hat{h}_{1j}   \\
                                            \hat{h}_{3j} \\
                                            i\hat{h}_{3j}   \\
                                          \end{array}
                                        \right);\
 (ii) \ \mathrm{v}_{j}= \left(
                                          \begin{array}{ccccccc}
                                            h_{1j}  \\
                                            h_{2j}    \\
                                            h_{3j}  \\
                                            h_{4j}  \\
                                          \end{array}
                                        \right),\ \ \hat{ \mathrm{v}}_{j } = i \mathrm{v}_{j}= \left(
                                          \begin{array}{ccccccc}
                                            ih_{1j}  \\
                                            ih_{2j}    \\
                                            ih_{3j}  \\
                                            ih_{4j}  \\
                                          \end{array}
                                        \right).
\end{equation}
And all of $\{\mathrm{v}_j,\  \hat{ \mathrm{v}}_{j }\}$ satisfy the equations
$$\mathrm{v}_j^tI_{1,3}\mathrm{v}_l=\mathrm{v}_j^tI_{1,3}\hat{\mathrm{v}}_l=\hat{\mathrm{v}}_j^tI_{1,3}\hat{\mathrm{v}}_l=0, \ j,l=1,\cdots,m-2.$$
In other words, there are $m-1$ types of normalized potentials with $\hat{B}_{1}$ satisfying $\hat{B}_{1 }^tI_{1,3}\hat{B}_{1 }=0$, namely those being of one of the following $m-1$ forms (up to some conjugation):

$(1)$ (all pairs are of type (i))

\begin{equation}
                         \hat{B}_{1}= \left(
                                          \begin{array}{ccccccc}
                                            h_{11} & \hat{h}_{11} &  h_{12} & \hat{h}_{12} &\cdots &  h_{1,m-2}& \hat{h}_{1,m-2} \\
                                             h_{11} & \hat{h}_{11} &  h_{12}& \hat{h}_{12}&\cdots &  h_{1,m-2} & \hat{h}_{1,m-2}\\
                                            h_{31}& \hat{h}_{31} &  h_{32}& \hat{h}_{32} &\cdots &  h_{3,m-2}& \hat{h}_{3,m-2} \\
                                            ih_{31}& i\hat{h}_{31} &  ih_{32}& i\hat{h}_{32}&\cdots &  ih_{3,m-2}& i\hat{h}_{3,m-2} \\
                                          \end{array}
                                        \right);\end{equation}

$(2)$ (the first pair is of type (ii), all others are of type (i))
\begin{equation}
                           \hat{B}_{1}= \left(
                                          \begin{array}{ccccccc}
                                            h_{11} & i {h}_{11} &  h_{12} & \hat{h}_{12} &\cdots &  h_{1,m-2}& \hat{h}_{1,m-2} \\
                                             h_{21} & i {h}_{21} &  h_{12}& \hat{h}_{12}&\cdots &  h_{1,m-2} & \hat{h}_{1,m-2}\\
                                            h_{31}& i {h}_{31} &  h_{32}& \hat{h}_{32} &\cdots &  h_{3,m-2}& \hat{h}_{3,m-2} \\
                                            h_{41}& i {h}_{41} &  ih_{32}& i\hat{h}_{32}&\cdots &  ih_{3,m-2}& i\hat{h}_{3,m-2} \\
                                          \end{array}
                                        \right);\end{equation}
Introducing consecutively  more pairs of type (ii), one finally arrives at

       $(m-1)$ (all pairs are of type (ii))
\begin{equation}
                           \hat{B}_{1}= \left(
                                          \begin{array}{ccccccc}
                                            h_{11} & i {h}_{11} &  h_{12} & i {h}_{12} &\cdots &  h_{1,m-2}& i{h}_{1,m-2} \\
                                             h_{21} & i {h}_{21} &  h_{22}& i {h}_{22}&\cdots &  h_{2,m-2} & i{h}_{2,m-2}\\
                                            h_{31}& i {h}_{31} &  h_{32} & i {h}_{32} &\cdots &  h_{3,m-2}& i{h}_{3,m-2} \\
                                            h_{41}& i {h}_{41} &  h_{42} & i {h}_{42}&\cdots &   h_{4,m-2}& ih_{4,m-2} \\
                                          \end{array}
                                        \right).\end{equation}
                            {Here $r(f)\leq 2$ for Case $(1)$ and Case $(m-1)$. For other cases, $r(f)\leq 3,4,5,6$ or $8$, depending on the structure of the potential.}

\end{theorem}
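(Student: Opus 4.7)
The plan is to apply Theorem \ref{thm-finite-uniton-n-com} to reduce the classification of strongly conformally harmonic maps of finite uniton type into the non-compact symmetric space $SO^+(1,n+3)/SO^+(1,3)\times SO(n)$ to the classification of their normalized potentials, which coincide with those of the compact dual $SO(2m)/SO(4)\times SO(2m-4)$ (using $n+4=2m$). By Theorem \ref{thm-finite-uniton-in}, each such normalized potential has the form $\eta = \lambda^{-1}A_0'\,\dd z$ with $A_0'$ meromorphic and taking values in $\mathfrak{g}^{\xi_{can}}_{1}\cap\mathfrak{p}^{\C}$ for some canonical element $\xi_{can}\in\mathcal{I}'$ satisfying $\exp(\pi\xi_{can}) = h$, where $h=\mathrm{diag}\{-I_4,I_{2m-4}\}$ realizes the inner involution $\sigma$. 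The strong conformality condition \eqref{eq-isotropic} then appears as an extra algebraic constraint on $A_0'$.

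The first main step is to enumerate, up to $K$-conjugation, the canonical elements $\xi_{can}\in\mathcal{I}'$ for the root system $D_m$ of $\mathfrak{so}(2m)$ whose exponential at $\pi$ gives an inner involution with $(-1)$-eigenspace of dimension four. Writing $\xi_{can}=\sum_{i\in S}\xi_i$ for $S\subset\{1,\dots,m\}$, the parity condition $\exp(\pi\xi_{can})=h$ selects exactly $m-1$ admissible choices of $S$, indexed by how the four ``negative'' coordinates distribute among the $\mathfrak{so}(2)$-blocks of a fixed Cartan subalgebra of $\mathfrak{so}(2m)$.

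The second step is to compute, for each such $\xi_{can}$, the subspace $\mathfrak{g}^{\xi_{can}}_1\cap\mathfrak{p}^{\C}$ explicitly in the matrix realization. It consists of matrices whose $\mathfrak{p}^{\C}$-block $\hat{B}_1$ decomposes into $m-2$ column pairs $\hat{B}_{1j}=(\mathrm{v}_j,\hat{\mathrm{v}}_j)$, one per $\mathfrak{so}(2)$-block of $\mathfrak{so}(2m-4)$. The eigenvalue equation $\mathrm{ad}\,\xi_{can}\cdot X=\sqrt{-1}\,X$ forces each pair into one of two normal forms. Either $\xi_{can}$ acts on the given block by $+\sqrt{-1}$, in which case identifying the block with $\C$ projects the pair onto a complex line and, combined with the analogous eigenvalue condition on the $\mathfrak{so}(4)$-block, yields the ``type (i)'' columns with the repeated first two entries and the $(h_{3j},ih_{3j})$ shape on the last two; or $\xi_{can}$ mixes the block differently, producing the ``type (ii)'' $i$-proportional pair $\hat{\mathrm{v}}_j=i\mathrm{v}_j$.

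Finally, imposing $\hat{B}_1^tI_{1,3}\hat{B}_1=0$ translates directly into the pairwise relations $\mathrm{v}_j^tI_{1,3}\mathrm{v}_l=\mathrm{v}_j^tI_{1,3}\hat{\mathrm{v}}_l=\hat{\mathrm{v}}_j^tI_{1,3}\hat{\mathrm{v}}_l=0$. Running through the $m-1$ canonical elements from the first step and collecting the resulting column shapes yields the explicit cases $(1)$ through $(m-1)$. The main obstacle will be the first two steps: explicitly matching the $m-1$ canonical elements of $D_m$ to the four-dimensional $(-1)$-eigenspace of $h$, and then tracking the two inequivalent embeddings of the block $\mathfrak{so}(2)$-eigenvectors into $\mathfrak{p}^{\C}$ through the indefinite form $I_{1,3}$; the bounds on $r(f)$ quoted at the end then follow from computing the nilpotency length of the filtration generated by $A_0'$ in $\mathfrak{g}^{\xi_{can}}$ case by case.
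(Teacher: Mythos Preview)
Your outline is essentially correct and matches the approach the paper itself gives: the paper does not prove this theorem in detail but cites \cite{Wang-1}, summarizing the argument in one sentence as ``the classification of nilpotent Lie sub-algebras related to the symmetric space $SO^+(1,n+3)/SO^+(1,3)\times SO(n)$, together with a restriction of the isotropy condition \eqref{eq-isotropic} on potentials.'' Your three-step plan---enumerate the canonical elements $\xi_{can}$ of $D_m$ compatible with $h$, compute the corresponding nilpotent piece of $\mathfrak{p}^{\C}$ for each, then impose strong conformality---is exactly this classification made explicit.

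One small imprecision: by Theorem \ref{thm-finite-uniton-in} the normalized potential $\eta_{-1}$ takes values in the full sum $\sum_{0\le 2j\le r(\xi_{can})-1}\mathfrak{g}^{\xi_{can}}_{2j+1}$, not only in $\mathfrak{g}^{\xi_{can}}_{1}$. By Lemma \ref{lemma-} each odd eigenspace already lies in $\mathfrak{p}^{\C}$, so your ``$\cap\,\mathfrak{p}^{\C}$'' is redundant, but you should not drop the higher odd eigenspaces $\mathfrak{g}^{\xi_{can}}_{3},\mathfrak{g}^{\xi_{can}}_{5},\dots$ when describing the nilpotent subalgebra; in particular they are what produce the stated bounds $r(f)\le 3,4,5,6,8$ in the mixed cases, which you correctly identify as coming from the nilpotency length of the graded algebra generated by $A_0'$.
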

 {This theorem follows from the classification of nilpotent Lie sub-algebras related to the symmetric space  $SO^+(1,n+3)/SO^+(1,3)\times SO(n)$, together with a restriction  of the isotropy condition \eqref{eq-isotropic} on potentials (to derive Willmore surfaces). See \cite{Wang-1} for more details.}
\begin{example}(\cite{DoWa12}, \cite{Wang-3})\label{example}
 Let \[\eta=\lambda^{-1}\left(
                      \begin{array}{cc}
                        0 & \hat{B}_1 \\
                        -\hat{B}_1^tI_{1,3} & 0 \\
                      \end{array}
                    \right)\mathrm{d}z,\ ~ \hbox{ with } ~\ \hat{B}_1=\frac{1}{2}\left(
                     \begin{array}{cccc}
                       2iz&  -2z & -i & 1 \\
                       -2iz&  2z & -i & 1 \\
                       -2 & -2i & -z & -iz  \\
                       2i & -2 & -iz & z  \\
                     \end{array}
                   \right).\]
 {Each extended frame $F(z,\bar z,\lambda)$ derived  from this potential has singularities, while the corresponding harmonic maps and  the corresponding  Willmore two-spheres are globally well-defined.}
The associated family of Willmore two-spheres $x_{\lambda}$, $\lambda\in S^1$, corresponding to $\eta$, is
\begin{equation}\label{example1}
\begin{split}x_{\lambda}&=\frac{1}{ \left(1+r^2+\frac{5r^4}{4}+\frac{4r^6}{9}+\frac{r^8}{36}\right)}
\left(
                          \begin{array}{c}
                            \left(1-r^2-\frac{3r^4}{4}+\frac{4r^6}{9}-\frac{r^8}{36}\right) \\
                            -i\left(z- \bar{z})(1+\frac{r^6}{9})\right) \\
                            \left(z+\bar{z})(1+\frac{r^6}{9})\right) \\
                            -i\left((\lambda^{-1}z^2-\lambda \bar{z}^2)(1-\frac{r^4}{12})\right) \\
                            \left((\lambda^{-1}z^2+\lambda \bar{z}^2)(1-\frac{r^4}{12})\right) \\
                            -i\frac{r^2}{2}(\lambda^{-1}z-\lambda \bar{z})(1+\frac{4r^2}{3}) \\
                            \frac{r^2}{2} (\lambda^{-1}z+\lambda \bar{z})(1+\frac{4r^2}{3})  \\
                          \end{array}
                        \right)\\
  \end{split}
\end{equation}
 $x_{\lambda}:S^2\rightarrow S^6$ is a Willmore immersion in $S^6$, which is non S-Willmore, full, and totally isotropic (See \cite{Ejiri1988, BR} for a twistor method for totally isotropic surfaces). In particular, $x_\lambda$ does not have any branch points. The uniton number of $x_{\lambda}$ is $2$ and therefore its conformal Gauss map is $S^1-$invariant by Corollary 5.6 of \cite{BuGu}.
\end{example}

%%%%%%%%%%%%%%%%%%%%%%%%%%

\section{Appendix I: The loop group method for harmonic maps}
In the section we will outline the basic loop group method for harmonic maps in an inner symmetric space $G/K$ for readers' convenience. For full details we refer to \cite{DoWa-AT}.

  \subsection{Harmonic Maps into Symmetric Spaces}

\begin{itemize}
\item G connected semisimple real Lie group with trivial center.
\item {$\sigma(g) = h g h^{-1}$ inner automorphism of G of order 2}, with $h\in G$.

\item {$\hat{K} = Fix^\sigma (G)$ and $K$ is the connected component of $\hat{K}$ containing the identity $e$ of $G$}.
\item $M$ is a Riemann surface with $\pi: \tilde{M} \rightarrow M$ universal cover of $M$.
\item
We have the following commutative diagram
\begin{displaymath}
    \xymatrix{\tilde{M} \ar[r]^{F}  \ar[d]_{\pi_M}   \ar[rd]^{\tilde{\mathcal{F}}}   &  G \ar[d]^{\pi_S}\\
             M  \ar[r]_{\mathcal{F}}  & {G/{\hat{K}}} }
\end{displaymath}

\item $\pi_M$ and $\pi_S$  are natural projections
\item $\mathcal{F}$ is a smooth map (will always denote a harmonic map below)
\item $\tilde{\mathcal{F}}$ is the natural lift of $\mathcal{F}$ to $\tilde{M}$.
\item F is simultaneously the moving frame of $\mathcal{F}$ and $\tilde{\mathcal{F}}$, which may have two singular points when $M = S^2$ \cite{DoWa11}.
\item  We put: $\mathfrak{g} = Lie (G)$ and decompose it relative to $\sigma$:
$\mathfrak{g}  = \mathfrak{k}  + \mathfrak{p}$.
\item   Let $\mathcal{F}: {\tilde{M}} \rightarrow G/{\hat{K}}$ be a smooth map and
    $F:\tilde{M} \rightarrow G$  a {frame for $\mathcal{F}.$}  Decompose the {Maurer-Cartan} form $\alpha = F^{-1} \dd F$ of $F$ in the form
  $$  \alpha = \alpha^{\prime}_\mathfrak{p} \dd z+ \alpha_\mathfrak{k} +
  \alpha^{\prime\prime}_\mathfrak{p} \dd\bar{z}$$
  with $\alpha^{\prime}_\mathfrak{p}$ a $(1,0)-$form and  $\alpha^{\prime\prime}_\mathfrak{p}$ a $(0,1)-$form
 and $\alpha_\mathfrak{k} $ a $\mathfrak{k}-$valued real $1-$form.
\item  {\bf Pohlmeyer's Theorem} \cite{DPW}:
\begin{theorem}  The map $\mathcal{F}$ is  {harmonic}  if and only if
 $\alpha_\lambda = \lambda^{-1} \alpha^{\prime}_\mathfrak{p} dz + \alpha_\mathfrak{k} +
 \lambda \alpha^{\prime\prime}_\mathfrak{p} d\bar{z}$  is integrable for all
 $\lambda \in S^1.$

 In this case:  $F\zl^{-1} \dd F\zl = \lambda^{-1} \alpha^\prime_\mathfrak{p} \dd z + \alpha_\mathfrak{k} + \lambda \alpha^{\prime\prime}_\mathfrak{p} \dd\bar{z}$  \hspace{2mm}  is integrable! We call $F\zl$ an extended frame based at $z_0\in M$ if $F\zl|_{z=z_0}=e$.
 \end{theorem}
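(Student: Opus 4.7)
The plan is to prove Pohlmeyer's Theorem by expanding the Maurer-Cartan equation for $\alpha_\lambda$ as a Laurent polynomial in $\lambda$ and matching coefficients, then identifying the resulting equations with the harmonic map equation for $\mathcal{F}$. Integrability of $\alpha_\lambda$ means that $d\alpha_\lambda + \tfrac{1}{2}[\alpha_\lambda,\alpha_\lambda] = 0$, which is equivalent to the local existence of $F(z,\bar z,\lambda)$ with $F^{-1} dF = \alpha_\lambda$.

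First I would substitute $\alpha_\lambda = \lambda^{-1}\alpha'_{\mathfrak{p}}\,\dd z + \alpha_{\mathfrak{k}} + \lambda\,\alpha''_{\mathfrak{p}}\,\dd\bar z$ into the Maurer-Cartan equation and collect terms by powers of $\lambda$. Because $\dd z\wedge \dd z = 0$ and $\dd\bar z\wedge \dd\bar z = 0$, the $\lambda^{\pm 2}$ terms from the bracket vanish automatically. The bracket relations $[\mathfrak{k},\mathfrak{k}]\subset\mathfrak{k}$, $[\mathfrak{k},\mathfrak{p}]\subset\mathfrak{p}$, $[\mathfrak{p},\mathfrak{p}]\subset\mathfrak{k}$ (coming from $\sigma$ being an involution) keep the components in the right subspaces. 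Requiring that the coefficients of $\lambda^{-1}$, $\lambda^{0}$, $\lambda^{1}$ each vanish yields three equations:
\begin{align*}
(\mathrm{I}_-)\quad & \dd(\alpha'_{\mathfrak{p}}\,\dd z) + [\alpha_{\mathfrak{k}},\alpha'_{\mathfrak{p}}\,\dd z] = 0,\\
(\mathrm{I}_0)\quad & \dd\alpha_{\mathfrak{k}} + \tfrac{1}{2}[\alpha_{\mathfrak{k}},\alpha_{\mathfrak{k}}] + [\alpha'_{\mathfrak{p}}\,\dd z,\,\alpha''_{\mathfrak{p}}\,\dd\bar z] = 0,\\
(\mathrm{I}_+)\quad & \dd(\alpha''_{\mathfrak{p}}\,\dd\bar z) + [\alpha_{\mathfrak{k}},\alpha''_{\mathfrak{p}}\,\dd\bar z] = 0.
\end{align*}

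Next I would compare these with the identities forced by $\alpha = F^{-1}\dd F$ already satisfying $\dd\alpha + \tfrac{1}{2}[\alpha,\alpha] = 0$. Splitting this unconditional Maurer-Cartan equation into its $\mathfrak{k}$- and $\mathfrak{p}$-components gives $(\mathrm{I}_0)$ for free, and gives the single $\mathfrak{p}$-equation $\dd(\alpha'_{\mathfrak{p}}\,\dd z + \alpha''_{\mathfrak{p}}\,\dd\bar z) + [\alpha_{\mathfrak{k}},\,\alpha'_{\mathfrak{p}}\,\dd z + \alpha''_{\mathfrak{p}}\,\dd\bar z] = 0$. So $(\mathrm{I}_\pm)$ are equivalent (modulo what is automatic) to the stronger splitting that each of the $(1,0)$- and $(0,1)$-pieces is closed with respect to the $\alpha_{\mathfrak{k}}$-connection separately. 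Writing $\alpha_{\mathfrak{p}} = \alpha'_{\mathfrak{p}}\,\dd z + \alpha''_{\mathfrak{p}}\,\dd\bar z$ and using the Hodge star on the Riemann surface (so $\ast\,\dd z = -i\,\dd z$, $\ast\,\dd\bar z = i\,\dd\bar z$), this separate-splitting is precisely equivalent to the harmonic map equation $\dd\ast\alpha_{\mathfrak{p}} + [\alpha_{\mathfrak{k}}, \ast\alpha_{\mathfrak{p}}] = 0$ for $\mathcal{F}:\tilde M\to G/\hat K$ in the moving-frame formulation.

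The main technical step is clean bookkeeping in the third paragraph: showing that the conjunction of the $\mathfrak{p}$-part of the ordinary Maurer-Cartan equation (a single equation) and the harmonic map equation (also a single equation, involving the Hodge star) is exactly the same as the pair $(\mathrm{I}_-)$ and $(\mathrm{I}_+)$. Once this is done, the equivalence is immediate: if $\mathcal{F}$ is harmonic then $(\mathrm{I}_-)$, $(\mathrm{I}_0)$, $(\mathrm{I}_+)$ all hold, so $\alpha_\lambda$ is integrable for every $\lambda\in S^1$; conversely, integrability for all $\lambda$ forces $(\mathrm{I}_\pm)$, which together recover the harmonic map equation. Finally, given integrability, standard ODE theory on the simply connected $\tilde M$ produces $F(z,\bar z,\lambda)$ with $F(z_0,\bar z_0,\lambda) = e$, giving the extended frame based at $z_0$.
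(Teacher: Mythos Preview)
Your argument is correct and is the standard proof of Pohlmeyer's Theorem: expand the Maurer--Cartan equation for $\alpha_\lambda$ in powers of $\lambda$, observe that the $\lambda^0$ coefficient is automatic from the flatness of $\alpha$ itself, and identify the pair $(\mathrm{I}_-),(\mathrm{I}_+)$ with the conjunction of the $\mathfrak{p}$-part of the ordinary Maurer--Cartan equation and the harmonic map equation $d\!\ast\!\alpha_{\mathfrak{p}} + [\alpha_{\mathfrak{k}},\ast\alpha_{\mathfrak{p}}] = 0$.

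There is nothing to compare against here: the paper states this theorem in Appendix~I as background material and simply attributes it to \cite{DPW} without supplying a proof. Your write-up is exactly the argument one finds in the standard references (e.g.\ \cite{DPW}, or the original zero-curvature formulation going back to Pohlmeyer and Zakharov--Shabat), so it is entirely appropriate.
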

\item The monodromy matrix $\chi(\gamma,\lambda)$:
\begin{theorem}
{Let $\mathcal{F}: M \rightarrow G/{\hat{K}}$ be a harmonic map and $z_0$  a fixed basepoint.} {Let,  as above, $\tilde{\mathcal{F}}: \tilde{M} \rightarrow G/{\hat{K}}$ denote the natural lift of $\mathcal{F}$.} Then for  $\gamma \in \pi_1(M)$ we obtain: $ { \gamma^*\tilde{\mathcal{F}} = \tilde{\mathcal{F}}(\gamma.z, \overline{\gamma.z}, \lambda) = \chi(\gamma,\lambda)  \tilde{\mathcal{F}}(z, \overline{z}, \lambda)} \hspace{2mm} \mbox{with} \hspace{2mm} {\chi(\gamma,\lambda)  \in G}.$

An analogous formula also holds for the extended frame $F\zl$.
\end{theorem}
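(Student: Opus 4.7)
The plan is to deduce the monodromy formula from the uniqueness of the solution of the Maurer--Cartan equation with a prescribed initial condition, once we observe that the deck transformation group $\pi_1(M)$ acts on the $\lambda$-dependent data $\alpha_\lambda$ only by a gauge transformation with values in $\hat K$, which is in particular independent of $\lambda$ and preserves the decomposition $\mathfrak{g}=\mathfrak{k}\oplus\mathfrak{p}$.

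Concretely, I would first choose a smooth frame $F_0:\tilde M\to G$ for $\tilde{\mathcal{F}}$, which exists because $\tilde M$ is simply connected. Since $\tilde{\mathcal{F}}$ is the lift of a map defined on $M$, we have $\tilde{\mathcal{F}}(\gamma.z)=\tilde{\mathcal{F}}(z)$, hence $F_0(\gamma.z)=F_0(z)k_0(\gamma,z)$ for some smooth $k_0(\gamma,\cdot):\tilde M\to \hat K$. A direct computation then yields
\[
\gamma^*\alpha_0 = \mathrm{Ad}(k_0^{-1})\alpha_0 + k_0^{-1}\,dk_0,
\]
and because $\mathrm{Ad}(\hat K)$ preserves the $\mathfrak{k}\oplus\mathfrak{p}$ decomposition while $k_0^{-1}dk_0\in\mathfrak{k}$ is $\lambda$-independent, the identity $\gamma^*\alpha_\lambda=\mathrm{Ad}(k_0^{-1})\alpha_\lambda+k_0^{-1}dk_0$ persists for every $\lambda\in S^1$. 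I would then observe that both $z\mapsto F(\gamma.z,\overline{\gamma.z},\lambda)$ and $z\mapsto F(z,\bar z,\lambda)\,k_0(\gamma,z)$ solve the Maurer--Cartan equation with right-hand side $\gamma^*\alpha_\lambda$, so uniqueness up to left multiplication by a constant gives
\[
F(\gamma.z,\overline{\gamma.z},\lambda)=\chi(\gamma,\lambda)\,F(z,\bar z,\lambda)\,k_0(\gamma,z).
\]
This is the analogous formula for the extended frame; projecting modulo $\hat K$ absorbs the right $\hat K$-factor and produces the identity for $\tilde{\mathcal{F}}$.

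The main point that requires care is showing $\chi(\gamma,\lambda)\in G$, not merely in $G^{\mathbb{C}}$. This follows from the fact that, by the twisting condition, $\alpha_\lambda$ takes values in the real form for $\lambda\in S^1$, so the initial value problem for the extended frame stays in $\Lambda G_\sigma$; its evaluation at any fixed $\lambda\in S^1$ is a differential equation over $G$, forcing the comparison constant $\chi(\gamma,\lambda)$ into $G$ as well. Finally, iterating the displayed equation with two consecutive deck transformations delivers the cocycle property $\chi(\gamma_1\gamma_2,\lambda)=\chi(\gamma_1,\lambda)\chi(\gamma_2,\lambda)$, so $\chi(\cdot,\lambda):\pi_1(M)\to G$ is a genuine representation for each $\lambda$.
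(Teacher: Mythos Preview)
The paper does not supply a proof of this statement; it is listed in Appendix~I as background material, with the reader referred to \cite{DoWa-AT} for full details. Your argument is the standard one and is correct: pull back a frame of $\tilde{\mathcal{F}}$ by a deck transformation, observe that the induced gauge transformation is $\hat K$-valued and hence $\lambda$-independent, deduce that $\gamma^*\alpha_\lambda$ and $\alpha_\lambda$ differ by exactly that gauge, and invoke uniqueness of solutions of the frame ODE up to a left-multiplicative constant.

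Two small points deserve to be made explicit. First, you use implicitly that deck transformations act holomorphically on $\tilde M$, so $\gamma^*$ preserves the $(1,0)/(0,1)$ decomposition; this is what lets you attach the factors $\lambda^{-1}$ and $\lambda$ separately to the $\mathfrak{p}'$- and $\mathfrak{p}''$-parts after pulling back and thus pass from the identity for $\alpha_0$ to the identity for $\alpha_\lambda$. Second, note that in the theorem's displayed formula the symbol $\tilde{\mathcal{F}}(z,\bar z,\lambda)$ stands for the associated family $F(z,\bar z,\lambda)\bmod \hat K$, not the original $\lambda$-independent lift; your derivation handles this correctly by first using the $\lambda=1$ invariance $\tilde{\mathcal{F}}(\gamma.z)=\tilde{\mathcal{F}}(z)$ to produce $k_0$, then upgrading to general $\lambda$ at the level of frames, and only then projecting to $G/\hat K$.
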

\item  We would also like to point out that we will assume throughout this paper that all harmonic maps are ``full". A harmonic map $\mathcal{F}:M\rightarrow G/K$ is called ``full"  if only $g = e$ fixes every element
of $\mathcal{F}(M)$.
That is,  if there exists $g\in G$ such that $g \mathcal{F}(p)=\mathcal{F}(p)$ for all
$p\in M$, then $g=e$.
\end{itemize}

  %%%%%%%%%%%%%%%%%%%%%%%%%%%%%%%%%%%%%%

\subsection{The loop group method: Harmonic maps $\Longleftrightarrow$ Normalized Potentials}
\begin{itemize}
    \item {Loop group} with {weighted Wiener topology}:
\[\Lambda G^{\C} = \{ g: S^1 \rightarrow G^{\C}; ||g(\lambda)|| = || \sum_{k \in \mathbb{Z}} g_k \lambda^k||
=  \sum_{k \in \mathbb{Z}} w(k) |g_k| < \infty \}.\]
\item
Twisted Loop group with weighted Wiener topology:
$\Lambda G^{\C}_\sigma = \{ g \in \Lambda G^{\C} ; \sigma ( g(-\lambda)) = g(\lambda) \}$.
\item The real, $+$ and $-$ loop groups: $\Lambda G_\sigma = \{ g \in \Lambda G^{\C}_\sigma ; g \in G \}$.
$\Lambda^+ G^{\C}_{\sigma}  = \{ g \in \Lambda G^{\C}_{\sigma} ;  g_k = 0 \hspace{2mm} \mbox{for} \hspace{2mm} k < 0 \}$.
$\Lambda^- G^{\C}_\sigma  = \{ g \in \Lambda G^{\C}_\sigma ;  g_k = 0 \hspace{2mm} \mbox{for}
\hspace{2mm} k > 0 \}$.
\item

Loops which have a finite Fourier expansion are called {\it algebraic loops} and
 denoted by the subscript $``alg"$, like
$\Lambda_{alg} G_{\sigma},\ \Lambda_{alg} G^{\mathbb{C}}_{\sigma},\
\Omega_{alg} G_{\sigma} $ as in  \cite{BuGu}, \cite{Gu2002}. And we define
  \begin{equation}\label{eq-alg-loop}\Omega^k_{alg} G_{\sigma}:
  =\left\{\gamma\in
\Omega_{alg} G_{\sigma}|
Ad(\gamma)=\sum_{|j|\leq k}\lambda^jT_j \right\}\subset \Omega_{alg} G_{\sigma} .\end{equation}
\item We have  $F(z, \bar{z}, \lambda) \in \Lambda G_\sigma $ \hspace{2mm} \mbox{and} \hspace{2mm}
 $\alpha_\lambda \in \Lambda \mathfrak{g}_{\sigma}.$
\item From harmonic map $\mathcal{F}$ to the normalized potential:
\begin{itemize}
    \item  $\mathcal{F}: \tilde{M} \rightarrow G/{\hat{K}}$  { harmonic map} $\Rightarrow$
    \item
The extended frame $F\zl$ of $\mathcal{F}$ $\Rightarrow$ (By Birkhoff decomposition \cite{DPW,PS})

    \item Decompose {$F\zl = F_-(z,\lambda) V_+(z,\bar{z},\lambda)$}, with {$F_-(z,\lambda) \in \Lambda^- G^{\C}_\sigma $} meromorphic in $z \in \tilde{M}$  and  holomorphic in $\lambda \in \C^*$, and  with $V_+$ holomorphic for  $\lambda  \in \C$ and real analytic in $z, \bar{z} \in \tilde{M}$, with singularities. $F_\lambda : \tilde{M} \rightarrow  \Lambda G_\sigma$ is called  { the normalized extended frame for $\mathcal{F}$} $\Rightarrow$
    \item We have \emph{the normalized  potential} $\eta$  for  $\mathcal{F}$ with base point $z_0$, by setting  $\eta:= F_-^{-1} \dd F_- = \lambda^{-1} \eta_{-1} \dd z$, where $\eta_{-1} \in \mathfrak{p}^{\C}.$
\end{itemize}

\item From the normalized  potential $\eta$ to the harmonic maps $\mathcal{F}_{\lambda}$:
\begin{itemize}
    \item
We start from a  normalized potential type meromorphic one-form,
{$\eta= \lambda^{-1} \eta_{-1}\dd z,$ } with  $ \eta_{-1} \in \mathfrak{p}^{\C}$ and meromorphic in $z$.
    $\Rightarrow$
    \item
Let $F_-(z, \lambda)$  solve the ODE
$\dd F_-= F_- \eta$ satisfying  $F_-(z_0, \lambda) = e$.
    $\Rightarrow$
    \item
    Let's decompose $F_- = F\zl \cdot (V_+)^{-1}$. This is an ``Iwasawa splitting''( an
infinite dimensional ''Gram-Schmidt'' decomposition) \cite{DPW,PS}. We obtain
 the extended frame $F\zl: \tilde{M} \rightarrow  \Lambda G_\sigma$ of a harmonic map from $\tilde{M}$ into $G/{\hat{K}}$.
    $\Rightarrow$
    \item
$ \mathcal{F} \equiv F_\lambda \mod \hat{K} : \ \tilde{M} \rightarrow G/{\hat{K}}$
Defines the associated family of  {harmonic maps with extended frames $F_\lambda$}.
\end{itemize}

\end{itemize}

%%%%%%%%%%%%%%%%%%%%%%%%

\section{{Appendix II: Algebraic and totally symmetric harmonic maps}}
In this section we collect some basic definitions and results concerning Algebraic and totally symmetric harmonic maps. For more details we refer to \cite{DoWa-AT}.
\subsection{{Symmetries of harmonic maps}}
We start by recalling the definition of a symmetry for a harmonic map.
For more details we refer to \cite{DoWa-AT}.
\begin{itemize}
     \item
 {Let  $\mathcal{F}: M \rightarrow G/K$ be a harmonic map. Then a pair, $(\gamma,R)$, is called a symmetry of $\mathcal{F}$, if
$\gamma \in \pi_1(M)$ and $R \in G$  is an automorphism of $G/K$ satisfying
$\mathcal{F}(\gamma.p) = R.\mathcal{F}(p)$
  for all $p \in M$.}
  \item
  Let  $\mathcal{F}: M \rightarrow G/K$ be a harmonic map {and}  $(\gamma,R)$ a symmetry
  of $\mathcal{F}.$
Let $\tilde{M}$ denote the universal cover of $M$
and $\tilde{\mathcal{F}}: \tilde{M} \rightarrow G/K, \tilde{\mathcal{F}} = \mathcal{F} \circ \pi$, its natural lift.
\begin{lemma}We have
     \begin{enumerate}
 \item  $\tilde{\mathcal{F}}$ satisfies $\tilde{\mathcal{F}} (\gamma.z) = R.\tilde{\mathcal{F}}(z).$
\item
 For any frame $F : \tilde{M} \rightarrow G$  of $\mathcal{F}$ one  obtains
$\gamma^*F(z,\bar z) = RF(z,\bar z)k(z,\bar z),$
where $k(z,\bar z)$ is a function from $\tilde{M}$ into $K$.
\item  For the extended frame  {$F(z,\bar z,\lambda) : \tilde{M} \rightarrow \Lambda G_\sigma$
of $\mathcal{F}$} there exists some map
 $\rho_\gamma: \C^* \rightarrow \Lambda G_\sigma$ such that
$
\gamma^*F(z,\bar z,\lambda)  = \rho_\gamma (\lambda) F(z,\bar z,\lambda)  {k(z, \bar z)},$
where $k$ is the $\lambda \textendash$independent function from $\tilde{M}$ into $K$ occurring in (2). Moreover, $\rho_{\gamma}(\lambda)|_{ \lambda= 1} = R$ holds.
  \end{enumerate}
\end{lemma}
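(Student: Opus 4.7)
The plan is to prove the three parts of the lemma in order, with most of the real work concentrated in part (3), where the main challenge is to promote the identity (2) holding at a single frame value to one which accommodates the full loop parameter $\lambda$.

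For (1), I would simply unpack definitions. Because $\gamma\in\pi_1(M)$ acts on $\tilde M$ by deck transformations covering its action on $M$, and because $\tilde{\mathcal{F}}=\mathcal{F}\circ\pi$ by construction, the symmetry relation $\mathcal{F}(\gamma.p)=R.\mathcal{F}(p)$ at $p=\pi(z)$ immediately lifts to $\tilde{\mathcal{F}}(\gamma.z)=R.\tilde{\mathcal{F}}(z)$. For (2), I would observe that both $\gamma^{*}F$ and $R\cdot F$ are smooth $G$-valued lifts, under $\pi_S:G\to G/\hat K$, of the same harmonic map $\gamma^{*}\tilde{\mathcal F}=R.\tilde{\mathcal F}$ given by part (1). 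Any two such lifts differ by right multiplication by a smooth $\hat{K}$-valued function, and by continuity and our standing connectedness assumption on $K$ this function takes values in $K$. Setting $k(z,\bar z):=\bigl(RF(z,\bar z)\bigr)^{-1}\gamma^{*}F(z,\bar z)$ yields the stated formula, with smoothness inherited from $F$.

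The heart of the proof is (3). First, apply (2) to the specific frame $F(z,\bar z,1)$ obtained by evaluating the extended frame at $\lambda=1$; this produces a particular $k(z,\bar z)\in K$ which is, by construction, independent of $\lambda$. I then claim the same $k$ works for every $\lambda$. Writing $\alpha=F(z,\bar z,1)^{-1}\dd F(z,\bar z,1)=\alpha_{\mathfrak k}+\alpha'_{\mathfrak p}\dd z+\alpha''_{\mathfrak p}\dd\bar z$, the identity (2) yields
\[\gamma^{*}\alpha=k^{-1}\alpha k+k^{-1}\dd k.\]
Since $K$ is the identity component of $\mathrm{Fix}^{\sigma}(G)$, $\mathrm{Ad}(k)$ preserves both $\mathfrak k$ and $\mathfrak p^{\C}$ (and separately its $(1,0)$ and $(0,1)$ parts), and $k^{-1}\dd k\in\mathfrak k$. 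Decomposing by type gives
\[\gamma^{*}\alpha_{\mathfrak k}=k^{-1}\alpha_{\mathfrak k}k+k^{-1}\dd k,\quad\gamma^{*}\alpha'_{\mathfrak p}=k^{-1}\alpha'_{\mathfrak p}k,\quad\gamma^{*}\alpha''_{\mathfrak p}=k^{-1}\alpha''_{\mathfrak p}k.\]
Weighting the $\mathfrak p$-pieces by $\lambda^{-1}$ and $\lambda$ respectively and recombining yields
\[\gamma^{*}\alpha_\lambda=k^{-1}\alpha_\lambda k+k^{-1}\dd k=\bigl(F(z,\bar z,\lambda)k(z,\bar z)\bigr)^{-1}\dd\bigl(F(z,\bar z,\lambda)k(z,\bar z)\bigr).\]
Thus $\gamma^{*}F(z,\bar z,\lambda)$ and $F(z,\bar z,\lambda)k(z,\bar z)$ solve the same $\lambda$-parametrized ODE system and therefore differ by a $z$-independent left factor $\rho_\gamma(\lambda)$. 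Evaluating at any basepoint $z_0$ gives a closed-form expression for $\rho_\gamma(\lambda)$; membership in $\Lambda G_\sigma$ then follows because for $\lambda\in S^{1}$ all factors lie in $G$, while the twisting condition $\sigma(\rho_\gamma(-\lambda))=\rho_\gamma(\lambda)$ is inherited from the twisting of $F$ together with $\sigma(k)=k$ (as $K\subset\hat K=\mathrm{Fix}^{\sigma}G$). Finally, setting $\lambda=1$ and comparing with (2) yields $\rho_\gamma(1)=R$.

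The main obstacle is the key claim in (3): that a single, $\lambda$-independent $k(z,\bar z)$ can be chosen uniformly for every $\lambda$. This rests crucially on two structural features that must be used carefully, namely that $K$ and its adjoint action are $\lambda$-independent, and that $\mathrm{Ad}(K)$ preserves the Cartan decomposition $\mathfrak g=\mathfrak k\oplus\mathfrak p$, so that the transformation of the Maurer-Cartan form under $\gamma^{*}$ commutes with the $\lambda$-weighting of the $\mathfrak p$-pieces. Once this is established, the remaining statements — that $\rho_\gamma$ lies in $\Lambda G_\sigma$ and restricts to $R$ at $\lambda=1$ — are immediate consequences of the construction.
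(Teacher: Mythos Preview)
The paper does not prove this lemma; it is stated in Appendix~II (Section~5.1) without proof, with a pointer to \cite{DoWa-AT} for details. Your argument is correct and is the standard one for establishing the monodromy of extended frames. The essential step in part~(3)---that the $K$-valued gauge $k$ obtained from~(2) at $\lambda=1$ works uniformly for all $\lambda$ because $\mathrm{Ad}(K)$ preserves the Cartan decomposition $\mathfrak g=\mathfrak k\oplus\mathfrak p$, so that conjugation by $k$ commutes with the $\lambda$-weighting of the $\mathfrak p$-components---is exactly the right mechanism, and your verification that $\rho_\gamma$ is real on $S^1$, $\sigma$-twisted, and equals $R$ at $\lambda=1$ is complete.

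One small remark on part~(1): since $\gamma\in\pi_1(M)$ acts on $\tilde M$ as a deck transformation, it covers the \emph{identity} on $M$, so $\tilde{\mathcal F}(\gamma.z)=\mathcal F(\pi(\gamma.z))=\mathcal F(\pi(z))=\tilde{\mathcal F}(z)$ directly; your phrase ``covering its action on $M$'' suggests something nontrivial happens on $M$, which it does not. In the paper's setup this forces $R=e$ (by fullness), so the content of the lemma is really the monodromy of the frame rather than a symmetry of the harmonic map itself. This does not affect the correctness of your arguments in (2) and (3).
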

\item
 {Note, since $\mathcal{F}$ is full, for each symmetry $(\gamma,R)$ the automorphism $R$ of $G/K$
is uniquely determined by $\gamma$. We therefore write $\rho_\gamma (\lambda) = \rho(\gamma,\lambda)$ and ignore $R$ in this notation. Also note that $\rho_\gamma$ actually is defined and holomorphic for all $\lambda \in \C^*$.}
\end{itemize}

%%%%%%%%%%%%%%%%%%%%%%

\subsection{{Algebraic harmonic maps}}

We start by giving the definition and will list a few properties. For more details we refer to \cite{DoWa-AT}.
\begin{definition} (Algebraic Harmonic Maps ) Let $M$ be a Riemann surface and $\tilde{M}$ its simply-connected covering Riemann surface. A harmonic map
$\mathcal{F}: M \rightarrow G/K$  {is said} to be \emph{algebraic,} if some  extended frame $F$ of $\mathcal{F},$ i.e. some frame satisfying $F(z_0,\bar z_0, \lambda)=e$ for some (fixed) base point $z_0\in \tilde{M},$ is a Laurent polynomial in $\lambda$.
\end{definition}
 Note that if a harmonic map is algebraic, then any extended frame is a Laurent polynomial in $\lambda$. So the initial condition $F(z_0,\bar z_0, \lambda)=e$ is not a restriction.
\begin{proposition}\label{prop-fut} \cite{DoWa-AT}  Let $\mathcal{F}:\tilde{M} \rightarrow G/K$ be a harmonic map defined on a  {contractible} Riemann surface $\tilde{M}$. Let $z_0\in \tilde {M}$ be a base point. Then the following statements are equivalent:
\begin{enumerate}
\item   $\mathcal{F}$ is  an algebraic harmonic map.
\item There exists a  frame  {$F(z,\bar z,\lambda)$} of $\mathcal{F}$ which is a Laurent polynomial in $\lambda$.
\item  The normalized extended frame  {$\hat{F}_-(z,\lambda)$ of any extended frame
$\hat{F}(z,\bar z,\lambda)$ of $\mathcal{F}$} is a Laurent polynomial in $\lambda$.
\item  Every holomorphic  {extended frame $\hat{C}(z,\lambda)$}  associated to any extended frame
 {$\hat{F}(z,\bar z,\lambda)$} of $\mathcal{F},$ thus satisfying $\hat{C}(z_0, \lambda) = e$ for all $\lambda,$
 only  contains finitely many negative powers of $\lambda$.
\item There exists a holomorphic extended  { frame $C^\sharp(z,\lambda)$ } which  { only contains} finitely many negative powers of $\lambda$.
\end{enumerate}
\end{proposition}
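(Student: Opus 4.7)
The plan is to establish the cycle of implications $(1) \Rightarrow (2) \Rightarrow (3) \Rightarrow (4) \Rightarrow (5) \Rightarrow (1)$. The key technical input is the classical fact from the theory of algebraic loop groups (\cite{PS}) that both the Birkhoff and the Iwasawa factorizations, when restricted to their respective big cells, preserve the class of algebraic (Laurent polynomial) loops.

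The implication $(1) \Rightarrow (2)$ is immediate since an extended frame is in particular a frame. For $(2) \Rightarrow (3)$, given a Laurent polynomial frame $\hat F$, perform its Birkhoff factorization $\hat F = \hat F_- \hat V_+$ pointwise outside a discrete subset of $\tilde M$; the algebraic Birkhoff theorem forces both factors to be Laurent polynomial. To extend this conclusion to any other extended frame of $\mathcal F$, observe that on the contractible $\tilde M$ two extended frames of the same harmonic map differ only by a $\lambda$-independent multiplication by an element of $K$, which preserves the Laurent polynomial property of the normalized factor.

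For $(3) \Rightarrow (4)$, a holomorphic extended frame $\hat C$ associated to $\hat F$ can be written, using the contractibility of $\tilde M$, as $\hat C = \hat F_- \cdot T$ with $T \in \Lambda^+ G^{\C}_\sigma$ chosen to make $\hat C$ holomorphic in $z$ and normalized as $\hat C(z_0,\lambda)=e$. Since $T$ contains only non-negative powers of $\lambda$, the negative powers of $\hat C$ are inherited from the Laurent polynomial $\hat F_-$, and hence are finite in number; the ``every'' quantifier follows from the fact that holomorphic frames associated to the same $\hat F$ differ by right multiplication in $\Lambda^+ G^{\C}_\sigma$. The step $(4) \Rightarrow (5)$ is trivial.

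For $(5) \Rightarrow (1)$, suppose $C^\sharp$ has lowest Fourier coefficient at $\lambda^{-N}$, and Birkhoff-factorize $C^\sharp = \tilde F_- \tilde V_+$ on the big cell. A direct coefficient comparison using $\tilde F_- = C^\sharp \tilde V_+^{-1}$ gives $(\tilde F_-)_{-r} = \sum_{m \geq 0} C^\sharp_{-r-m} (\tilde V_+^{-1})_m = 0$ whenever $r > N$, so $\tilde F_-$ is a Laurent polynomial of degree at most $N$ in $\lambda^{-1}$. Invoking the stability of the Iwasawa factorization under algebraic loops then produces an Iwasawa splitting $\tilde F_- = F \cdot V_+^{-1}$ with $F \in \Lambda_{alg} G_\sigma$ a Laurent polynomial, which serves as an extended frame of $\mathcal F$, establishing (1). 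The main obstacle is this last invocation---that the Iwasawa factorization of a Laurent polynomial in $\Lambda^-_{alg} G^{\C}_\sigma$ yields both factors Laurent polynomial---which is the deepest loop group input; once granted, all remaining steps reduce to bookkeeping of Laurent degrees under multiplication by positive loops.
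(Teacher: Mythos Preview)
The paper does not supply its own proof of this proposition: it is quoted in Appendix~II directly from \cite{DoWa-AT} and stated there without argument, so there is no in-paper proof against which to compare your attempt.

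That said, a couple of remarks on your write-up. In the step $(2)\Rightarrow(3)$ you assert that two extended frames of $\mathcal F$ differ only by a $\lambda$-independent right $K$-multiplication; this is correct for frames normalized at the same base point, but the frame provided by $(2)$ need not satisfy $F(z_0,\bar z_0,\lambda)=e$, so a priori it differs from a based extended frame by an additional \emph{left} multiplication by the constant loop $F(z_0,\bar z_0,\lambda)^{-1}\in\Lambda G_\sigma$. This is harmless---if $F$ is a Laurent polynomial then so is that constant, hence so is the based frame---but you should say so explicitly. In $(5)\Rightarrow(1)$ your reliance on the algebraic Iwasawa theorem (Laurent polynomial input yields Laurent polynomial real factor) is the right ingredient; note, however, that the proposition is stated for $G/K$ which in this paper may be non-compact, and the algebraic Iwasawa statement in \cite{PS} is formulated for compact groups. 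One should either invoke the duality of Appendix~III to reduce to the compact case, or cite the appropriate extension; as written this is a small gap in the justification rather than in the strategy.
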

\begin{remark}
    \begin{enumerate}
        \item   For the case $\tilde{M} = S^2,$  the above   equivalences remain true, if one replaces in the last two statements the word ``holomorphic" by ``meromorphic" and  also admits for all
    frames  at most two singularities.

\item Note, our assumptions imply that the monodromy representation $\rho(\gamma, \lambda)$ is trivial for $\lambda =1$.
For general $\lambda$ the monodromy representation will not be trivial.
    \end{enumerate}
\end{remark}

%%%%%%%%%%%%%%%%%%%%
\subsection{Totally Symmetric Harmonic maps} \label{trivmonorep}

In the last subsection we considered exclusively simply-connected Riemann surfaces $M$.
Obviously then, the monodromy representation of any harmonic map $\mathcal{F}:M \rightarrow G/K$ is trivial. But there also exist non-simply-connected Riemann surfaces which admit harmonic maps
$\mathcal{F}:M \rightarrow G/K$ with  trivial monodromy representation.
It is this type of harmonic maps which we will discuss in this subsection.
\begin{definition}\label{def-uni} (Totally Symmetric Harmonic Maps) Let $M$ be a Riemann surface  and let $G/K$  be an inner symmetric space.
A harmonic map $\mathcal{F}:M\rightarrow G/K$ is said to be \emph{totally symmetric,}  if there exists some frame $F$ of $\mathcal{F}$, defined on the universal cover $\tilde{M}$ of $M$ which has trivial monodromy  for all $\gamma \in \pi_1(M)$, i.e.
\[F(g.z,\overline{g.z},\lambda)=  F(z,\bar{z},\lambda) k(g,z,\bar{z}), \hbox{ for any $g \in \pi_1(M)$ and any $\lambda \in S^1$.}\]
This is equivalent to that
 for all $\lambda \in S^1,$ any extended frame $F(z,\bar{z},\lambda): \tilde{M} \rightarrow (\Lambda  {G _{\sigma})} $ descends to a well defined map on $M$, i.e. $F(z,\bar{z},\lambda): M \rightarrow (\Lambda G_{\sigma})/K,$
 up to two singularities in the case of $M = S^2$.
We would also like to point out that we will use, by abuse of notation,  the same notation for the frame $F$ with values in
$\Lambda G _{\sigma}$ and its projection with values in $ \Lambda G_{\sigma}/K$.
 \end{definition}

 \begin{remark}
  \begin{enumerate}

  \item
  We will say ``a harmonic map is of \emph{finite uniton type}" if and only if the harmonic map is algebraic (property $(U2)$) and totally symmetric (property $(U1)$). By Proposition \ref{typeequivnumber}, this is equivalent to say
``the harmonic map is of \emph{finite uniton number}" in the sense of  \cite{Uh,BuGu}. For more details see \cite{DoWa-AT}.

  \item We sometimes express equivalently ``totally symmetric" by ``with trivial monodromy (representation)".  Condition of being ``totally symmetric" is a very strong condition. Of course, in the case of a non-compact simply connected Riemann surface $M$   {and  {of} $S^2$ respectively,} it is always satisfied.
\end{enumerate}
\end{remark}

%%%%%%%%%%%%%%%%%%%%%

\section{{Appendix III: The Duality Theorem}}

Here we recall how one can relate a harmonic map into a non-compact inner symmetric  space $G/K$ to a  harmonic map into  the compact dual  inner symmetric space of $G/K$. For more information we refer to \cite{DoWa13}.

In \cite{BuGu}, Burstall and Guest have given  for compact inner symmetric spaces $G/K$  an explicit description of those normalized potentials which produce finite uniton type harmonic maps.
To make their work applicable to the non-compact case, we established in \cite{DoWa13} a duality theorem between harmonic maps into non-compact inner symmetric spaces and their compact dual. The most important feature of this result is that the corresponding harmonic maps share the same normalized potential (See the next theorem).
 {We will show  briefly how this duality relation works and what it implies for the properties $(U1)$
and $(U2)$}.

 {Let $G/K$ be a  non-compact inner symmetric space, defined by
 $\sigma$,  and $\tilde{G}$ the (connected) simply-connected cover  of $G$.
Then $G/K = \tilde{G}/\tilde{K}$ for some closed subgroup $\tilde{K}$ of  $\tilde{G}.$ }
 {Let $\tilde{U},$  be a (connected, simply-connected  and semi-simple)  maximal compact Lie subgroup
of $\tilde{G}^{\C}$, the complexification of $\tilde{G}.$}  We  {can also  assume w.l.g. that   $\tilde{G}^{\C}$  is a complex matrix Lie group with Lie subgroup $G$.
  Moreover, we can assume w.l.g. that     $\tilde{U}$   is invariant under the $\C-$linear extension of $\sigma$ to $G^\C$. Finally, by abuse of notation, for $G = SL(2,\R)$ we also denote by $\tilde{G}$ and $\tilde{K}$ the natural image of  $\tilde{G}$ and $\tilde{K}$ in $G^\C$. See, e.g. \cite{Hoch}.}

  We consider the compact dual $\tilde{U}/(\tilde{U}\cap \tilde{K}^{\C})$  of $G/K$ which
 clearly also is defined by $\sigma$ (see \cite{DoWa13} for more details).
Moreover,  $ (\tilde{U}\cap \tilde{K}^{\C})^\C = \tilde{K}^{\C}$ holds
(see {Theorem 1.1 of \cite{DoWa13}}). From this we infer:
\begin{equation}\label{eq-loop-com-n-com}
    \Lambda \tilde{G}^{\C}_{\sigma}=\Lambda \tilde{U}^{\C}_{ {\sigma}},\    \Lambda_*^- \tilde{G}^{\C}_{\sigma}=\Lambda_*^- \tilde{U}^{\C}_{ {\sigma}},\    \Lambda^+\tilde{ G}^{\C}_{\sigma}=\Lambda ^+ \tilde{U}^{\C}_{ {\sigma}}.
\end{equation}
As a consequence, for any extended framing $F(z,\bar{z},\lambda)$ of  $\mathcal{F}: M \to G/K$, the decomposition
 $F(z,\bar{z},\lambda)=F_{\tilde{G,-}} (z,\lambda) F_{\tilde{G},+}(z,\bar{z},\lambda)=F_{\tilde{U},-}(z,\lambda)   F_{\tilde{U},+}(z,\bar{z},\lambda)$ shows
 \begin{equation}\label{eq-norm-framing}
\left\{\begin{split}F_{\tilde{G},-} (z,\lambda)&=F_{\tilde{U},-}(z,\lambda),\\
 \eta&=\lambda^{-1}\eta_{-1}\mathrm{d}z=
(F_{\tilde{G},-})(z,\lambda)^{-1}\mathrm{d}F_{\tilde{G},-}(z,\lambda)=(F_{\tilde{U},-})^{-1}(z,\lambda)\mathrm{d}F_{\tilde{U},-}(z,\lambda).
 \end{split}\right.
 \end{equation}
\begin{theorem} \label{thm-noncompact}$($\cite{DoWa13}$)$
Let
$G/K = \tilde{G} / \tilde{K} $ be a non-compact inner symmetric space with $ \tilde{G} $ simply-connected.
Let $\tilde{U} / (\tilde{U} \cap \tilde{K}^{\C})$ denote the dual compact symmetric space. Then the space
$ \tilde{U} / ( \tilde{U} \cap \tilde{K}^{\C} )$ is inner and  $\Lambda \tilde{G}_{\sigma}^{\C} =
\Lambda \tilde{U}_{\sigma}^{\C} $ holds.
Let $\mathcal{F}:\tilde{M} \rightarrow G/K = \tilde{G} / \tilde{K} $ be a harmonic map, where $ \tilde{M} $
is a simply-connected Riemann surface. Let  {$F(z,\bar{z},\lambda)$ denote an extended frame
of $ \mathcal{F} $. For $F(z,\bar{z},\lambda)$ define
$ F_ {\tilde{U}} $   via the Iwasawa decomposition $F(z,\bar{z},\lambda)=F_{\tilde{U}}(z,\bar{z},\lambda) S_+(z,\bar{z},\lambda)$, with
$F_{\tilde{U}}(z,\bar{z},\lambda)\in \Lambda \tilde{U}_{\sigma}$, $S_+ (z,\bar{z},\lambda)\in \Lambda^+ \tilde{U}_{\sigma}^{\C}$.
Then  \[\hbox{$\mathcal{F}_{\tilde{U}} :\tilde{M} \rightarrow \tilde{U} / \tilde{U} \cap \tilde{K}^{\C}$, \hbox{ with }
$\mathcal{F}_{\tilde{U}} \equiv F_{\tilde{U}}(z,\bar{z},\lambda) \mod \tilde{U} \cap \tilde{K}^{\C}$}\]} is a harmonic map  {for each fixed $\lambda\in S^1$}. Moreover, the harmonic maps $\mathcal{F}$ and $\mathcal{F}_{\tilde{U}}$ have the same normalized potential.
\end{theorem}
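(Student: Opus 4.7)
The plan is to exploit the key identification \eqref{eq-loop-com-n-com}, which says $\Lambda\tilde{G}^{\mathbb{C}}_{\sigma}=\Lambda\tilde{U}^{\mathbb{C}}_{\sigma}$, $\Lambda^{-}_{*}\tilde{G}^{\mathbb{C}}_{\sigma}=\Lambda^{-}_{*}\tilde{U}^{\mathbb{C}}_{\sigma}$ and $\Lambda^{+}\tilde{G}^{\mathbb{C}}_{\sigma}=\Lambda^{+}\tilde{U}^{\mathbb{C}}_{\sigma}$, together with uniqueness of the Birkhoff and Iwasawa decompositions. The extended frame $F(z,\bar{z},\lambda)$ of $\mathcal{F}$ lies a priori in $\Lambda\tilde{G}_{\sigma}\subset\Lambda\tilde{U}^{\mathbb{C}}_{\sigma}$, so one may perform a compact Iwasawa decomposition $F=F_{\tilde{U}}S_{+}$ with $F_{\tilde{U}}\in\Lambda\tilde{U}_{\sigma}$ and $S_{+}\in\Lambda^{+}\tilde{U}^{\mathbb{C}}_{\sigma}$. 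Since $\tilde{U}$ is compact, this decomposition is global on $\tilde{M}$, so $F_{\tilde{U}}$ is defined everywhere and $\mathcal{F}_{\tilde{U}}\equiv F_{\tilde{U}}\mod \tilde{U}\cap\tilde{K}^{\mathbb{C}}$ makes sense as a smooth map on all of $\tilde{M}$, for each fixed $\lambda\in S^{1}$.

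To establish harmonicity of $\mathcal{F}_{\tilde{U}}$, I would compare this $F_{\tilde{U}}$ to the frame produced by the standard DPW reconstruction from the normalized potential. First form the Birkhoff decomposition $F=F_{-}V_{+}$, yielding the normalized potential $\eta=F_{-}^{-1}\mathrm{d}F_{-}=\lambda^{-1}\eta_{-1}\mathrm{d}z$ of $\mathcal{F}$. Next, viewing $F_{-}$ as an element of $\Lambda^{-}_{*}\tilde{U}^{\mathbb{C}}_{\sigma}$, perform a compact Iwasawa decomposition $F_{-}=F'_{\tilde{U}}W_{+}$ with $F'_{\tilde{U}}\in\Lambda\tilde{U}_{\sigma}$ and $W_{+}\in\Lambda^{+}\tilde{U}^{\mathbb{C}}_{\sigma}$. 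Substituting, $F=F_{-}V_{+}=F'_{\tilde{U}}(W_{+}V_{+})$, and since $W_{+}V_{+}\in\Lambda^{+}\tilde{U}^{\mathbb{C}}_{\sigma}$, uniqueness of the compact Iwasawa decomposition forces $F_{\tilde{U}}=F'_{\tilde{U}}$. By the DPW theorem applied to the compact dual $\tilde{U}/(\tilde{U}\cap\tilde{K}^{\mathbb{C}})$, the frame $F'_{\tilde{U}}$ obtained from a $\Lambda^{-}_{*}$-valued meromorphic object with Maurer--Cartan form $\lambda^{-1}\eta_{-1}\mathrm{d}z$ is the extended frame of a harmonic map into the compact dual; hence $\mathcal{F}_{\tilde{U}}$ is harmonic. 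For the equality of normalized potentials, observe that the identity $F_{-}=F_{\tilde{U}}W_{+}$ already is a Birkhoff decomposition of $F_{\tilde{U}}$, since $F_{-}\in\Lambda^{-}_{*}\tilde{U}^{\mathbb{C}}_{\sigma}$ and $W_{+}\in\Lambda^{+}\tilde{U}^{\mathbb{C}}_{\sigma}$. By uniqueness, the normalized potential of $\mathcal{F}_{\tilde{U}}$ is therefore $F_{-}^{-1}\mathrm{d}F_{-}=\eta$, the same as that of $\mathcal{F}$.

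The hard part is not any one computation but rather keeping straight which real form governs each step and tracking the normalizations of the Birkhoff and Iwasawa factors at the base point $z_{0}$ carefully enough that the uniqueness statements can be invoked literally rather than only up to a constant loop. In particular, one should record that $(\tilde{U}\cap\tilde{K}^{\mathbb{C}})^{\mathbb{C}}=\tilde{K}^{\mathbb{C}}$ is exactly what makes $\Lambda^{+}\tilde{G}^{\mathbb{C}}_{\sigma}$ and $\Lambda^{+}\tilde{U}^{\mathbb{C}}_{\sigma}$ coincide, and similarly for $\Lambda^{-}_{*}$, so that the two Birkhoff decompositions truly produce the same $F_{-}$; this is the content of \eqref{eq-loop-com-n-com} and requires the observation that the $\C$-linear extension of $\sigma$ to $\tilde{G}^{\mathbb{C}}$ preserves $\tilde{U}$. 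Once these identifications are in place, the two Iwasawa decompositions are in fact the same decomposition viewed from two different starting points, which is the conceptual content of the theorem.
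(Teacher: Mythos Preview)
Your argument is correct and matches the approach the paper itself sketches. Note that the paper does not actually prove this theorem in full: it is quoted from \cite{DoWa13}, and the text preceding the statement (equations \eqref{eq-loop-com-n-com} and \eqref{eq-norm-framing}) records precisely the two ingredients you use---the identification of the complexified twisted loop groups and their $\Lambda^{\pm}$-subgroups for $\tilde G$ and $\tilde U$, and the resulting equality $F_{\tilde G,-}=F_{\tilde U,-}$ of the Birkhoff factors, which immediately gives the common normalized potential. Your extra step of writing $F_-=F'_{\tilde U}W_+$ and invoking uniqueness of the compact Iwasawa splitting to identify $F'_{\tilde U}$ with $F_{\tilde U}$ (up to a $\tilde U\cap\tilde K^{\mathbb C}$-valued gauge, which is harmless for the projected map $\mathcal F_{\tilde U}$) is the right way to close the loop and deduce harmonicity via the DPW construction on the compact side; this is exactly the mechanism behind \eqref{eq-norm-framing}.
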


As a consequence of Theorem \ref{thm-noncompact} we obtain (see also Theorem 3.6  and Theorem 3.14 of \cite{DoWa13})
\begin{corollary}\label{cor-finite}
Let  {$F(z,\bar{z},\lambda)$ and $F_{\tilde{U}}(z,\bar{z},\lambda)$} be the extended frames defined as above. Then
 \begin{enumerate}
\item
 $F(z,\bar{z},\lambda)$ satisfies $(U1)$ if and only if $F_{\tilde{U}}(z,\bar{z},\lambda)$ satisfies $(U1)$;
\item
  $F(z,\bar{z},\lambda)$ satisfies $(U2)$ if and only if $F_{\tilde{U}}(z,\bar{z},\lambda)$ satisfies $(U2)$.
\end{enumerate}
Therefore $\mathcal{F}$ is of finite uniton number if and only if $\mathcal{F}_{\tilde{U}}$ is of finite uniton number.
\end{corollary}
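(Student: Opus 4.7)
The plan is to reduce both properties to statements about the shared normalized meromorphic frame $F_{-}(z,\lambda) = F_{\tilde{G},-}(z,\lambda) = F_{\tilde{U},-}(z,\lambda)$, which by \eqref{eq-norm-framing} is common to $\mathcal{F}$ and $\mathcal{F}_{\tilde{U}}$. Since the normalized potential $\eta = F_-^{-1}\,\mathrm{d}F_-$ is identical for the two harmonic maps, and since $F_-$ is uniquely determined by $\eta$ together with the initial condition $F_-(z_0,\lambda) = e$, every property of $F$ (resp.\ $F_{\tilde{U}}$) that can be read off from $F_-$ is automatically common to both frames. This is the content I will exploit for both $(U1)$ and $(U2)$.

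For $(U2)$, I would invoke Proposition \ref{prop-fut}: the harmonic map $\mathcal{F}$ (resp.\ $\mathcal{F}_{\tilde{U}}$) is algebraic if and only if its normalized meromorphic extended frame is a Laurent polynomial in $\lambda$. Because $F_{\tilde{G},-} = F_{\tilde{U},-}$, the Laurent-polynomial property of this common object transfers verbatim, so $F$ satisfies $(U2)$ if and only if $F_{\tilde{U}}$ does. Only the compact loop group $\Lambda \tilde{U}_\sigma$ appears in Proposition \ref{prop-fut} as originally stated, but the identities \eqref{eq-loop-com-n-com} show that the Birkhoff and Iwasawa splittings used there are the same in $\Lambda \tilde{G}^{\C}_\sigma$ and in $\Lambda \tilde{U}^{\C}_\sigma$; so the equivalence carries over without modification to the non-compact case.

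For $(U1)$, the strategy is to characterise the totally symmetric property in terms of the descent of $F_-$ to $M$. If $F$ is totally symmetric, then for any $\gamma \in \pi_1(M)$ we have $\gamma^* F = F\cdot k(\gamma,\cdot)$ with $k(\gamma,\cdot) \in \tilde{K}$; substituting $F = F_- V_+$ shows that $F_-(\gamma.z,\lambda)F_-(z,\lambda)^{-1}$ lies in $\Lambda^-\tilde{G}^{\C}_\sigma \cap \Lambda^+\tilde{G}^{\C}_\sigma$, which by the normalisation $F_-(z_0,\lambda) = e$ forces it to be trivial, so $F_-$ descends to $M$. Conversely, if $F_-$ is defined on $M$, then the Iwasawa splittings $F_- = F\, V_+^{-1}$ and $F_- = F_{\tilde{U}}\,S_+^{-1}$ yield frames whose $\pi_1(M)$-ambiguity lies in $\tilde{K}$ respectively $\tilde{U}\cap\tilde{K}^{\C}$, giving back $(U1)$ for $F$ and for $F_{\tilde{U}}$. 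Again by \eqref{eq-norm-framing} and \eqref{eq-loop-com-n-com} the ``descent'' condition on $F_-$ is the same in both settings, so $(U1)$ holds for $F$ if and only if it holds for $F_{\tilde{U}}$.

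The final sentence of the corollary is immediate: by the remark following Definition \ref{def-uni}, the finite uniton type (and hence, via Proposition \ref{typeequivnumber}, the finite uniton number) property is equivalent to the conjunction of $(U1)$ and $(U2)$, each of which has now been shown to be preserved under duality. The only subtle point I anticipate is the non-uniqueness of the Iwasawa splitting in the non-compact case, which may fail to be global; however this is harmless here because one only needs to compare monodromies on the common domain of definition of $F_-$, and the ambiguity is absorbed into the $\tilde{K}$-factor of the totally symmetric condition. I expect this will be the main place where some care is required, but it is handled by the local Iwasawa decomposition argument already used in \cite{DoWa13}.
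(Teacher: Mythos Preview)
Your approach is correct and is essentially what the paper's framework (and the cited reference \cite{DoWa13}) intends: the paper itself does not spell out a proof of this corollary but simply records it as a consequence of Theorem~\ref{thm-noncompact} and refers to \cite{DoWa13}, so your argument via the shared normalized meromorphic frame $F_-$ is exactly the natural filling-in of that citation.

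Two minor technical points you should tighten. First, in your $(U1)$ argument the object that lands in $\Lambda^- \cap \Lambda^+$ is $F_-(z,\lambda)^{-1}F_-(\gamma.z,\lambda)$, not $F_-(\gamma.z,\lambda)F_-(z,\lambda)^{-1}$; only the former is seen to lie in $\Lambda^+$ from the equation $(\gamma^*F_-)(\gamma^*V_+)=F_-V_+k$. Second, what forces this element to be trivial is the normalization $F_-(z,\lambda=\infty)=e$ (i.e.\ $F_-\in\Lambda^-_*\tilde{G}^\C_\sigma$), so that $\Lambda^-_*\cap\Lambda^+=\{e\}$; the base-point condition $F_-(z_0,\lambda)=e$ is not the relevant one here, since $\gamma.z_0\neq z_0$ in general. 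With these two cosmetic fixes your proof goes through.
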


In particular, finding the normalized potentials  of the finite uniton number harmonic maps $\mathcal{F}$ into the
non-compact inner symmetric space $G/K$ means finding the normalized potentials for the corresponding
finite uniton number harmonic maps $\mathcal{F}_{\tilde{U}}$ into the compact dual
${\tilde{U}}/({\tilde{U}}\cap {\tilde{K}}^{\C})$, which can done in terms of the work of \cite{BuGu}, as shown in the above sections.

\ \\

{\bf{Acknowledgements}}
  {PW} was partly supported by the Project 12371052 of NSFC.

{\footnotesize
\def\refname{References}

}
{\small\

 Josef F. Dorfmeister

Fakult\" at f\" ur Mathematik, TU-M\" unchen,

Boltzmann str.3, D-85747, Garching, Germany

{\em E-mail address}: dorfmeis@gmail.de\\

Peng Wang

School of Mathematics and Statistics, FJKLMAA,

Key Laboratory of Analytical Mathematics and Applications (Ministry of Education),

Fujian Normal University, Qishan Campus,

Fuzhou 350117, P. R. China

{\em E-mail address}: {pengwang@fjnu.edu.cn}


\begin{thebibliography}{9}


\bibitem {Bryant1984} Bryant, R. {\em A duality theorem for Willmore surfaces,} J.
Diff. Geom. 20(1984), 23-53.

\bibitem {B-R-S}  Brander, D., Rossman, W.,  Schmitt, N. {\em Holomorphic representation of constant mean curvature surfaces in Minkowski space: Consequences of non-compactness in loop group methods,}
Adv. Math. Vol. 223, No.3, (2010), 949-986.

\bibitem{BuGu} Burstall,  F.E., Guest, M.A., {\em Harmonic two-spheres in compact symmetric spaces, revisited,} Math. Ann. 309 (1997), 541-572.

\bibitem{BR} Burstall, F., Rawnsley, J.H. {\em Twistor theory for Riemannian symmetric spaces: with applications to harmonic maps of Riemann surfaces}. Lecture Notes in Mathematics, Vol. 1424, Springer, Berlin, 1990.

\bibitem{BW} Burstall, F., Wood, J. {\em  The construction of harmonic maps into complex Grassmannians}, J. Diff. Geom. 23 (1986), 255-297.

\bibitem{D:open cells} Dorfmeister, J.
{\em Open Iwasawa Cells and Applications to Surface Theory}
Variational Problems in Differential Geometry,
         London Math.Soc. LN 394, p. 56-67, Cambridge Univ. Press 2012.

\bibitem {Do-Es} Dorfmeister, J., Eschenburg, J.-H.
{\em Pluriharmonic Maps, Loop Groups and Twistor Theory}
Ann. Global Anal. Geom. Vol. 24, No.4, 301-321.

\bibitem {Do-Gr-Sz} Dorfmeister, J., Gradl, H., Szmigielski, J. {\em
Systems of PDEs obtained from factorization in loop groups,} Acta Appl. Math. 53 (1998), 1-58.


\bibitem{DPW} Dorfmeister, J., Pedit, F., Wu, H., {\em Weierstrass type representation of harmonic maps
into symmetric spaces,} Comm. Anal. Geom. 6 (1998), 633-668.

\bibitem{DoWa11}  Dorfmeister, J.,  Wang, P. {\em Weierstrass-Kenmotsu representation of Willmore surfaces in spheres},   {Nagoya Math. J. 244 (2021), 35-59.}


\bibitem{DoWa12} Dorfmeister, J.,  Wang, P. {\em Willmore surfaces in spheres: the DPW approach via the conformal Gauss map.}  Abh. Math. Semin. Univ. Hambg. 89 (2019), no. 1, 77-103.

\bibitem{DoWa13}  Dorfmeister, J.,  Wang, P. {\em A duality theorem for harmonic  maps into non-compact symmetric spaces and compact symmetric spaces,}  submitted, https://arxiv.org/pdf/1903.00885.

\bibitem{DoWa-AT}  Dorfmeister, J.,  Wang, P. {\em Algebraic harmonic maps, totally symmetric  harmonic maps,
and a conjecture}, https://arxiv.org/pdf/2408.12487.



\bibitem{Gu2002}  Guest,  M.A. {\em An update on Harmonic maps of finite uniton number, via the Zero Curvature
Equation,} Integrable Systems, Topology, and Physics: A Conference on Integrable Systems
in Differential Geometry (Contemp. Math., Vol. 309, M. Guest et al., eds.), Amer. Math.
Soc., Providence, R. I. (2002), 85-113.




 \bibitem {Ejiri1988} Ejiri, N. {\em  Willmore surfaces with a duality in $S^{n}(1)$,} Proc. London Math. Soc. (3), 57(2)(1988),
 383-416.
 \bibitem {Esch-Ma-Qu} Eschenburg, J.-H., Mare, A.-L., Quast, P. {\em Pluriharmonic maps into outer symmetric spaces and a subdivision of Weyl chambers,} Bull. Lond. Math. Soc., 2010,  42(6): 1121-1133.
\bibitem{Helein} H\'{e}lein, F.  {\em Willmore immersions and loop groups,}
 J. Differ. Geom., 50, 1998, 331-385.

 {\bibitem{Hoch} Hochschild,G. {\em The structure of Lie groups,} Holden-Day Inc., San Francisco, 1965.}


\bibitem{PS} Pressley, A.N., Segal, G.B. {\em Loop Groups,} Oxford University Press, 1986.


\bibitem{Segal} Segal, G. {\em Loop groups and harmonic maps},  LMS Lecture Notes Ser, 1989, 139: 153-164.

\bibitem{Uh}Uhlenbeck, K. {\em Harmonic maps into Lie groups (classical solutions of the chiral model),} J.
Diff. Geom. 30 (1989), 1-50.


 \bibitem{Wang-1} Wang, P.  {\em Willmore surfaces in spheres via loop groups II: a coarse classification of Willmore two-spheres by potentials}, arXiv:1412.6737.

\bibitem{Wang-2} Wang, P.  {\em Willmore surfaces in spheres via loop groups III:  on minimal surfaces in space forms,  Tohoku Math. J. (2) 69
(2017), no. 1, 141-160.}


\bibitem{Wang-3}Wang, P.  {\em Willmore surfaces in spheres via loop groups IV: on totally isotropic Willmore two-spheres in $S^6$},  Chin. Ann. Math. Ser. B 42 (2021), no. 3, 383-408.




\bibitem{Wu} Wu, H.Y. {\em  A simple way for determining the normalized potentials for harmonic maps,} Ann.
Global Anal. Geom.  17 (1999), 189-199.


\end{thebibliography}
\end{document}